%%%%%%%%%%%%%%%%% update 02 feb 2020
%%%%%%%%%%%%%% idea 29 nov 2019, first latex try 04 dec
%%%%%%%%%%%%%% paper On The Two Phase Fractional Stefan Problem
%%%%%%%%%%%%%%% by del Teso, Endal and Vazquez
%%%%%%%%%%%%%%%%%%%%%%%%%%%%%%%%%%%%%%%%%%%%%%%%%%%%%%%%%%%%%%%%%%%%%%%%%%%%%%%%%%%%%%
\documentclass[11pt, leqno]{amsart}

\usepackage[square, numbers, comma]{natbib}

\usepackage[usenames,dvipsnames,svgnames,table]{xcolor}
\usepackage{amsmath,amsthm,amssymb,amssymb,esint,verbatim,tabularx,graphicx}
\usepackage{fancyhdr}
\usepackage{enumerate}
\usepackage{amsmath}
\usepackage{fancyhdr}
\usepackage{epic}
\usepackage{pgf,tikz}
\usetikzlibrary{arrows}

\usepackage{float}

\usepackage[utf8]{inputenc}
\usepackage{color}
\usepackage{hyperref}
\usepackage{pdfpages}
\usepackage{mathtools}

\hypersetup{urlcolor=blue, colorlinks=true}
\usepackage[inner=3cm,outer=3cm,bottom=4cm,top=4cm]{geometry}

\parskip 4pt
\parindent 4pt

\addtolength{\hoffset}{-.5cm}\addtolength{\textwidth}{0cm}
\addtolength{\voffset}{-.5cm}

\newtheorem{theorem}{Theorem}[section]
\newtheorem{proposition}[theorem]{Proposition}

\newtheorem{definition}[theorem]{Definition}
\newtheorem{lemma}[theorem]{Lemma}
\newtheorem{remark}[theorem]{Remark}

\newcommand{\R}{\ensuremath{\mathbb{R}}}
\newcommand{\ren}{{\mathbb{R}^N}}
\newcommand{\N}{\ensuremath{\mathbb{N}}}

\newcommand{\Z}{\ensuremath{\mathbb{Z}}}
\newcommand{\Levy}{\ensuremath{\mathcal{L}}}

\newcommand{\veps}{\varepsilon}

\newcommand{\FL}{(-\Delta)^{s}}

\newcommand{\dd}{\,\mathrm{d}}

\newcommand{\dell}{\partial}

\newcommand{\pa}{\partial}

\newcommand{\G}{\Gamma}
\newcommand{\Om}{\Omega }

\newcommand{\n }{\nabla }

\newcommand{\xiw}{\xi_\textup{w}}
\newcommand{\xii}{\xi_\textup{i}}

\DeclareMathOperator*{\esslim}{ess\,lim}

\DeclareMathOperator{\supp}{supp}

\numberwithin{equation}{section}

%%%%%%%%%%%%%%%%%%%%%%%%%%%%%%%%%%%%%%%%%%%%%%%%%%%%%%%%%%%%%%%%%%%%%%%%%%%%

\begin{document}

\title[On the two-phase fractional  Stefan problem]{On the two-phase fractional  Stefan problem}

\author[F.~del Teso]{F\'elix del Teso}
\address[F.~del Teso]{Departamento de An\'alisis Matem\'atico y Matem\'atica Aplicada\\
Universidad Complutense de Madrid\\
28040 Madrid, Spain}
\email[]{fdelteso\@@{}ucm.es}
\urladdr{https://sites.google.com/view/felixdelteso}

\author[J.~Endal]{J\o rgen Endal}
\address[J. Endal]{Department of Mathematical Sciences\\
Norwegian University of Science and Technology (NTNU)\\
N-7491 Trondheim, Norway} \email[]{jorgen.endal\@@{}ntnu.no}
\urladdr{http://folk.ntnu.no/jorgeen}

\author[J.~L.~V\'azquez]{Juan Luis V\'azquez}
\address[J.~L.~V\'azquez]{Departamento de Matem\'aticas\\
Universidad Aut\'onoma de Madrid (UAM)\\
28049 Madrid, Spain}
\email[]{juanluis.vazquez\@@{}uam.es}
\urladdr{http://verso.mat.uam.es/~juanluis.vazquez/}

\keywords{Stefan problem, phase transition, long-range interactions, nonlinear and nonlocal equation, fractional diffusion.}

\subjclass[2010]{
80A22, %Stefan problems, phase changes
35D30, %Weak solutions
35K15, %Initial value problems for second-order parabolic equations
35K65, %Degenerate parabolic equations
35R09, % 	Integro-partial differential equations
35R11, %Fractional partial differential equations
%45K05, %Integro-partial differential equations
65M06, %Finite difference methods
65M12. %Stability and convergence of numerical methods
%76S05, %Flows in porous media; filtration; seepage
}

\begin{abstract}
The classical Stefan problem is one of the most studied free boundary problems of evolution type. Recently, there has been interest in treating the corresponding free boundary problem with nonlocal diffusion.

We start the paper by  reviewing the main properties of the classical problem that are of interest for us. Then we introduce the fractional Stefan problem and develop the basic theory. After that we center our attention on selfsimilar solutions, their properties and consequences. We first  discuss the results of the one-phase fractional Stefan problem which have recently been studied by the authors. Finally, we address the theory of the two-phase fractional Stefan problem which contains the main original contributions of this paper. Rigorous numerical studies support our results and claims.
\end{abstract}

\maketitle

{\hskip1cm \sl Dedicated to Laurent V\'eron on his 70th anniversary, avec admiration et amiti\'e}

%%%%%%%%%%%%%%%%%%%%%%%%%%%%%%%%%%%%%%%%%%%%%%%%%%%%%%%%%%%NEW SECTION%%%%%%%%%%%%%%%%%
%%%%%%%%%%%%%%%%%%%%%%%%%%%%%%%%%%%%%%%%%%

\section{Introduction}
\label{sec.intro}

In this paper we will discuss the existence and properties of solutions  for the well-known classical {\sc Stefan problem} and the recently introduced {\sc fractional Stefan problem}. A main feature of such problems is the existence of a moving {\sc free boundary}, which has important physical meaning and centers many of the mathematical difficulties of such problems. For a general presentation of Free Boundary problems  a classical reference is \cite{Friedman1988}. For the classical Stefan problem see Section \ref{sec.csp} below.

The Stefan problems considered here can be encoded in the following general formulation
\begin{equation}\label{eq:Gen}
\partial_t h +\Levy [\Phi(h)]=0 \quad \textup{in} \quad  \R^N\times(0,T),
\end{equation}
where the  diffusion operator  $\Levy$ is chosen as follows:
\begin{equation*}
\begin{split}
&\textup{If $\Levy=-\Delta$, then \eqref{eq:Gen} is called the classical/local Stefan problem.}\\
&\textup{If $\Levy=\FL$ for some $s\in(0,1)$,  then \eqref{eq:Gen} is called the fractional/nonlocal Stefan problem.}
\end{split}
\end{equation*}
There is a further choice consisting of considering both types of problems with one and two phases. More precisely, given a constant $L>0$ ({\sl latent heat}) and $k_0,k_1,k_2>0$ ({\sl thermal conductivities}), we take
\begin{equation}
\label{eq:1-phaseNL}\tag{1-Ph} \Phi(h)=k_0\max\{h-L,0\}
\end{equation}
for the one-phase problem, and
\begin{equation}
\label{eq:2-phaseNL}\tag{2-Ph} \Phi(h)=k_1\max\{h-L,0\}+k_2\min\{h,0\}
\end{equation}
for the two-phase one. In general, $u:=\Phi(h)$ is called the {\sl temperature}, while the original variable $h$ is called the {\sl enthalpy}. These denominations are made for convenience and have no bearing on the mathematical results.

Formulation \eqref{eq:Gen} makes the Stefan problem formally belong to the class of nonlinear degenerate diffusion problems called Generalized Filtration Equations. This class includes the Porous Medium Equation ($\Phi(h):=|h|^{m-1}h$ with $m>1$) and the Fast Diffusion Equation (with $m<1$),
cf. \cite{Vaz07, Vaz17}. Consequently, a part of the abstract theory can be done in common in classes of weak or very weak solutions, both for the standard Laplacian and for the fractional one. However, the strong degeneracy of $\Phi$ in  \eqref{eq:1-phaseNL} and \eqref{eq:2-phaseNL} (see Figure \ref{fig:ComparisonNonlinearities}) in the form of a flat interval makes the solutions of \eqref{eq:Gen} significantly different than the solutions of the Standard or Fractional Porous Medium Equation.

\begin{figure}[h!]
\includegraphics[width=0.9\textwidth]{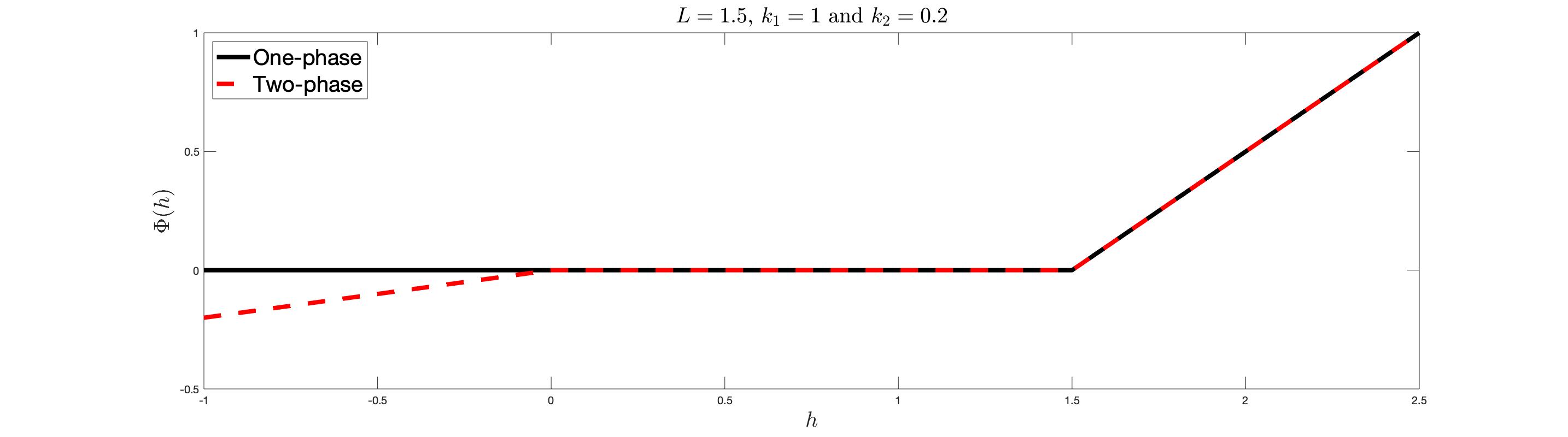}
\vspace{-0.5cm}
\caption{One-phase and two-phase Stefan nonlinearities}
\label{fig:ComparisonNonlinearities}
\end{figure}

The first work on the fractional Stefan problem that we know of is due to Athanasopoulos and Caffarelli in \cite{AtCa10} where it is proved that the temperature $u$ is a continuous function in a general setting that includes both the classical and the fractional cases. This is followed up by \cite{dTEnVa19}, where detailed properties of the selfsimilar solutions, propagation results for the enthalpy and the temperature, rigorous numerical studies as well as other interesting phenomena are established.
Other nonlocal Stefan type models with degeneracies like \eqref{eq:1-phaseNL} and \eqref{eq:2-phaseNL} have also been studied. We mention the recent works \cite{BrChQu12, ChS-G13, CaDuLiLi18, CoQuWo18}, where it is always assumed that $\Levy$ is a zero order integro-differential operator. There are also some models involving fractional derivatives in time, see e.g. \cite{Vol14}.

\textbf{Organization of the paper.} In Section \ref{sec.csp} introduce the classical Stefan Problem from a physical point of view, mainly to fix ideas and notations and also to serve as comparison with results on the fractional case. We will also give classical references to the topic and discuss how to deduce the global formulation \eqref{eq:Gen}.

We address the basic theory of fractional filtration equations in the form of \eqref{eq:Gen} in
Section \ref{sec:commonTheory}. We discuss first the existence, uniqueness and properties of bounded very weak solutions. Later we address the basic properties of a class of bounded selfsimilar solutions. Finally, we present the theory of finite difference numerical schemes.

We devote Section \ref{sec.1phase} to the one-phase fractional Stefan problem, which has been studied in great detail in our paper \cite{dTEnVa19}.  We describe there   the main results obtained in that article.

Section \ref{sec.2phase} contains  the main original contribution of this article, which regards the two-phase fractional Stefan problem. First, we establish useful comparison properties between the one-phase and the two-phase problems. Later, we move to the study of a selfsimilar solution of particular interest. Thus, in Theorem \ref{thm:DiscontinuousSolutionOf2-phase} and Theorem \ref{thm:specialsolatinterph} we construct a solution of the two-phase problem which has a stationary free boundary, a phenomenon that cannot occur in the one-phase problem. Finally, we move to the study of more general bounded selfsimilar solutions. Theorem \ref{thm:UniqueInterphasePoints} establishes the existence of strictly positive interface points bounding the water region $\{h\geq L\}$ and the ice region $\{h\leq0\}$. In particular, this shows the existence of a free boundary.  Theorem \ref{thm:mussyregion} establishes the existence of a nonempty mushy region $\{0< h< L\}$ in the case $s=1/2$. The last mentioned results are highly nontrivial and require original nonlocal techniques.

Rigorous numerical studies support also the existence of a mushy region in
 cases  when $s\not=1/2$. We comment on this fact in Section \ref{sec.numer}.

Section \ref{sec.speed} is devoted to the study of some propagation properties of general solutions. We also support these results with numerical simulations that present interesting phenomena that were not present in the one-phase problem.

Finally, we close the paper with some comments and open problems.

%%%%%%%%%%%%%%%%%%%%%%%%%%%%%%%%%%%%%%%%%%%%%%%%%%%%%%%%%%%%%%%%%%%%%%%%%%%%%%%%
\section{The classical Stefan problem}\label{sec.csp}

The Classical Stefan Problem (CSP) is one of the most famous problems in present-day Applied
Mathematics, and with no doubt the best-known free boundary problem of evolution type.
The mathematical formulation is based on the standard idealization of heat transport in
continuous media plus a careful analysis of heat transmission across the change of phase
region, the typical example being the melting of ice in water. More generally, by a phase we mean a differentiated state of the substance under consideration, characterized by separate values of the relevant parameters. Actually, any number of phases can be present, but there are no main ideas to compensate for the extra complication, so we will always think about two phases, or even one for simplicity (plus the vacuum state).

The CSP is of interest for mathematicians, because it is a simple free boundary problem, easy to solve today when $N = 1$, but still quite basic problems are open for $N > 1$.
It is always interesting for physicists, since  there exist several processes of change of phase which can be reduced to the CSP. Finally, it is of interest for Engineers, since many applied problems can be formulated as CSPs, like the problem of continuous casting of steel, crystal growth, and others.

Though understanding change of phase has been and is still a basic concern, the mathematical problem combines PDEs in the phases plus a complicated geometrical movement of the interphase, and as a consequence, the rigorous theory took a long time to develop. A classical origin of the mathematical story are the papers by J. Stefan who around 1890 proposed the mathematical formulation of the later on called  Stefan Problem also in dimension $N=1$ when modelling a freezing ground problem in polar regions,
\cite{Ste90}. He was motivated by a previous work of Lam\'e and Clapeyron in 1831 \cite{ClLm1831} in a problem about solidification. The existence and uniqueness of a solution was published by Kamin as late as 1961 \cite{Kam61} using the concept of weak solution. Progress was then quick and the theory is now very well documented in papers, surveys, conference proceedings, and in a number of books like \cite{Rubinstein1971}, \cite{Mei1992}, and the very recent monograph  by S. C. Gupta \cite{Gup18}.

\subsection{The Classical Formulation.}
A further assumption which we take for granted in the classical setting is
that the transition region between the two phases  reduces to an
(infinitely thin) surface. It is called the Free Boundary and it is also to be determined as part of the study.

\noindent $\bullet$ With this in mind let us write the basic equations. First, it is
useful to have some notation. We assume that both phases occupy together
a fixed spatial domain $D\subset\ren$, and consider the problem in a time
interval $0\le t\le T$, for some finite or infinite $T$. On the  other
hand, the regions occupied by each of the phases evolve with time, so
the liquid (water in the standard application) will occupy $\Om_1(t)$ and
the solid (ice) $\Om_2(t)$ at time $t$. Clearly, for all $t$
$\Om_1(t)\cup \Om_2(t)= D. $
The initial location of the two phases, $\Om_1(0)$ and  $\Om_2(0)$, is
also known. Let us introduce some domains in space-time:
$Q_T:=D\times (0,T)$, and let
$$
\Om_i=\{(x,t): 0<t<T, \ x\in \Om_i(t)\}.
$$
The energy  balance in the liquid takes the form of the usual linear
heat equation
\begin{equation}
c_1\rho \dell_tu_1=k_1\Delta u_1 \quad  \mbox{\rm in }\ \Om_1, \tag{E1}\label{E1}
\end{equation}
while for the solid we have
\begin{equation}
c_2\rho \dell_tu_2=k_2\Delta u_2 \quad  \mbox{\rm in }\ \Om_2. \tag{E2}\label{E2}
\end{equation}
Here $u_1$ and $u_2$ are the resp. temperatures in the liquid and
solid  regions, while $k_1$ and $k_2$ are the resp. thermal
conductivities,  $c_1$ and $c_2$ the specific heats, and $\rho$ is the density. All of these
parameters are usually  supposed to be constant (just for  the sake of
mathematical simplicity).
It is however quite  natural to assume that they depend on the
temperature but then we have to write $\dell_t(\rho c_i u_i)$ and
$\n\cdot(k_i\n u_i)$. In this paper, we will always consider $\rho=c_i=1$.

\noindent $\bullet$ Next we have to describe what happens at the surface separating
$\Om_1$ and $\Om_2$, i.e., the free boundary, $\Gamma$. A first
condition is the equality of temperatures,
\begin{equation}
u_1=u_2 \qquad  \mbox{\rm on }\ \Gamma. \tag{FB1} \label{FB1}
\end{equation}
This is also an idealization, other conditions have been proposed
to describe more accurately the transition dynamics and are
currently considered in the mathematical research.

We need a further condition to locate the free boundary separating the phases.
In CSP this extra condition on the free boundary is
a  \sl kinematic condition\rm,  describing the movement of the
free boundary based on the \sl energy balance \rm taking place on it,
in  which we have to average the  microscopic processes of change of
phase.  The relevant  physical
concepts are \sl heat flux \rm and \sl latent heat\rm. The
result is as follows:  if $\Psi=  k_2\nabla u_2-k_1\nabla  u_1$ is
the heat flux across $\G$, then
\begin{equation}\label{FB2}
\Psi  \mbox{\rm \ is parallel to the space normal $\bf  n$ to $\G$}\qquad\mbox{
and}  \qquad \Phi=  L\,{\bf  v}\,,                      \tag{FB2}
\end{equation}
$\bf  v$ being the velocity with which the free boundary moves. The
constant $L>0$ is called the \sl latent heat \rm of the phase
transition. In the ice/water model it accounts for the work needed
to break down the crystalline structure of the ice. Relation  \eqref{FB2},
called the  \sl Stefan condition\rm, is not
immediate. It is derived from the  global physical  formulation in the literature.
Equivalently, if $G(x,t)=0$ is the implicit equation for the free  boundary
$\Gamma$ in $(x,t)$-variables,  \eqref{FB2} can be written as $\Psi\cdot \nabla_x G + \rho L \dell_t\Phi=0$.

All things considered,  we have the complete problem as follows:

\noindent {\bf Problem about classical solutions.} Given a smooth domain $D\subset \ren$ and a  $T>0$, we have to:

 \begin{enumerate}[{\rm (i)}]
\item Find a smooth surface $\Gamma \subset Q_T=D\times
(0,T)$ separating two domains in space-time $\Om_1, \Om_2$.

\item Find a function $u_1$ that solves \eqref{E1} in $\Om_1$ and a
function $u_2$ that solves  \eqref{E2} in $\Om_2$ in a classical sense,
Typically we require $u_i\in C^{2,1}_{x,t}$ inside its domain $\Om_i$, $i=1,2$.

\item On $\G$ the free boundary conditions  \eqref{FB1} and  \eqref{FB2} hold.

\item In order to obtain a well-posed problem we add in the
standard way initial conditions
\begin{equation*}%\label{IC}
u_1(x,0)=u_{0,1}(x), \quad x\in \Om_1(0), \qquad u_2(x,0)=u_{0,2}(x), \quad x\in \Om_2(0).
%\quad  \mbox{\rm and }
%\tag{IC}
\end{equation*}

\item  Boundary conditions \rm on the exterior boundary of the
whole domain  $\pa D$ for the time interval under consideration.  These
conditions may be Dirichlet, Neumann, or other type.
\end{enumerate}

The precise details and results can be found in the mentioned literature. Let us remark at this point that it is the Stefan condition  \eqref{FB2} with $L\ne  0$ that mainly characterizes the Stefan problem, and not the possibly different values of $c$, $k$ on both phases.

%%%%%%%%%%%%%%%%%%%%%%%%%%%%%%%%%%%%%%%%%%%%%%%%%%%%%%%%%%%%%%%%%%

\subsection{The one-phase problem.} Special attention is paid to the
simpler case where one of the phases, say the second, is kept at the
critical temperature (e.g., in the water-ice example, the ice is at
$0$ $^{\circ}$C). Then the classical problem simplifies to:

\noindent Finding a subset $\Omega$ of $Q_T$ bounded by a internal surface
$\Gamma=\partial \Omega\cap Q_T $ and a function $u(x,t)\ge 0$ such that
$$\alignedat2
\dell_tu &=k \Delta u \qquad \qquad \qquad &  \mbox{\rm in } \ \Omega\\
u(x,0)&= u_0(x)\ge 0 & \mbox{\rm for } \ \overline\Om\cap \{t=0\}\\
u(x,t)&= g(x,t) &  \mbox{\rm on \ } \pa\Om\cap S \\
u(x,t)&= 0  &   \mbox{\rm on \ } \Gamma
\endalignedat
$$
($S$ is the fixed lateral boundary $\pa \Om\cap (\partial D\times [0,T])$), plus
the Stefan condition
$$
-k\nabla u=  L {\bf v} \qquad  \mbox{\rm on } \Gamma,
$$
where ${\bf v}$ is the normal speed on the advancing free boundary.
The theory for the one-phase problem is much more developed, and
 essentially simpler.

%%%%%%%%%%%%%%%%%%%%%%%%%%%%%%%%%%%%%%%%%%%%%%%%%%%%%%%%%%%%%%%%%%%%%%%%%%%%
\subsection{The global formulation.}
In order to get a global formulation we  re-derive the model from
the \sl general  energy balance \rm plus \sl constitutive relations\rm.
In an arbitrary volume $\Om\subset D$ of material we have
\begin{equation*}
\frac {d}{dt}{\mathcal  Q}(\Omega)=- \Psi (\partial \Omega)+\mathcal  F(\Omega),
\end{equation*}%\label{s1.1}
\noindent where ${\mathcal  Q}(\Omega )=\int_{\Omega }e(x,t)\rho (x,t)\dd x$
is the energy contained in $\Om$ at time $t$, $\Psi(\partial \Omega)=\int_{\partial \Omega} \varphi (x,t)\cdot {\bf  n}\dd S$ is the outcoming energy flux through the boundary $\partial\Om$,
and ${\mathcal  F}(\Omega)=\int_\Omega f(x,t) \dd x$ is the energy created (or spent) inside $\Om$ per unit of time.
Therefore,  $e$ represents an energy density per unit of mass (actually
an  \sl enthalpy\rm). We need to further describe these quantities by
means of constitutive  relations.  One of them  is
\sc  Fourier\rm's law, according to which
\begin{equation*}
\varphi (x,t)=-k\nabla u,
\end{equation*}
where $u(x,t)$ is the temperature and $k>0$ is the heat conductivity, in principle a positive constant.  Thus, we get the global balance law
\begin{equation*}
\dell_t\int_\Omega e\rho \,\dd x=\int_{\partial
\Omega}k\nabla u\cdot {\bf  n} \, \dd S+
\int _\Omega f\, \dd x
\quad \forall \Omega.
\end{equation*}
It is useful at this stage to include $\rho$ into the function $e$ by
defining a new enthalpy per unit volume, $h=\rho e$. Equivalently, we
may  assume that  $\rho=1$.   Using Gauss' formula for the first
integral in the second member we arrive at the equation
\begin{equation*}
\dell_th=\nabla\cdot(k\nabla u)+f.
\end{equation*}
This is the differential form of the global energy balance, usually
called \sl enthalpy-temperature \rm formulation. We
have now two options:
\begin{enumerate}[{\rm (i)}]
\item Either assuming the usual  structural hypothesis on the relations
between $h$, $u$, and $k$, and performing a partial analysis in each
phase, deriving the  equations  \eqref{E1},  \eqref{E2}, plus a free boundary
analysis leading to the free boundary conditions  \eqref{FB1},  \eqref{FB2}, or
\item trying to continue at the global level, avoiding the
splitting into cases. If we take the latter option which allows for a greater
generality and conceptual simplicity.
\end{enumerate}

We will take this latter option, which allows us to  keep a greater
generality and conceptual simplicity.  We only need to
add a \sl structural relation \rm linking $h$ and $u$. This is given by
the two statements:

\begin{enumerate}[{\rm (i)}]
\item $h$ is an increasing $C^1$ function of $u$ in the intervals
$-\infty  < h \le 0$  and  $  L\le h<\infty$.
\item At $u=0$ we have a discontinuity. More precisely,
\begin{equation*}
 \mbox{\rm $h$ jumps from 0 to $  L>0$ at $u=0$.}
\end{equation*}
\end{enumerate}

After some easy manipulations contained in the literature we get the relations stated in the Introduction and the integral formulation in Definition \ref{def:VeryWeak} with $-\FL$ replaced by $\Delta$.
If the space domain $D$ is bounded, we need boundary conditions on the fixed external boundary of $D$. We see immediately that this is an  \sl implicit formulation \rm where
the free boundary does not appear in the definition of solution.

%%%%%%%%%%%%%%%%%%%%%%%%%%%%%%%%%%%%%%%%%%%%%%%%%%%%%%%%%%

%%%%%%%%%%%%%%%%%%%%%%%%%%%%%%%%%%%%%%%%%%%%%%%%%%%%%%%%%%%%%%%%%%%%%%%%%%%%%%%
\section{Common theory for nonlinear fractional problems}\label{sec:commonTheory}

The theory of well-posedness and basic properties for fractional Stefan problems can be seen as a part of a more general class of problems that we call Generalized Fractional Filtration Equations (see \cite{Vaz07} for the local counterpart). More precisely, one can consider the equation
\begin{equation}\label{eq:FGPME}
\partial_t h+\FL \Phi(h)=0 \quad \textup{in} \quad Q_T:=\R^N\times(0,T),
\end{equation}
for $s\in(0,1)$, $N\geq1$, and
\begin{equation}\label{eq:Phias}\tag{A$_\Phi$}
\Phi:\R\to\R \quad \textup{nondecreasing and locally Lipschitz}.
\end{equation}
Together with \eqref{eq:FGPME} one needs to prescribe an initial condition $h(\cdot,0)=h_0$.

\begin{remark}
Throughout, we always assume $s\in(0,1)$, $N\geq1$ unless otherwise stated. For mathematical simplicity, we also assume $k_0,k_1,k_2=1$ in \eqref{eq:1-phaseNL} and \eqref{eq:2-phaseNL}.
\end{remark}

Our theory is developed in the context of bounded very weak (or distributional) solutions. More precisely:

\begin{definition}[Very weak solution]\label{def:VeryWeak}
Assume \eqref{eq:Phias}. We say that $h\in L^\infty(Q_T)$ is a very weak solution of \eqref{eq:FGPME} with initial condition $h_0\in L^\infty(\R^N)$ if for all $\psi\in C_{\textup{c}}^\infty (\R^N\times[0,T))$,
\begin{equation}\label{defeq:distSolFL}
\int_0^T \int_{\R^N} \big(h(x,t) \partial_t \psi(x,t) - \Phi(h(x,t))\FL \psi(x,t)\big)\dd x \dd t +\int_{\R^N} h_0(x) \psi(x,0)\dd x=0.
\end{equation}
\end{definition}

\begin{remark}\label{remark:VeryWeak}
An equivalent alternative for \eqref{defeq:distSolFL} is $\dell_th+\FL \Phi(h)=0$ in $\mathcal{D}'(\R^N\times(0,T))$
and
\[
\esslim_{t\to0^+} \int_{\R^N} h(x,t)\psi(x,t)\dd x= \int_{\R^N} h_0(x) \psi(x,0)\dd x \quad \textup{for all} \quad \psi\in C_{\textup{c}}^\infty (\R^N \times [0,T)).
\]
\end{remark}

\subsection{Well-posedness and basic properties}
The following result ensures existence and uniqueness (see \cite{GrMuPu19}).

\begin{theorem}\label{thm:ext-uniq}
Assume \eqref{eq:Phias}. Given the initial data $h_0\in L^\infty(\R^N)$, there exists a unique very weak solution $h\in L^\infty(Q_T)$ of \eqref{eq:FGPME}.
\end{theorem}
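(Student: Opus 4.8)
The plan is to obtain existence by a double approximation — regularizing the nonlinearity so that it becomes nondegenerate, and mollifying the data — combined with uniform a priori bounds and a compactness argument, and to obtain uniqueness through a duality argument (in the spirit of Holmgren and of Brezis--Crandall) that exploits the monotonicity of $\Phi$. The two parts are essentially independent, and I expect the uniqueness step to be the real difficulty, because the solutions are only assumed bounded (not integrable) and $\Phi$ in \eqref{eq:Phias} may be flat on whole intervals, so the underlying operator is strongly degenerate.

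For existence, I would first replace $\Phi$ by $\Phi_n:=\Phi+\tfrac1n\,\mathrm{id}$, which is strictly increasing and locally Lipschitz, and replace $h_0$ by smooth bounded approximations $h_{0,n}\to h_0$. For each fixed $n$ the equation $\partial_t h_n+\FL\Phi_n(h_n)=0$ is nondegenerate (since $\Phi_n'\geq 1/n>0$) and is solvable by standard nonlocal parabolic theory, producing a strong solution. The key is then to derive estimates independent of $n$: (i) an $L^\infty$ bound $\|h_n(\cdot,t)\|_{L^\infty}\le\|h_{0,n}\|_{L^\infty}$ from the comparison principle, using that constants solve \eqref{eq:FGPME}; (ii) order preservation and an $L^1$-type contraction for differences of solutions, which — by the translation invariance of both $\FL$ and the equation — yields a spatial modulus of continuity $\|h_n(\cdot+z,t)-h_n(\cdot,t)\|_{L^1_{\mathrm{loc}}}\le\omega(|z|)$; and (iii) a time-translation estimate in $L^1_{\mathrm{loc}}$ deduced from the equation together with (ii). Estimates (ii)--(iii) give compactness in $L^1_{\mathrm{loc}}(Q_T)$ via the Kolmogorov--Riesz criterion, so along a subsequence $h_n\to h$ a.e.; since $\Phi$ is continuous and everything is uniformly bounded, $\Phi_n(h_n)\to\Phi(h)$ by dominated convergence, and one passes to the limit in \eqref{defeq:distSolFL}, the factor $\FL\psi$ being a fixed admissible test function.

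For uniqueness, let $h_1,h_2$ be two bounded very weak solutions with the same datum, set $w:=h_1-h_2$ and $U:=\Phi(h_1)-\Phi(h_2)$. Monotonicity of $\Phi$ gives $U=a\,w$ with $0\le a(x,t)\le L_\Phi$ (the local Lipschitz constant on the relevant range), so $w$ is a bounded distributional solution of the \emph{linear} nonlocal equation $\partial_t w+\FL(a\,w)=0$ with zero initial trace. The idea is to test this against the solution $\varphi$ of a suitable backward dual problem $-\partial_t\varphi+a\,\FL\varphi=g$ with $\varphi(\cdot,T)=0$, which formally yields $\int_0^T\!\!\int_{\RN} w\,g\,\dd x\,\dd t=0$ and hence $w\equiv0$ for a dense class of $g$. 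Making this rigorous is where the obstacles concentrate: the coefficient $a$ is merely bounded and \emph{nonnegative}, so it can vanish and the dual problem is itself degenerate. I would therefore regularize $a$ (mollify it and add $\delta>0$), solve the resulting nondegenerate dual problem, control the error terms using the uniform $L^\infty$ bound and the sign of $a$, and finally let $\delta\to0$.

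The second, and I think principal, difficulty is that $h_1,h_2$ and hence $w$ are only in $L^\infty$, so the pairings above are not a priori integrable and the nonlocal operator $\FL$ produces slowly decaying tails that compactly supported test functions do not control. To handle this I would run the duality against weights with polynomial decay $(1+|x|^2)^{-\beta}$ whose exponent is tuned to the order $2s$ of $\FL$ (so that $\FL$ of the weight is integrable and of lower order), carefully estimating and absorbing the tail contributions. Equivalently, one may first establish the claim for $L^1\cap L^\infty$ data, where both the contraction and the duality argument are cleanest, and then remove the integrability assumption by this weighted-tail analysis, exploiting the algebraic decay of the kernel of $\FL$. Controlling these nonlocal tails for merely bounded solutions under a degenerate $\Phi$ is the crux of the whole argument.
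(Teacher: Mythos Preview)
The paper does not actually give a proof here: it simply cites \cite{GrMuPu19} for the full statement, and remarks that existence can alternatively be obtained as a \emph{monotone} limit of $L^1\cap L^\infty$ very weak solutions (Appendix~A of \cite{dTEnVa19}). So there is no detailed argument in the paper to compare against, only a route.

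Your uniqueness strategy --- duality/Holmgren with a regularized coefficient, plus weighted tail control to compensate for the fact that the solutions are only $L^\infty$ --- is indeed the approach of \cite{GrMuPu19}, and your identification of the real obstacle (nonlocal tails of $\FL$ against merely bounded $w$, combined with degeneracy of $a$) is accurate. That part of the sketch is correct in spirit.

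Your existence argument, however, has a genuine gap. The Kolmogorov--Riesz step relies on a \emph{uniform} (in $n$) modulus
\[
\|h_n(\cdot+z,t)-h_n(\cdot,t)\|_{L^1_{\mathrm{loc}}}\le \omega(|z|),
\]
which you derive from the $L^1$-contraction applied to translates. But the $L^1$-contraction controls $\|h_n(\cdot+z,t)-h_n(\cdot,t)\|_{L^1(\R^N)}$ by $\|h_{0,n}(\cdot+z)-h_{0,n}\|_{L^1(\R^N)}$, and for merely bounded data $h_0\in L^\infty(\R^N)$ the right-hand side is in general \emph{not finite}, let alone uniformly small in $n$: mollifying $h_0$ does not put $h_{0,n}(\cdot+z)-h_{0,n}$ into $L^1$, and truncating to $B_n$ makes the $L^1$-norm of this difference blow up like $|z|\,n^{N-1}$. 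So no uniform equicontinuity, hence no compactness, is available this way for general $L^\infty$ data. Your compactness scheme does work if $h_0\in L^1\cap L^\infty$ (then translation differences of $h_0$ are in $L^1$ with a genuine modulus), and that is exactly how the paper's route proceeds: first build solutions for $L^1\cap L^\infty$ data, then approximate an arbitrary $h_0\in L^\infty$ \emph{monotonically} by $L^1\cap L^\infty$ data (e.g.\ $h_{0}\,\chi_{B_n}$ after splitting signs), use comparison to get a monotone sequence of solutions bounded in $L^\infty$, and pass to the limit by monotone/dominated convergence rather than by compactness. Once uniqueness is in hand, the limit is unambiguous.
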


We will also need some extra properties of the solution. For that purpose, we rely on the argument present in Appendix A in \cite{dTEnVa19} where bounded very weak solutions are obtained as a monotone limit of $L^1\cap L^\infty$ very weak solutions. The general theory for the latter comes from \cite{dTEnJa19,DTEnJa18b}. See also \cite{DPQuRoVa11, DPQuRoVa12} for the theory in the context of weak energy solutions.

\begin{theorem}\label{thm:genproperties}
Assume \eqref{eq:Phias}. Let  $h_1,h_2\in L^\infty(Q_T)$ be the very weak solutions of \eqref{eq:FGPME} with respective initial data $h_{0,1},h_{0,2}\in L^\infty(\R^N)$. Then

\begin{enumerate}[{\rm (a)}]
\item \label{thm:genproperties-item1}  \textup{(Comparison)} If $h_{0,1}\leq h_{0,2}$ a.e. in $\R^N$, then $h_1 \leq h_2$ a.e. in $Q_T$.
\item \label{thm:genproperties-item2} \textup{($L^\infty$-stability)} $\|h_1(\cdot,t)\|_{L^\infty(\R^N)}\leq \|h_{0,1}\|_{L^\infty(\R^N)}\, $ for a.e. $t\in(0,T)$.
\item\label{thm:genproperties-item3} \textup{$(L^1$-contraction)}  If $(h_{0,1}-h_{0,2})^+\in L^1(\R^N)$, then
\[
\int_{\R^N}(h_1(x,t)-h_2(x,t))^+\dd x\leq \int_{\R^N}(h_{0,1}(x)-h_{0,2}(x))^+\dd x \qquad\textup{for a.e.}\qquad t\in(0,T).
\]
\item\label{thm:genproperties-item4} \textup{(Conservation of mass)} If $h_0\in L^1(\R^N)$, then
\[
\int_{\R^N} h_1(x,t)\dd x=\int_{\R^N} h_{0,1}(x)\dd x \quad \textup{for a.e.} \quad t\in(0,T).
\]

\item\label{thm:genproperties-item5}\textup{($L^1$-regularity)} If $\|h_{0,1}(\cdot+\xi)-h_{0,1}\|_{L^1(\R^N)}\to 0$ as $|\xi|\to 0^+$, then $h\in C([0,T]:L^1_{\textup{loc}}(\R^N))$.
\end{enumerate}
\end{theorem}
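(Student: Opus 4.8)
The plan is to prove each property first for solutions with data in $L^1\cap L^\infty$, where the well-posedness and contraction theory of \cite{dTEnJa19,DTEnJa18b} is available, and then to transfer the statements to merely bounded data through the monotone approximation of Appendix~A in \cite{dTEnVa19}. The master property is the $L^1$-contraction (c). In the $L^1\cap L^\infty$ class it is obtained by a Kruzhkov-type doubling of variables adapted to $\FL$: one writes $\FL$ through its singular-integral (Lévy) representation with nonnegative kernel, tests the equations for $h_1$ and $h_2$ against a smooth approximation of $\sgn^+(h_1-h_2)$, and uses the monotonicity of $\Phi$ together with the sign of the kernel so that the nonlocal cross terms carry the correct sign and drop out in the limit, leaving precisely the contraction inequality. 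Since $\FL$ is purely nonlocal (no local second-order part), distributional and entropy solutions coincide, so the contraction holds directly for very weak solutions.

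Granting (c), the Comparison property (a) is immediate: if $h_{0,1}\le h_{0,2}$ a.e., then $(h_{0,1}-h_{0,2})^+=0$, so the right-hand side of the contraction vanishes and $\int_{\RN}(h_1-h_2)^+\dd x=0$ for a.e.\ $t$, i.e.\ $h_1\le h_2$. For the $L^\infty$-stability (b) I would observe that every constant $M$ is a very weak solution, since $\partial_t M=0$, $\Phi(M)$ is constant, and $\int_{\RN}\FL\psi\,\dd x=0$ for all $\psi\in C_{\textup c}^\infty$ (because $\widehat{\FL\psi}(0)=0$); comparing $h_1$ with the constants $\pm\|h_{0,1}\|_{L^\infty(\RN)}$ via (a) then yields the stated bound.

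For the Conservation of mass (d), with $h_0\in L^1\cap L^\infty$ one tests \eqref{defeq:distSolFL} with $\psi(x,t)=\xi_R(x)\eta(t)$, where $\eta\in C_{\textup c}^\infty([0,T))$ and $\xi_R(x)=\xi(x/R)$ is a smooth cutoff equal to $1$ on $B_R$. Writing $\Phi(h)=(\Phi(h)-\Phi(0))+\Phi(0)$, the constant part drops because $\int_{\RN}\FL\xi_R\,\dd x=0$, while the integrable part is controlled by $\|\Phi(h)-\Phi(0)\|_{L^1(\RN)}\,\|\FL\xi_R\|_{L^\infty(\RN)}$ with $\|\FL\xi_R\|_{L^\infty(\RN)}=O(R^{-2s})\to0$; letting $R\to\infty$ kills the diffusion term and leaves only the time derivative of the total mass. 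Property (e) follows from combining the translation invariance of the equation with (c): applying the contraction to $h$ and its spatial translate $h(\cdot+\xi,\cdot)$ gives $\|h(\cdot+\xi,t)-h(\cdot,t)\|_{L^1(\RN)}\le\|h_0(\cdot+\xi)-h_0\|_{L^1(\RN)}$, which is uniform-in-$t$ equicontinuity in space; feeding this back into the equation upgrades it to continuity in time with values in $L^1_{\textup{loc}}(\RN)$.

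I expect the real work to lie in the limiting procedure rather than in the $L^1\cap L^\infty$ statements. One must select a monotone family of $L^1\cap L^\infty$ initial data increasing (or decreasing) to $h_0$, show that the associated solutions converge monotonically, hence a.e.\ and in $L^1_{\textup{loc}}(\RN)$, to the unique bounded very weak solution of Theorem~\ref{thm:ext-uniq}, and verify that each property survives the passage. Comparison and the contraction are stable under a.e.\ convergence by Fatou, but Conservation of mass and the $L^1$-continuity require uniform-in-$n$ tightness to rule out loss of mass at spatial infinity; securing this equi-integrability, uniformly in the approximation parameter, is the delicate point of the argument.
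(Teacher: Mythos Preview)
Your proposal is correct and follows precisely the route the paper indicates: the paper does not give an explicit proof of this theorem but refers the reader to the $L^1\cap L^\infty$ theory of \cite{dTEnJa19,DTEnJa18b} combined with the monotone approximation procedure of Appendix~A in \cite{dTEnVa19}, which is exactly the scheme you describe. Your outline is in fact more detailed than the paper's own treatment, and your identification of the equi-integrability in the limiting step as the delicate point is accurate.
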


Additionally, the results of \cite{AtCa10} ensure that for fractional Stefan problems, the temperature is a continuous function. We refer to Appendix A in \cite{dTEnVa19} for an explanation of how the result \cite{AtCa10} is applied to our concept of solutions.

\begin{theorem}[Continuity of temperature]\label{thm:regularitygen}
Assume $\Phi$ satisfy either \eqref{eq:1-phaseNL} or \eqref{eq:2-phaseNL}.  Let  $h\in L^\infty(Q_T)$ be the very weak solution of \eqref{eq:FGPME} with initial data $h_0\in L^\infty(\R^N)$. Then $\Phi(h)\in C(Q_T)$ with a uniform modulus of continuity  for $t\geq \tau>0$.  Additionally, if $\Phi(h_0)\in C_{\textup{b}}( \Omega)$ for some open set $\Omega\subset \R^N$, then $\Phi(h)\in C_{\textup{b}}(\Omega\times[0,T))$.
\end{theorem}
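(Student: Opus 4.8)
The plan is to reduce the statement to the continuity theory of Athanasopoulos and Caffarelli \cite{AtCa10}, using the approximation structure that underlies our notion of very weak solution rather than attempting to redo the regularity analysis. Recall from the construction in Appendix A of \cite{dTEnVa19} that the bounded very weak solution $h$ is realized as a monotone, hence a.e., limit of a sequence $h_n$ of $L^1\cap L^\infty$ very weak solutions of \eqref{eq:FGPME} with the \emph{same} Stefan nonlinearity $\Phi$ and truncated initial data $h_{0,n}$, all enjoying the common bound $\|h_n\|_{L^\infty(Q_T)}\le\|h_0\|_{L^\infty(\RN)}$ furnished by the $L^\infty$-stability in Theorem \ref{thm:genproperties}. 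Since both \eqref{eq:1-phaseNL} and \eqref{eq:2-phaseNL} are nondecreasing with a single flat interval, each pair $(h_n,\Phi(h_n))$ falls within the class of nonlocal Stefan-type problems to which \cite{AtCa10} applies.

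First I would apply the interior continuity result of \cite{AtCa10} to each temperature $\Phi(h_n)$. The crucial point to extract is that the modulus of continuity it provides is \emph{uniform}: it depends only on $s$, $N$, the structure of $\Phi$, and the common $L^\infty$ bound, but not on the truncation index $n$, and it is valid on $\RN\times[\tau,T)$ for every $\tau>0$. Thus $\{\Phi(h_n)\}_n$ is an equicontinuous family on each such strip. Since $\Phi$ is continuous and $h_n\to h$ a.e., we also have $\Phi(h_n)\to\Phi(h)$ a.e., and an Arzel\`a--Ascoli argument upgrades this to local uniform convergence on $\RN\times[\tau,T)$, so that $\Phi(h)$ admits a continuous representative there carrying the same modulus. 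Letting $\tau\to0^+$ yields $\Phi(h)\in C(Q_T)$ with the claimed uniform modulus for $t\ge\tau>0$.

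For the second assertion I would instead invoke the initial-trace (boundary) part of the regularity theory in \cite{AtCa10}: when the initial temperature $\Phi(h_0)$ is continuous and bounded on an open set $\Omega$, continuity of $\Phi(h)$ propagates up to the initial time on $\Omega\times[0,T)$. At the level of the approximations this again produces a modulus that is uniform in $n$ and valid up to $t=0$ on compact subsets of $\Omega$; passing to the limit exactly as above gives $\Phi(h)\in C_{\textup{b}}(\Omega\times[0,T))$.

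The main obstacle is \emph{not} the De Giorgi--type iteration behind \cite{AtCa10}, which I would quote as a black box, but rather the reconciliation of solution concepts together with the uniformity of the modulus under the monotone limit: one must verify that the $L^1\cap L^\infty$ solutions $h_n$ genuinely satisfy the hypotheses under which \cite{AtCa10} delivers an $n$-independent estimate, so that equicontinuity survives the passage $n\to\infty$. This compatibility is precisely what is worked out in Appendix A of \cite{dTEnVa19}; once it is granted, the continuity of $\Phi(h)$ follows from the stability of an equicontinuous family under a.e. convergence.
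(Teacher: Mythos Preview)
Your proposal is correct and follows essentially the same route as the paper: the authors do not give an in-text proof but simply invoke \cite{AtCa10} and refer to Appendix~A of \cite{dTEnVa19} for the compatibility of solution concepts, which is precisely the monotone-approximation-plus-uniform-modulus argument you outline. Your additional explicit mention of the Arzel\`a--Ascoli step to pass the equicontinuity through the a.e.\ limit is a fair rendering of what that appendix accomplishes.
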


\subsection{Bounded selfsimilar solutions}
The family of equations encoded in \eqref{eq:FGPME} admits a class of selfsimilar solutions of the form
\[
h(x,t)=H(xt^{-\frac{1}{2s}}).
\]
for any initial data satisfying $h_0(ax)=h_0(x)$ for all $a>0$ and all $x\in\R^N$. It is standard to check the following result, and we refer the reader to \cite{dTEnVa19} for details.

\begin{theorem}\label{thm:ssprofile}
Assume \eqref{eq:Phias}.  Let  $h\in L^\infty(Q_T)$ be the very weak solution of \eqref{eq:FGPME} with initial data $h_0\in L^\infty(\R^N)$ such that $h_0(ax)=h_0(x)$ for all $a>0$ and all $x\in \R^N$. Then $h$ is selfsimilar of the form
\[
h(x,t)=H(xt^{-\frac{1}{2s}}),
\]
where the selfsimilar profile $H$ satisfies the stationary equation
\begin{equation}\label{eq:SSE}\tag{SSS}
-\frac{1}{2s} \xi \cdot \nabla H(\xi)+ \FL \Phi(H)(\xi)=0 \quad \textup{in} \quad  \mathcal{D}'(\R^N).
\end{equation}
\end{theorem}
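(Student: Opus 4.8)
The plan is to exploit the scaling invariance of equation \eqref{eq:FGPME} together with the uniqueness statement of Theorem \ref{thm:ext-uniq}. For $\lambda>0$ consider the parabolic dilation $D_\lambda(x,t)=(\lambda x,\lambda^{2s}t)$ and set $h_\lambda:=h\circ D_\lambda$, that is $h_\lambda(x,t)=h(\lambda x,\lambda^{2s}t)$. Using the homogeneity $\FL[f(\lambda\,\cdot)]=\lambda^{2s}(\FL f)(\lambda\,\cdot)$ and the chain rule in $t$, one gets $\partial_t h_\lambda+\FL\Phi(h_\lambda)=\lambda^{2s}\big[\partial_t h+\FL\Phi(h)\big]\circ D_\lambda=0$; rigorously this is checked at the level of the test-function identity \eqref{defeq:distSolFL} by the change of variables $x\mapsto\lambda^{-1}x$, $t\mapsto\lambda^{-2s}t$ (which maps $C_{\textup{c}}^\infty$ test functions to $C_{\textup{c}}^\infty$ test functions), so $h_\lambda$ is again a very weak solution. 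Moreover the assumed scale invariance of the data gives $h_\lambda(\cdot,0)=h_0(\lambda\,\cdot)=h_0$. Hence $h$ and $h_\lambda$ solve \eqref{eq:FGPME} with the very same initial datum, and Theorem \ref{thm:ext-uniq} forces $h_\lambda=h$ a.e.\ in $Q_T$ for every $\lambda>0$.

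Before iterating in $\lambda$ one must note that $D_\lambda$ can push the time variable beyond the horizon $T$. I would deal with this by first observing that, by uniqueness, the solution for a larger horizon restricts to the solution for a smaller one, so there is a single global-in-time solution on $\R^N\times(0,\infty)$ to which the scaling argument applies; it then suffices to prove the representation there. The map $(x,t)\mapsto(\xi,t)$ with $\xi=xt^{-\frac{1}{2s}}$ is a smooth diffeomorphism of $\R^N\times(0,\infty)$, hence preserves null sets, so setting $g(\xi,t):=h(\xi t^{\frac{1}{2s}},t)$ transports all a.e.\ identities. A direct substitution turns the invariance $h(\lambda x,\lambda^{2s}t)=h(x,t)$ into $g(\xi,\lambda^{2s}t)=g(\xi,t)$ for a.e.\ $(\xi,t)$ and each $\lambda>0$: the function $g$ is invariant under all time dilations. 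Defining $H(\xi):=g(\xi,1)=h(\xi,1)$ then gives $h(x,t)=H(xt^{-\frac{1}{2s}})$, the claimed profile.

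The one genuinely delicate point — and the step I expect to be the main obstacle — is upgrading ``$h_\lambda=h$ a.e.\ for each fixed $\lambda$'' (with a $\lambda$-dependent exceptional set) to the a.e.\ selfsimilar representation; one cannot simply substitute $\lambda=t^{-\frac{1}{2s}}$ because that value depends on $t$. I would resolve this by Fubini/Tonelli: the measurable function $(\xi,t,\lambda)\mapsto g(\xi,\lambda^{2s}t)-g(\xi,t)$ vanishes a.e.\ in $(\xi,t)$ for each $\lambda$, hence vanishes a.e.\ on $\R^N\times(0,\infty)\times(0,\infty)$, so for a.e.\ $\xi$ the map $\phi(t):=g(\xi,t)$ satisfies $\phi(\lambda^{2s}t)=\phi(t)$ for a.e.\ $(t,\lambda)$. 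After the change $s=\lambda^{2s}$ and the logarithmic substitution $\tau=\log t$ this says that $\psi(\tau):=\phi(e^{\tau})$ obeys $\psi(\tau+\sigma)=\psi(\tau)$ for a.e.\ $(\tau,\sigma)$, and a measurable function with this property is a.e.\ constant (pick a good $\tau_0$; then $\psi$ equals $\psi(\tau_0)$ for a.e.\ argument). Thus $t\mapsto g(\xi,t)$ is a.e.\ equal to $H(\xi)$, as required. (When $\Phi$ is one of the Stefan nonlinearities \eqref{eq:1-phaseNL}, \eqref{eq:2-phaseNL}, the continuity of $\Phi(h)$ from Theorem \ref{thm:regularitygen} would let one run this last step pointwise for $u=\Phi(h)$; but here only \eqref{eq:Phias} is available, so the measure-theoretic argument is the right tool.)

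Finally I would derive the stationary equation \eqref{eq:SSE}. Formally, with $\xi=xt^{-\frac{1}{2s}}$ one has $\partial_t h(x,t)=-\tfrac{1}{2s}\,t^{-1}\,\xi\cdot\nabla H(\xi)$, while the $x$-homogeneity of $\FL$ (with scaling factor $t^{-1}$) gives $\FL\Phi(h)(x,t)=t^{-1}(\FL\Phi(H))(\xi)$; adding and multiplying by $t>0$ yields exactly \eqref{eq:SSE}. Rigorously this comes out of inserting $h(x,t)=H(xt^{-\frac{1}{2s}})$ into \eqref{defeq:distSolFL}, testing with separated functions $\psi(x,t)=\eta(t)\varphi(x)$, changing variables $x\mapsto\xi$ at each fixed $t$, and integrating the resulting expression in $t$ before letting $\eta$ exhaust the time axis; this produces the distributional identity \eqref{eq:SSE} for $H$. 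I expect no difficulty here beyond routine bookkeeping, since $H\in L^\infty(\R^N)$ and $\FL$ acts in the distributional sense on bounded functions.
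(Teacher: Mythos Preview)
Your proposal is correct and follows the standard route the paper has in mind: the paper itself does not supply a proof of Theorem~\ref{thm:ssprofile} but simply says ``It is standard to check the following result, and we refer the reader to \cite{dTEnVa19} for details.'' The argument there is precisely the scaling--plus--uniqueness one you outline: define $h_\lambda(x,t)=h(\lambda x,\lambda^{2s}t)$, observe that $h_\lambda$ is again a bounded very weak solution with the same (scale-invariant) initial datum, invoke Theorem~\ref{thm:ext-uniq}, and read off the selfsimilar form and the profile equation. Your handling of the time-horizon issue (extend to a global solution by uniqueness) and of the $\lambda$-dependent null sets (Fubini/Tonelli to pass from ``for each $\lambda$, a.e.\ in $(x,t)$'' to ``a.e.\ in $(x,t,\lambda)$'') is exactly the care needed in the merely-$L^\infty$ setting, and your distributional derivation of \eqref{eq:SSE} is the right way to close.

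One minor comment: you set $H(\xi):=g(\xi,1)=h(\xi,1)$ before running the Fubini step, but under the bare hypothesis \eqref{eq:Phias} the time slice $t=1$ carries no a priori meaning. Your own argument in the next paragraph actually fixes this --- it produces, for a.e.\ $\xi$, a well-defined constant value of $t\mapsto g(\xi,t)$, and that constant is what one should call $H(\xi)$. In the paper's applications (Theorem~\ref{thm:ssprofileprop} and onwards) the identification $H(\xi)=h(\xi,1)$ is used freely, but there the extra $L^1_{\textup{loc}}$-in-time continuity of Theorem~\ref{thm:genproperties}\eqref{thm:genproperties-item5} is available for the specific data considered, which legitimises pointwise-in-time evaluation.
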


When $N=1$, we can choose a more specific initial data that will lead to a more specific selfsimilar solution from which we will be able to prove several properties for the general solution of \eqref{eq:FGPME}. Indeed, we have the following Theorem which is new in the general context we are treating.

\begin{theorem}\label{thm:ssprofileprop}
Under the assumptions of Theorem \ref{thm:ssprofile}, and additionally, that $N=1$, and that for some $b_1,b_2\in\R$,
\begin{equation*}
h_0(x):=\begin{cases}
b_1  \quad\quad&\textup{if}\quad   x\leq0,\\
b_2  \quad\quad&\textup{if}\quad   x>0.
\end{cases}
\end{equation*}
Then the corresponding solution $h\in L^\infty(Q_T)$ is selfsimilar as in Theorem \ref{thm:ssprofile}.  Moreover, it has the following properties:
\begin{enumerate}[\rm (a)]
\item\label{thm:ssprofileprop-item1} \textup{(Monotonicity)} If $b_1 \geq b_2$ then $H$ is nonincreasing while if $b_1 \leq b_2$, then $H$ is nondecreasing.
\item\label{thm:ssprofileprop-item2} \textup{(Boundedness and limits)} $\min\{b_1,b_2\}\leq H \leq \max\{b_1,b_2\}$ in $\R$, and
\[
\lim_{\xi\to-\infty}H(\xi)=b_1 \qquad  \textup{and} \qquad \lim_{\xi\to+\infty}H(\xi)=b_2.
\]
\item\label{thm:ssprofileprop-item3} \textup{(Regularity)} If $\Phi$ satisfies either \eqref{eq:1-phaseNL} or \eqref{eq:2-phaseNL}, then $\Phi(H)\in C_\textup{b}(\R)$.
\end{enumerate}
\end{theorem}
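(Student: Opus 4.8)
The plan is to combine three structural features of \eqref{eq:FGPME}: its invariance under spatial translations, the comparison principle of Theorem \ref{thm:genproperties}, and the continuity of the temperature in Theorem \ref{thm:regularitygen}. The selfsimilar form $h(x,t)=H(xt^{-1/(2s)})$ is immediate from Theorem \ref{thm:ssprofile}, since the step datum $h_0$ satisfies $h_0(ax)=h_0(x)$ for every $a>0$, so only the three listed properties require work.

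For the monotonicity (a) I would argue by comparison with spatial translates. Assume first $b_1\geq b_2$, so that $h_0$ is nonincreasing and hence $h_0(\cdot+\xi_0)\leq h_0$ pointwise for every $\xi_0>0$. Because $\FL$ commutes with translations and the very weak formulation \eqref{defeq:distSolFL} is translation invariant, the function $(x,t)\mapsto h(x+\xi_0,t)$ is the very weak solution with initial datum $h_0(\cdot+\xi_0)$; the comparison principle then yields $h(x+\xi_0,t)\leq h(x,t)$ a.e., i.e.\ $h(\cdot,t)$ is nonincreasing, and this passes to $H$ through the variable $\xi=xt^{-1/(2s)}$. The case $b_1\leq b_2$ is symmetric.

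Boundedness in (b) is comparison once more, now against the constant stationary solutions $\min\{b_1,b_2\}$ and $\max\{b_1,b_2\}$, which sandwich $h_0$ and therefore $h$. Existence of the limits at $\pm\infty$ is then automatic from monotonicity and boundedness, so the real task is to identify them. Writing $\ell_-:=\lim_{\xi\to-\infty}H(\xi)$ and $\ell_+:=\lim_{\xi\to+\infty}H(\xi)$, I observe that for fixed $x<0$ one has $xt^{-1/(2s)}\to-\infty$ as $t\to0^+$, hence $h(x,t)\to\ell_-$ pointwise; the uniform bound on $H$ lets me pass to the limit by dominated convergence tested against $\psi(x,t)=\phi(x)\eta(t)$ with $\supp\phi\subset(-\infty,0)$ and $\eta(0)=1$. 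Matching with the initial trace in Remark \ref{remark:VeryWeak} gives $\ell_-\int\phi=b_1\int\phi$, whence $\ell_-=b_1$; taking $\supp\phi\subset(0,\infty)$ gives $\ell_+=b_2$.

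Finally, the regularity (c) follows from Theorem \ref{thm:regularitygen}: since the solution exists on $Q_T$ for arbitrarily large $T$, we have $\Phi(h)\in C(Q_T)$ for some $T>1$, and restricting to the line $t=1$ shows that $x\mapsto\Phi(h(x,1))=\Phi(H(x))$ is continuous, while boundedness is inherited from the bound on $H$ together with the local Lipschitz continuity of $\Phi$. I expect the main obstacle to be the identification of the limits in (b), since it is the one step where the selfsimilar profile, the qualitative bounds, and the weak sense in which $h_0$ is attained must all be reconciled; the remaining parts reduce to clean applications of comparison and of the continuity theorem.
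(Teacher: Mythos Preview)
Your proposal is correct and follows essentially the same route as the paper: translation invariance plus comparison for (a), comparison with constant solutions for the bounds in (b), selfsimilarity combined with the weak attainment of the initial datum (Remark \ref{remark:VeryWeak}) for the limits in (b), and Theorem \ref{thm:regularitygen} evaluated at $t=1$ for (c). Your treatment of the limits in (b) in fact spells out in more detail what the paper only sketches by a reference; as a minor point, avoid reusing the symbol $\xi_0$ for the translation parameter, since it clashes with the free-boundary notation used later in the paper.
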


\begin{proof}
Part \eqref{thm:ssprofileprop-item1} follows by translation invariance and uniqueness of the equation (i.e. $h(x+c,t)$ is the solution corresponding to $h_0(x+c)$ for all $c\in \R$), since by comparison, if $h_0(\cdot+c)\geq h_0$ then $H(\xi+c)=h(\xi+c,1)\geq h(\xi,1)=H(\xi)$. The bounds in \eqref{thm:ssprofileprop-item2} are a consequence of comparison and the fact that any constant is an stationary solution of \eqref{eq:FGPME}. The limits in \eqref{thm:ssprofileprop-item2} are obtained by selfsimilarity and the fact that the initial condition is taken in the sense of Remark \ref{remark:VeryWeak} (see Lemma 3.13 in \cite{dTEnVa19} for more details). Finally, \eqref{thm:ssprofileprop-item3} follows from Theorem \ref{thm:regularitygen} and $H(\xi)=h(\xi,1)$.
\end{proof}

\begin{remark}
By translation invariance, one can obtain selfsimilar solutions not centred at $x=0$ by just considering $h_{0,c}=h_0(\cdot+c)$ for any $c\in \R$. In this way, one obtains selfsimilar profiles of the form $H_c=H(\cdot+c)$. Moreover, selfsimilar solutions in $\R$ also provide a family of selfsimilar solutions in $\R^N$ by extending the initial data constantly in the remaining directions. See Section 3.1 in \cite{dTEnVa19} for details.
\end{remark}

\subsection{Numerical schemes}
As in \cite{dTEnVa19}, we can have a theory of convergent explicit finite-difference schemes (see also \cite{dTEnJa19}).
More precisely, we discretize \eqref{eq:FGPME} by
\begin{equation}\label{P1:NumSch}
V_\beta^j=V_\beta^{j-1}-\varDelta t\Levy^{\varDelta x}\Phi(V_{\cdot}^{j-1})_\beta
\end{equation}
where $V$ is the approximation of the enthalpy defined in the uniform in space and time grid $\varDelta x \Z^N \times (\varDelta t\N)\cap[0,T] $ for $\varDelta x, \varDelta t>0$, i.e
\[
V_\beta^j\approx h(x_\beta,t_j) \quad \textup{for} \quad x_\beta:=\beta\varDelta x  \in \varDelta x \Z^N \quad \textup{and} \quad t_j:=j \varDelta t \in (\varDelta t\N)\cap[0,T]
\]
On the other hand, $\mathcal{L}^{\Delta x}$ is a monotone finite-difference discretization of $\FL$ (see e.g. \cite{DTEnJa18b}). It takes the form:
\begin{equation}\label{eq:NumSchOperator}
\mathcal{L}^{\varDelta x}\psi(x_\beta)=\mathcal{L}^{\varDelta x}\psi_\beta=\sum_{\gamma\neq0}\big(\psi(x_\beta)-\psi(x_\beta+z_\gamma)\big)\omega_{\gamma,\varDelta x}
\end{equation}
where $\omega_{\gamma,\Delta x}=\omega_{-\gamma,\Delta x}$ are nonnegative weights chosen such that the following consistency assumption hold:
\begin{equation}\label{eq:NumSchConsistency}
\|\mathcal{L}^{\varDelta x}\psi-\FL\psi\|_{L^1(\R^N)}\to 0 \quad \textup{as} \quad \varDelta x \to 0^+ \quad \textup{for all} \quad \psi\in C_\textup{c}^\infty(\R^N).
\end{equation}
Together with \eqref{P1:NumSch} one needs to prescribe an initial condition. Since $h_0$ is merely $L^\infty$ we need to take
\[
V_\beta^0=\frac{1}{\varDelta x^N}\int_{x_\beta + \varDelta x(-1/2,1/2]^N}h_0(x)\dd x,
\]
or just  $V_\beta^0=h_0(x_\beta)$ if $h_0$ has pointwise values everywhere in $\R^N$.

From \cite{dTEnVa19} (see also \cite{dTEnJa19}), we get the following convergence result.

\begin{theorem}\label{thm:NumApproxInBounded}
Assume \eqref{eq:Phias}. Let $h\in L^\infty(Q_T)$ be the very weak solution of \eqref{eq:FGPME} with $ h_0\in L^\infty(\R^N)$ as initial data such that $h_0-h_0(\cdot+\xi)\in L^1(\R^N)$ for all  $\xi>0$, $\varDelta t,\varDelta x>0$ be such that $\varDelta t\lesssim  \varDelta x^{2s}$, $\mathcal{L}^{\varDelta x}$ be such that \eqref{eq:NumSchOperator} and \eqref{eq:NumSchConsistency} hold, and $V_\beta^j$ be the solution of \eqref{P1:NumSch}. Then, for all compact sets $K\subset \R^N$, we have that
$$
\max_{t_j\in(\Delta t\Z)\cap[0,T]}\bigg\{\sum_{x_\beta\in (h\Z^N)\cap K}\int_{x_\beta+\Delta x(-\frac{1}{2},\frac{1}{2}]^N}|V_\beta^j-h(x,t_j)|\dd x\bigg\}\to0\qquad\textup{as}\qquad \Delta x\to0^+.
$$
\end{theorem}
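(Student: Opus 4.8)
The plan is to run the standard convergence machinery for monotone, conservative finite-difference schemes, adapted to the nonlocal operator $\FL$ and to merely bounded initial data. The four ingredients I would establish are: (i) monotonicity and $L^\infty$-stability of the scheme; (ii) a discrete $L^1$-contraction; (iii) discrete equicontinuity in space and time; and (iv) consistency, allowing passage to the limit. The uniqueness in Theorem \ref{thm:ext-uniq} then upgrades subsequential convergence to convergence of the whole family, which is what is needed since the error functional is being driven to zero along $\Delta x\to0^+$.

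First I would record the structural form of the update. Writing $x_\beta+z_\gamma=x_{\beta+\gamma}$ and using \eqref{eq:NumSchOperator}, the scheme reads
$$V_\beta^j = V_\beta^{j-1} - \Delta t \sum_{\gamma\neq 0}\big(\Phi(V_\beta^{j-1}) - \Phi(V_{\beta+\gamma}^{j-1})\big)\omega_{\gamma,\Delta x}.$$
Because $\Phi$ is nondecreasing and, by \eqref{eq:Phias}, Lipschitz with some constant $L_\Phi$ on the bounded range $[-\|h_0\|_{L^\infty},\|h_0\|_{L^\infty}]$ that the scheme preserves (a one-line induction), the right-hand side is a nondecreasing function of every $V_{\beta+\gamma}^{j-1}$ with $\gamma\neq0$, and of $V_\beta^{j-1}$ precisely when $\Delta t\,L_\Phi\sum_{\gamma\neq0}\omega_{\gamma,\Delta x}\le1$. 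Since the weights realize the order-$2s$ scaling $\sum_{\gamma\neq0}\omega_{\gamma,\Delta x}\sim\Delta x^{-2s}$, the CFL condition $\Delta t\lesssim\Delta x^{2s}$ is exactly what makes the scheme monotone. Monotonicity plus the fact that $\mathcal{L}^{\Delta x}$ annihilates constants (so spatially constant grid functions are stationary) then yields the $L^\infty$-bound propagating in time.

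Second, monotonicity together with conservativity ($\sum_\beta\mathcal{L}^{\Delta x}\psi_\beta=0$, from $\omega_{\gamma,\Delta x}=\omega_{-\gamma,\Delta x}$) gives, via the Crandall--Tartar lemma, a discrete $L^1$-contraction: for two grid solutions $V,\tilde V$,
$$\sum_\beta (V_\beta^j - \tilde V_\beta^j)^+\Delta x^N \le \sum_\beta (V_\beta^0 - \tilde V_\beta^0)^+\Delta x^N.$$
Applying this to $\tilde V$ started from the shifted data $V^0(\cdot+e)$ controls the spatial $L^1$-modulus of $V^j$ by that of $V^0$; here the hypothesis $h_0-h_0(\cdot+\xi)\in L^1(\R^N)$ is essential, since it furnishes a finite, $j$-uniform spatial modulus even though $h_0\notin L^1$. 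The temporal equicontinuity in $L^1_{\textup{loc}}$ is the delicate point: one cannot bound $V^j-V^{j-1}=-\Delta t\,\mathcal{L}^{\Delta x}\Phi(V^{j-1})$ naively, because $\mathcal{L}^{\Delta x}$ has operator norm $\sim\Delta x^{-2s}$. Instead I would test against $\psi$, move $\mathcal{L}^{\Delta x}$ onto $\psi$ by symmetry, and split the discrete operator at a free radius $r$ into a short-range part (a bounded operator applied to $\Phi(V)$, hence controlled by the spatial modulus) and a long-range part (controlled by $\|\Phi(V)\|_{L^\infty}$); optimizing over $r$ produces a $j$-uniform temporal modulus. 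This ``time-regularity-from-space-regularity'' estimate is where the bulk of the work lies.

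With the $L^\infty$-bound and the uniform space-time $L^1_{\textup{loc}}$ moduli in hand, a discrete Kolmogorov--Riesz compactness argument extracts a subsequence whose piecewise-constant interpolant converges in $L^1_{\textup{loc}}(Q_T)$, and in $C([0,T];L^1_{\textup{loc}})$, to some $h\in L^\infty(Q_T)$. I would then pass to the limit in the tested scheme: after summation by parts in time the discrete time-derivative term converges against $\partial_t\psi$, the nonlocal term converges because $\mathcal{L}^{\Delta x}\psi\to\FL\psi$ in $L^1(\R^N)$ by \eqref{eq:NumSchConsistency} while $\Phi(V)\to\Phi(h)$ in $L^1_{\textup{loc}}$ by continuity of $\Phi$ and dominated convergence, and the initial layer reproduces $\int_{\R^N}h_0\psi(\cdot,0)$. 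Hence $h$ solves \eqref{defeq:distSolFL}, so by uniqueness it is the very weak solution and the full family converges, giving the stated bound. I expect the temporal-equicontinuity estimate, and the bookkeeping to keep all constants uniform in $\Delta x$ under the CFL scaling, to be the principal obstacles; a shorter alternative would be to invoke the $L^1\cap L^\infty$ convergence theorem of \cite{dTEnJa19} directly and transfer it to the $L^\infty$ setting through the monotone $L^1\cap L^\infty$ approximation of Appendix A in \cite{dTEnVa19}.
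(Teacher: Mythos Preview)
The paper does not actually prove this theorem: it is stated as a direct consequence of the results in \cite{dTEnVa19} (see also \cite{dTEnJa19}), with no argument given in the text beyond that citation. Your ``shorter alternative'' in the last sentence---invoking the $L^1\cap L^\infty$ convergence theorem of \cite{dTEnJa19} and transferring it to the merely $L^\infty$ setting via the monotone approximation of Appendix~A in \cite{dTEnVa19}---is precisely the paper's route.

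Your main proposal is therefore considerably more detailed than what the paper does, and it is essentially correct as a self-contained sketch: the monotonicity/CFL analysis, discrete $L^1$-contraction via Crandall--Tartar, space equicontinuity from the translation hypothesis on $h_0$, time-from-space equicontinuity via the short/long-range splitting of $\mathcal{L}^{\Delta x}$, Kolmogorov--Riesz compactness, and identification of the limit through consistency and uniqueness are exactly the ingredients underlying the results in \cite{dTEnJa19,dTEnVa19} that the paper cites. The gain of your approach is that it is self-contained and makes visible where each hypothesis (the CFL bound, the translation condition on $h_0$, the consistency \eqref{eq:NumSchConsistency}) is used; the gain of the paper's approach is brevity, since the heavy lifting has already been done elsewhere. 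There is no genuine gap in your outline.
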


The above convergence is the discrete version of convergence in $C([0,T];L_\textup{loc}^1(\R^N))$.

\begin{remark} \rm
We would like to mention that all the results of Section \ref{sec:commonTheory} apply also in the local case, i.e., replacing $\FL$ by $-\Delta$ in \eqref{eq:FGPME}. More precisely,
\begin{enumerate}[$\bullet$]
\item The existence part as in Theorem \ref{thm:ext-uniq} is a classical matter (see \cite{Vaz07}). We also refer to Appendix A in \cite{dTEnVa19} for a modern reference in a more general local-nonlocal context.
\item Properties as in Theorem \ref{thm:genproperties} follow from the results in Appendix A in \cite{dTEnVa19}. See also \cite{DTEnJa17a, DTEnJa17b}.
\item Regularity of $\Phi(h)$ as in Theorem \ref{thm:regularitygen} is the classical result of Caffarelli and Evans in \cite{CaEv83}.
\item Convergence of numerical schemes as in Theorem \ref{thm:NumApproxInBounded} follows from the results of \cite{dTEnJa19} replacing $\Levy^{\varDelta x}$ in \eqref{eq:NumSchOperator} by the standard monotone finite-difference discretization of the Laplacian:
\[
-\Delta_{\varDelta x} \psi(x_\beta):=\sum_{i=1}^N \frac{2\psi(x_\beta) -\psi(x_\beta + e_i \varDelta x)-\psi(x_\beta - e_i \varDelta x)}{\varDelta x^2}.
\]
\end{enumerate}
\end{remark}

%%%%%%%%%%%%%%%%%%%%%%%%%%%%%%%%%%%%%%%%%%%%%%%%%%%%%%
\section{The one-phase fractional Stefan problem}\label{sec.1phase}

Here we list a series of important results regarding the one-phase fractional Stefan problem
\begin{equation}\label{eq:1-phaseStef}
\partial_t h + \FL \Phi_1(h) = 0 \quad \textup{in} \quad Q_T,
\end{equation}
where $\Phi_1$ is given by \eqref{eq:1-phaseNL} with $k_0=1$. Since such $\Phi_1$ is locally Lipschitz, all the results listed in Section \ref{sec:commonTheory} apply for the very weak solution $h$ of \eqref{eq:1-phaseStef}. Moreover, we list (without proofs) a series of interesting results recently obtained in \cite{dTEnVa19}.

We start by stating the fine properties of the selfsimilar profile.

\begin{theorem}\label{thm:SS1p}
Assume that $\Phi_1$ is given by \eqref{eq:1-phaseNL} with $L>0$, and let the assumptions of Theorem \ref{thm:ssprofileprop} hold with $b_1=L+P_1$ and $b_2=L-P_2$ for $P_1,P_2>0$. The profile $H$ has the following additional properties:
\begin{enumerate}[\rm (a)]
\item \textup{(Free boundary)}  There exists a unique finite $\xi_0>0$  such that $H(\xi_0)=L$.  This means that the free boundary of the space-time solution $h(x,t)$  at the level $L$  is given by the  curve
    \begin{equation*}
    x=\xi_0\,t^{\frac{1}{2s}} \qquad\textup{for all}\qquad t\in(0,T).
    \end{equation*}
Moreover, $\xi_0>0$ depends only on $s$ and the ratio $P_2/P_1$ (but not on $L$).
\smallskip
\item \textup{(Improved monotonicity)} $H$ is strictly decreasing in $[\xi_0,+\infty)$.
\smallskip
\item\label{thm-SS-item4} \textup{(Improved regularity)} $H\in C_\textup{b}(\R)$.  Moreover, $H\in C^\infty((\xi_0,+\infty))$, $H\in C^{1,\alpha}((-\infty,\xi_0))$ for some $\alpha>0$,   and \eqref{eq:SSE} is satisfied in the classical sense in $\R\setminus\{\xi_0\}$.
\smallskip
\item\label{thm-SS-item6} \textup{(Behaviour near the free boundary)} For $\xi$ close to $\xi_0$ and  $\xi\leq \xi_0$,
\[
H(\xi)-L = O((\xi_0-\xi)^{s}).
\]
\item\label{thm-SS-item7}  \textup{(Fine behaviour at $+\infty$)} For all $  \xi> \xi_0$, we have $H'(\xi)<0$ and for $  \xi\gg \xi_0$,
$$
H(\xi)- (L-P_2)\asymp 1/|\xi|^{2s}, \qquad H'(\xi)\asymp-1/|\xi|^{1+2s}.
$$

 \item\label{thm-SS-item8}  \textup{(Mass transfer)}
If $s>1/2$, then
\[
\int_{-\infty}^0\big( (L+P_1)- H(\xi)\big)\dd \xi= \int_0^{+\infty} \big(H(\xi)-(L-P_2)\big) \dd \xi<+\infty.
\]
If $s\leq 1/2$ both integrals above are infinite.
\end{enumerate}
\end{theorem}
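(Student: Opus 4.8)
The plan is to work throughout with the temperature profile $w:=\Phi_1(H)=(H-L)_+$, which is nonnegative and continuous by Theorem~\ref{thm:ssprofileprop}, and to exploit that $H$ is nonincreasing here (since $b_1=L+P_1>b_2=L-P_2$). Set $\xi_0:=\inf\{\xi:H(\xi)\le L\}$, so that $w>0$ on $(-\infty,\xi_0)$ and $w\equiv0$ on $(\xi_0,+\infty)$. The computational backbone is the identity read off from \eqref{eq:SSE},
\begin{equation*}
H'(\xi)=\frac{2s}{\xi}\,\FL w(\xi),\qquad \FL w(\xi)=c_{1,s}\,\mathrm{p.v.}\!\int_{\R}\frac{w(\xi)-w(\eta)}{|\xi-\eta|^{1+2s}}\dd\eta ,
\end{equation*}
valid wherever the equation holds classically. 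To locate the interface I would first evaluate the profile equation at $\xi=0$, where the transport term vanishes: this forces $\FL w(0)=0$, which is incompatible with $w(0)=0$ (as $w\ge0$, $w\not\equiv0$, that would give $\FL w(0)<0$), so $w(0)>0$ and hence $\xi_0>0$. Finiteness of $\xi_0$ follows by contradiction: if $H>L$ on all of $\R$ then $\Phi_1(H)=H-L$ and $H$ solves the profile equation of the \emph{linear} fractional heat equation, whose unique bounded solution with these limits is continuous and strictly decreasing from $L+P_1$ to $L-P_2$ and thus strictly crosses the level $L$. The dependence of $\xi_0$ on $s$ and $P_2/P_1$ only is a scaling remark: for $G:=H-L$ the equation $-\tfrac1{2s}\xi G'+\FL G_+=0$ is invariant under $G\mapsto\lambda G$, which leaves the zero level set fixed.

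The improved monotonicity and the smoothness in the ice region $(\xi_0,+\infty)$ are then immediate from the identity above: there $w(\xi)=0$, so $\FL w(\xi)=-c_{1,s}\int_{-\infty}^{\xi_0}w(\eta)|\xi-\eta|^{-1-2s}\dd\eta$ is a convergent integral whose kernel never becomes singular (the denominator is bounded away from $0$), hence it is smooth in $\xi$ and strictly negative; since $\xi>\xi_0>0$ this gives $H'(\xi)<0$ and, differentiating under the integral sign, $H\in C^\infty((\xi_0,+\infty))$. The interior $C^{1,\alpha}$ regularity on the water region $(-\infty,\xi_0)$, where $w=H-L>0$ solves $-\tfrac1{2s}\xi w'+\FL w=0$, I would obtain from the standard interior Schauder/bootstrap theory for the fractional Laplacian applied to the transported equation. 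Global continuity of $H$ (hence $H\in C_\textup{b}(\R)$) combines the continuity of $w$ from Theorem~\ref{thm:ssprofileprop}, the ice-region smoothness just established, and the matching value $H(\xi_0)=L$; once this regularity is in hand, \eqref{eq:SSE} holds classically off $\xi_0$.

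The near-interface rate and the tail asymptotics form the analytic core, and I expect them to be the main obstacle, since they require uniform control of the nonlocal integral rather than any local comparison. For the behaviour $H(\xi)-L=O((\xi_0-\xi)^s)$ I would build a barrier from the one-dimensional fractional-harmonic function $(\,\cdot\,)_+^s$ (which satisfies $\FL(x_+^s)=0$ on $\{x>0\}$), reflected and anchored at $\xi_0$, and compare it with $w$ near the interface; the exponent $s$ is the characteristic fractional obstacle-problem rate. For the tails I would extract the leading term of $\FL w$ \emph{driven by the nonzero limit} $w(-\infty)=P_1$: for $\xi\gg\xi_0$ one finds $\FL w(\xi)\sim-\tfrac{c_{1,s}P_1}{2s}\xi^{-2s}$, whence $H'(\xi)=\tfrac{2s}{\xi}\FL w(\xi)\asymp-\xi^{-1-2s}$, and integrating from $+\infty$ gives $H(\xi)-(L-P_2)\asymp\xi^{-2s}$; the analogous computation at $-\infty$ yields $(L+P_1)-H(\xi)\asymp|\xi|^{-2s}$.

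Finally, the mass-transfer identity I would derive from conservation of mass (Theorem~\ref{thm:genproperties}) written in selfsimilar variables. Changing variables $\xi=xt^{-1/2s}$ in $\int_\R(h(x,t)-h_0(x))\dd x$ produces $t^{1/2s}\big(\int_0^{\infty}(H-(L-P_2))-\int_{-\infty}^0((L+P_1)-H)\big)$; since the equation conserves this quantity, it must be constant in $t$, and as $t^{1/2s}$ is nonconstant this forces the bracket to vanish, giving the claimed equality. Convergence of both integrals is exactly the $\asymp|\xi|^{-2s}$ tail from the previous step, since $\int^{\pm\infty}|\xi|^{-2s}\dd\xi<\infty$ iff $2s>1$, i.e. $s>1/2$, and both diverge for $s\le1/2$. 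The delicate point is justifying conservation at this threshold: approximating the constant test function by cutoffs $\phi_R$, the error $\int w\,\FL\phi_R$ is controlled by $R^{1-2s}$, which vanishes precisely when $s>1/2$, matching the finiteness range.
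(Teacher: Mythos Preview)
The paper does not prove this theorem. Section~\ref{sec.1phase} states explicitly that the one-phase results are ``list[ed] (without proofs) \dots\ recently obtained in \cite{dTEnVa19}''; Theorem~\ref{thm:SS1p} is quoted from that reference and no argument is reproduced here, so there is nothing in the present paper to compare your proposal against.

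On its own merits your outline is broadly reasonable, but several steps are softer than you present them. Your argument for $\xi_0>0$ (evaluate \eqref{eq:SSE} pointwise at $\xi=0$ and use the sign of $\FL w(0)$) presupposes classical validity of the profile equation at the origin, i.e.\ the very regularity you establish only later, so as written it is circular; one needs either an approximation argument or a weak-form version of the sign computation. Finiteness of $\xi_0$ is immediate from Theorem~\ref{thm:ssprofileprop}\eqref{thm:ssprofileprop-item2} (the limit $L-P_2<L$) and does not require the detour through the linear profile. Continuity of $H$ at $\xi_0$ is not free: continuity of $w=(H-L)_+$ gives only $H(\xi_0^-)=L$, and you still must exclude a downward jump $H(\xi_0^+)<L$; the phrase ``matching value'' hides this. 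For part~\eqref{thm-SS-item8} you cannot invoke Theorem~\ref{thm:genproperties}\eqref{thm:genproperties-item4} directly since $h_0\notin L^1(\R)$; the cutoff argument you sketch is the right idea, but the error produced by testing against $\phi_R$ is a space--time integral $\int_0^t\!\int_\R \Phi_1(h)\,\FL\phi_R$, not the single spatial integral you wrote, and the cancellation that yields an \emph{equality} (rather than only convergence) has to be made explicit.
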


Again, we remind the reader that selfsimilar solutions in $\R$ also provide a family of selfsimilar solutions in $\R^N$ by extending the initial data constantly in the remaining directions. Once the above properties are established in that case as well, one can prove that the temperature $u:=(h-L)_+$ has the property of finite speed of propagation under very mild assumptions on the initial data.

\begin{theorem}[Finite speed of propagation for the temperature]\label{coro:NFiniteSpeed2}
Let $h\in L^\infty(Q_T)$ be the very weak solution of \eqref{eq:1-phaseStef} with $ h_0\in L^\infty(\R^N)$ as initial data and $u:=\Phi_1(h)$. If $\supp\{\Phi_1(h_0+\veps)\}\subset B_R(x_0)$ for some $\veps>0$, $R>0$, and $x_0\in \R^N$, then:
\begin{enumerate}[{\rm (a)}]
\item\label{coro:NFiniteSpeed2-item-a}  \textup{(Growth of the support)}  $\supp\{{u(\cdot,t)\}}\subset B_{R+\xi_0 t^{\frac{1}{2s}}}(x_0)$ for some $\xi_0>0$ and all $t\in(0,T)$.
\item \textup{(Maximal support)}  $\supp\{{u(\cdot,t)\}}\subset B_{\tilde{R}}(x_0)$ for all  $t\in(0,+\infty)$ with
$$
\tilde{R}= \left(\veps^{-1}\|\Phi_1(h_0)\|_{L^\infty(\R^N)}+1\right)^{\frac{1}{N}}R.
$$
\end{enumerate}
\end{theorem}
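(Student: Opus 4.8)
\emph{Set-up and strategy.} Write $\Phi_1(h)=(h-L)_+$ and set $M_0:=\|\Phi_1(h_0)\|_{L^\infty(\R^N)}$. If $M_0=0$ then $h_0\le L$ a.e., so comparison (Theorem~\ref{thm:genproperties}(a)) with the stationary solution $L$ gives $h\le L$, whence $u\equiv0$ and both claims are trivial; I therefore assume $M_0>0$. In particular $h_0\le L+M_0$ a.e., while the hypothesis $\supp\{\Phi_1(h_0+\veps)\}\subset B_R(x_0)$ means $h_0\le L-\veps$ a.e.\ outside $B_R(x_0)$. The plan is to run both parts by comparison against the explicit barriers furnished by the self-similar analysis, the new ingredient in (b) being a mass balance read off from the $L^1$-contraction.

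\emph{Part (a): planar self-similar barriers.} Fix $t\in(0,T)$ and a point $y$ with $|y-x_0|>R+\xi_0\,t^{1/(2s)}$, where $\xi_0=\xi_0(s,\veps/M_0)>0$ is the free-boundary constant of Theorem~\ref{thm:SS1p}. Put $e:=(y-x_0)/|y-x_0|$, $z:=x_0+Re\in\partial B_R(x_0)$, and let $\bar h$ solve \eqref{eq:1-phaseStef} with the step datum $\bar h_0=L+M_0$ on $\{(x-z)\cdot e\le0\}$ and $\bar h_0=L-\veps$ on $\{(x-z)\cdot e>0\}$. Since the hyperplane $\{(x-z)\cdot e=0\}$ is tangent to $B_R(x_0)$ at $z$, the open far half-space is disjoint from $\overline{B_R(x_0)}$; together with the two bounds above this gives $\bar h_0\ge h_0$ a.e., hence $u\le\Phi_1(\bar h)$. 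But $\bar h$ is exactly the one-dimensional self-similar solution of Theorems~\ref{thm:ssprofileprop} and~\ref{thm:SS1p} in the variable $\zeta:=(x-z)\cdot e$ with $(P_1,P_2)=(M_0,\veps)$, extended constantly in the orthogonal directions, so by the free-boundary property $\Phi_1(\bar h)=0$ on $\{\zeta>\xi_0\,t^{1/(2s)}\}$. As $(y-z)\cdot e=|y-x_0|-R>\xi_0\,t^{1/(2s)}$, we conclude $u(y,t)=0$; since $\xi_0$ is independent of $e$, this proves (a).

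\emph{Part (b): radial barrier and mass balance.} Now dominate $h_0$ by the \emph{radial} datum $\hat h_0:=(L+M_0)\mathbf{1}_{B_R(x_0)}+(L-\veps)\mathbf{1}_{B_R(x_0)^c}\ge h_0$, and let $\hat h$ solve \eqref{eq:1-phaseStef} with this datum; by comparison $u\le\hat u:=\Phi_1(\hat h)$, so it suffices to localise $\supp\hat u$. Comparing $\hat h$ with the stationary solution $L-\veps$ via the $L^1$-contraction (Theorem~\ref{thm:genproperties}(c)) yields, for a.e.\ $t$,
\[
\veps\,\big|\{\hat h(\cdot,t)>L\}\big|\le\int_{\R^N}\big(\hat h(\cdot,t)-(L-\veps)\big)_+\dd x\le\int_{\R^N}\big(\hat h_0-(L-\veps)\big)_+\dd x=(M_0+\veps)\,|B_R(x_0)|,
\]
the first inequality because the integrand exceeds $\veps$ wherever $\hat h>L$. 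Hence $|\{\hat h(\cdot,t)>L\}|\le\veps^{-1}(M_0+\veps)\,|B_R(x_0)|=|B_{\tilde R}(x_0)|$, with exactly the stated radius $\tilde R=(\veps^{-1}M_0+1)^{1/N}R$.

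\emph{Part (b): localisation (main obstacle).} It remains to upgrade this measure bound to the containment $\{\hat h(\cdot,t)>L\}\subset B_{\tilde R}(x_0)$, and this is the one genuinely nonlocal step. I would show that $\hat h(\cdot,t)$ is radially symmetric and radially nonincreasing about $x_0$: symmetry is immediate from rotational invariance of \eqref{eq:1-phaseStef} and uniqueness (Theorem~\ref{thm:ext-uniq}), while monotonicity I would get by a moving-plane comparison. For a unit vector $e$ and $\lambda>0$, writing $T_\lambda$ for the reflection in $\{(x-x_0)\cdot e=\lambda\}$, one checks $|T_\lambda x-x_0|>|x-x_0|$ on the near side $\{(x-x_0)\cdot e<\lambda\}$, so $\hat h_0\ge\hat h_0\circ T_\lambda$ there; since $\Phi_1$ is nondecreasing, the antisymmetric quantity $\hat h(\cdot,t)-\hat h(T_\lambda\,\cdot,t)$ obeys an order-preserving nonlocal problem, and an antisymmetric maximum principle keeps it nonnegative on the near side for all $t$. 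Letting $\lambda\to0^+$ and rotating $e$ gives radial monotonicity, so $\{\hat h(\cdot,t)>L\}=B_{\rho(t)}(x_0)$ and the measure bound forces $\rho(t)\le\tilde R$. I expect this antisymmetric maximum principle, where the long-range interaction of $\FL$ across the reflection hyperplane must be controlled, to be the main technical obstacle; alternatively it can be bypassed by coupling the finite-speed estimate of part~(a) with a retention/connectedness argument showing that the rotationally symmetric water region grows continuously out of $B_R(x_0)$ and hence remains a single ball.
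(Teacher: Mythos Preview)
The paper does not actually prove this theorem: it is listed in Section~\ref{sec.1phase} among results quoted ``without proofs'' from \cite{dTEnVa19}. There is therefore no in-paper proof to compare your attempt against; I can only assess your argument on its own merits.

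Your proof of part~(a) is correct and is precisely the intended mechanism hinted at in the text preceding the statement: dominate $h_0$ by a half-space step datum tangent to $B_R(x_0)$, invoke the $N$-dimensional extension of the one-dimensional self-similar solution with $(P_1,P_2)=(M_0,\veps)$, and read off the free-boundary location from Theorem~\ref{thm:SS1p}. The dependence of $\xi_0$ on the ratio $\veps/M_0$ only (not on the direction $e$) is exactly what makes the ball estimate uniform.

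For part~(b), your mass-balance step via the $L^1$-contraction against the constant $L-\veps$ is clean and produces exactly the constant $\tilde R$ in the statement. The remaining step---showing that the radially symmetric solution $\hat h(\cdot,t)$ is radially nonincreasing so that $\{\hat h(\cdot,t)>L\}$ is a ball---is correctly flagged as the main obstacle, but what you give is only a sketch. An antisymmetric comparison principle across a hyperplane for very weak solutions of the degenerate nonlocal equation \eqref{eq:1-phaseStef} is not available from the results stated in Section~\ref{sec:commonTheory} (Theorem~\ref{thm:genproperties} gives global comparison only), so this step would need to be supplied separately; your alternative ``retention/connectedness'' route is too vague to evaluate. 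In short: (a) is complete, (b) has the right skeleton and the right constant but a genuine gap at the monotonicity step, which you have honestly identified.
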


Moreover, the temperature not only propagates with finite speed, but it also preserves the positivity sets, an important qualitative aspect of the solution.

\begin{theorem}[Conservation of positivity for the temperature $u$] \label{thm:conspositivityu}
Let $h\in L^\infty(Q_T)$ be the very weak solution of \eqref{eq:1-phaseStef} with $ h_0\in L^\infty(\R^N)$ as initial data and $u:=\Phi_1(u)$. If $u(x,t^*)>0$ in an open set  $\Omega\subset \R^N$ for a given time $t^*\in(0,T)$, then
\[
u(x,t)>0 \qquad \textup{for all} \qquad (x,t)\in \Omega \times [t^*,T).
\]
The same result holds for $t^*=0$ if $u_0=\Phi_1(h_0)$ is either $C(\Omega)$ or strictly positive in $\overline{\Omega}$.
\end{theorem}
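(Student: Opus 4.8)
The plan is to exploit the strong positivity of the nonlocal operator, with the continuity of the temperature providing the topological backbone. Write $u=\Phi_1(h)=(h-L)_+\ge0$, which is continuous by Theorem \ref{thm:regularitygen}, so that $\{u>0\}=\{h>L\}$ is open in $Q_T$. It then suffices to fix $x_0\in\Omega$ and show $u(x_0,t)>0$ for every $t\in[t^*,T)$. The statement for $t^*=0$ follows by the identical argument once we have continuity \emph{up to} the initial time, which is precisely the boundary continuity granted by the second assertion of Theorem \ref{thm:regularitygen} under the stated hypotheses; the alternative assumption ``$u_0$ strictly positive in $\overline\Omega$'' furnishes the uniform lower bound $u(\cdot,0)\ge m>0$ on $\overline\Omega$ needed to start.

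The mechanism is a nonlocal minimum principle. In the open water region $\{u>0\}$ one has $\partial_t h=\partial_t u$, so the equation reads $\partial_t u+\FL u=0$ there. Using
\[
\FL u(x,t)=c_{N,s}\,\mathrm{p.v.}\!\int_{\R^N}\frac{u(x,t)-u(y,t)}{|x-y|^{N+2s}}\,\dd y,
\]
at any point where $u(x,t)=0$ — necessarily a global minimum of the nonnegative function $u(\cdot,t)$ — the cancellation is unnecessary and
\[
\FL u(x,t)=-\,c_{N,s}\int_{\R^N}\frac{u(y,t)}{|x-y|^{N+2s}}\,\dd y<0
\]
as soon as $u(\cdot,t)\not\equiv0$, because the kernel is strictly positive everywhere. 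Hence $\partial_t h=-\FL u>0$ wherever the temperature vanishes but is not identically zero: the enthalpy is strictly pushed upward at the edge of the water region, so $h$ cannot cross from $\{h>L\}$ down to $\{h=L\}$. Formally, if the conclusion failed we would set $t_1:=\inf\{t>t^*:u(x_0,t)=0\}$; continuity would force $h(x_0,t_1)=L$ with $h(x_0,\cdot)>L$ on $[t^*,t_1)$, so $h(x_0,\cdot)$ reaches $L$ from above with nonpositive time derivative at $t_1$ — contradicting $\partial_t h(x_0,t_1)>0$.

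The difficulty is that this pointwise computation is only heuristic for our merely continuous solutions: precisely at the free boundary, the selfsimilar analysis of Theorem \ref{thm:SS1p}\eqref{thm-SS-item6} gives $u\sim\mathrm{dist}^{\,s}$, so the integral defining $\FL u$ there may diverge to $-\infty$ (heuristically favorable, as it pushes $h$ up even harder, but it blocks any naive classical evaluation). To make the argument rigorous I would run it on the monotone scheme of Theorem \ref{thm:NumApproxInBounded}. Writing $u_\beta^{j}:=\Phi_1(V_\beta^{j})\ge0$ and $W:=\sum_{\gamma\neq0}\omega_{\gamma,\Dx}$, a direct computation from \eqref{P1:NumSch} gives, whenever $V_\beta^{j-1}>L$,
\[
V_\beta^{j}-L=(1-\Dt\,W)\,u_\beta^{j-1}+\Dt\sum_{\gamma\neq0}u_{\beta+\gamma}^{j-1}\,\omega_{\gamma,\Dx},
\]
so under the CFL condition $\Dt\lesssim\Dx^{2s}$ (giving $\Dt\,W\le1$ and monotonicity) the right-hand side is positive, whence the discrete water regions $\{V^{\,j}>L\}$ are nondecreasing in $j$. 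This is the clean, unconditional discrete form of conservation of positivity. The main obstacle — and the step I expect to absorb most of the work — is upgrading this to \emph{strict} positivity of the continuum limit at $(x_0,t_1)$: the crude bound $u_\beta^{j}\ge(1-\Dt W)u_\beta^{j-1}$ degenerates as $\Dx\to0^+$, so one must retain the nonlocal influx term $\Dt\sum_{\gamma}u_{\beta+\gamma}^{j-1}\omega_{\gamma,\Dx}$ and combine it with continuity and the uniform bound $u(\cdot,t^*)\ge m$ on a ball $B_r(x_0)\subset\Omega$ to extract a lower bound stable under $\Dx\to0^+$. Finite speed of propagation (Theorem \ref{coro:NFiniteSpeed2}) shows no uniform-in-space bound can survive, so the estimate must be genuinely local around $x_0$, tracking only the neighbours that keep feeding heat across the free boundary.
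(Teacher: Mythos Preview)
The paper does not actually prove this theorem: Section~\ref{sec.1phase} explicitly lists the results on the one-phase problem ``without proofs'', deferring to \cite{dTEnVa19} for the arguments. There is therefore no proof in the present paper to compare your proposal against.

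On the substance of your proposal: the heuristic minimum-principle computation is the right intuition, and your discrete argument correctly shows that the scheme \eqref{P1:NumSch} propagates the set $\{V^{j}>L\}$ forward in $j$ under CFL. You are also right that the crude one-step bound $u_\beta^{j}\ge(1-\Dt\,W)\,u_\beta^{j-1}$ degenerates as $\Dx\to0^+$, since $W\sim\Dx^{-2s}$ and $(1-\Dt\,W)^{T/\Dt}\to0$. The honest acknowledgment that the passage from discrete qualitative positivity to continuum \emph{strict} positivity is where the real work lies is accurate --- and as written the proposal stops short of supplying that step. One should expect that closing this gap requires either a quantitative discrete lower barrier that is stable under refinement, or (more likely, and more in the spirit of \cite{dTEnVa19}) a direct continuum comparison with an explicit subsolution --- e.g.\ a translate or rescaling of the selfsimilar solutions whose fine properties are catalogued in Theorem~\ref{thm:SS1p} --- placed beneath $h$ so as to force $h>L$ at $(x_0,t)$ for all $t\ge t^*$. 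Without such a barrier or a uniform-in-$\Dx$ quantitative bound, the argument remains a sketch.
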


Finally, we have that the enthalpy $h$ has infinite speed of propagation, with precise estimates on the tail. For simplicity, we state it only for positive solutions.

\begin{theorem}[Infinite speed of propagation and tail behaviour for the enthalpy $h$]\label{thm:infinitespeedh}
Let $0\leq h\in L^\infty(Q_T)$ be the very weak solution of \eqref{eq:1-phaseStef} with $0\leq h_0\in L^\infty(\R^N)$ as initial data.
\begin{enumerate}[{\rm (a)}]
\item If $h_0\ge L+ \veps>L$ in $B_\rho(x_1)$ for $x_1\in \R^N$ and $\rho,\veps>0$, then $h(\cdot,t)>0$ for all $t\in(0,T)$.
\item If additionally $\supp\{h_0\}\subset B_\eta(x_0)$ for $x_0\in \R^N$ and $\eta>0$ , then
\begin{equation*}
h(x,t)\asymp 1/|x|^{N+2s} \qquad \textup{for all $t\in(0,T)$ and $|x|$ large enough.}
\end{equation*}
\end{enumerate}
\end{theorem}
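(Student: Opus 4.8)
The plan is to extract from the very weak formulation \eqref{defeq:distSolFL} a pointwise representation of the time derivative of the enthalpy at points lying outside the support of the temperature, and to use this single identity as the engine for both assertions. Write $u:=\Phi_1(h)=(h-L)_+$ and recall the singular-integral representation $\FL\phi(x)=c_{N,s}\,\mathrm{P.V.}\int_{\R^N}\frac{\phi(x)-\phi(y)}{|x-y|^{N+2s}}\dd y$ for a constant $c_{N,s}>0$. First I would record that if $x$ lies in the (open) exterior of $\supp\{u(\cdot,t)\}$, then $u(x,t)=0$, the principal value is superfluous, and
\begin{equation*}
-\FL u(x,t)=c_{N,s}\int_{\R^N}\frac{u(y,t)}{|x-y|^{N+2s}}\dd y\geq 0 .
\end{equation*}
Testing \eqref{defeq:distSolFL} against $\psi(x,t)=\phi(x)\chi(t)$ with $\phi\in C^\infty_{\mathrm c}$ supported in that exterior region, using self-adjointness of $\FL$ and the uniform continuity of $u$ from Theorem \ref{thm:regularitygen}, I would upgrade this to the pointwise-in-time identity $\partial_t h(x,t)=c_{N,s}\int_{\R^N} u(y,t)|x-y|^{-(N+2s)}\dd y$ for such $x$.

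For part (a) I would argue by cases for an arbitrary $(x_*,t_*)$ with $t_*>0$. Since $h_0\geq L+\veps$ on $B_\rho(x_1)$, we have $u_0\geq\veps>0$ there, so applying the conservation of positivity of the temperature (Theorem \ref{thm:conspositivityu}) on $B_{\rho/2}(x_1)$ with $t^*=0$ yields $u(\cdot,\tau)\not\equiv0$ for every $\tau\in(0,T)$. If $u(x_*,\tau_0)>0$ for some $\tau_0\leq t_*$, then by continuity $u>0$ on an open set at time $\tau_0$, and Theorem \ref{thm:conspositivityu} forces $u(x_*,t_*)>0$, hence $h(x_*,t_*)>L>0$. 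Otherwise $x_*$ remains in the cold region $\{h\leq L\}=\{u=0\}$ throughout $(0,t_*]$; there the identity above gives $\partial_\tau h(x_*,\tau)\geq0$ with strict sign (because $u(\cdot,\tau)\not\equiv0$), so integrating in time and using $h_0\geq0$ gives $h(x_*,t_*)>0$.

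For part (b) I would use that $\supp\{h_0\}\subset B_\eta(x_0)$ together with the finite speed of propagation for the temperature (Theorem \ref{coro:NFiniteSpeed2}) to place $\supp\{u(\cdot,\tau)\}\subset B_{\tilde R}(x_0)$ for all $\tau\in(0,T)$, reducing by comparison (Theorem \ref{thm:genproperties}) to the case of temperature supported in a fixed ball. For $|x|$ large compared with $\tilde R$ one has $|x-y|\asymp|x|$ uniformly in $y\in B_{\tilde R}(x_0)$, so
\begin{equation*}
\partial_\tau h(x,\tau)=c_{N,s}\int_{B_{\tilde R}(x_0)}\frac{u(y,\tau)}{|x-y|^{N+2s}}\dd y\asymp\frac{1}{|x|^{N+2s}}\,\|u(\cdot,\tau)\|_{L^1(\R^N)} .
\end{equation*}
Integrating in time and using $h_0(x)=0$ for $|x|$ large gives $h(x,t)\asymp|x|^{-(N+2s)}\int_0^t\|u(\cdot,\tau)\|_{L^1}\dd\tau$. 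The time integral is finite because $u$ is bounded with support in the fixed ball, and strictly positive by part (a) combined with conservation of positivity; this is precisely the claimed behaviour $h(x,t)\asymp|x|^{-(N+2s)}$.

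The main obstacle is the rigorous justification of the pointwise identity $\partial_t h=-\FL u$ in the very weak framework, and in particular its validity near the free boundary $\partial\{u>0\}$, where $\FL u$ need not be an absolutely convergent integral. I would handle this by restricting the argument to the open cold region, where $u\equiv0$ on a neighborhood and the integral is harmless, by invoking the uniform continuity of $u$ from Theorem \ref{thm:regularitygen}, and—for the measure-zero-in-time set of free-boundary instants and for the attainment of the initial datum—by approximating through the monotone convergent scheme of Theorem \ref{thm:NumApproxInBounded}, whose update $V^j_\beta=V^{j-1}_\beta+\Delta t\sum_{\gamma\neq0}\Phi(V^{j-1}_{\beta+\gamma})\omega_{\gamma,\Delta x}$ at cold nodes makes the instantaneous spreading of positivity transparent.
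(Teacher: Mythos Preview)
The paper does not prove this statement: Section~\ref{sec.1phase} explicitly records the one-phase results ``without proofs'', deferring to \cite{dTEnVa19}. So there is no in-paper argument to compare against, and I can only assess your proposal on its own merits and against what the surrounding toolbox suggests.

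Your engine --- the identity $\partial_t h(x,t)=c_{N,s}\int_{\R^N}u(y,t)\,|x-y|^{-(N+2s)}\dd y$ at points where $u$ vanishes --- is exactly the right observation, and for part~(b) the argument is essentially complete. Once Theorem~\ref{coro:NFiniteSpeed2} confines $\supp u(\cdot,\tau)$ to a fixed ball $B_{\tilde R}(x_0)$ for all $\tau$, any $x$ with $|x|$ large is at uniform positive distance from that ball, the kernel satisfies $|x-y|^{-(N+2s)}\asymp|x|^{-(N+2s)}$ uniformly in $y\in B_{\tilde R}(x_0)$, and integration in time gives the two-sided bound. The attainment of $h(x,0^+)=0$ is harmless here because $h_0\in L^1\cap L^\infty$, so Theorem~\ref{thm:genproperties}\eqref{thm:genproperties-item5} furnishes $h\in C([0,T];L^1_{\textup{loc}})$.

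For part~(a) there is a real gap in the ``otherwise'' branch. Knowing $u(x_*,\tau)=0$ for all $\tau\le t_*$ does \emph{not} place $x_*$ in the open exterior of $\supp u(\cdot,\tau)$; if $x_*$ sits on the free boundary your testing argument with $\phi$ supported away from $\supp u$ never reaches $x_*$, and the identity is unjustified there. Your numerical-scheme workaround is plausible at the discrete level, but the convergence in Theorem~\ref{thm:NumApproxInBounded} is only $L^1_{\textup{loc}}$ in space, which does not transfer strict pointwise positivity to the limit. A cleaner repair, using what you already have, is this: by Theorem~\ref{thm:conspositivityu} the set $\{u(\cdot,t)>0\}$ is nondecreasing in $t$, so if $x\in\text{int}\{u(\cdot,t_*)=0\}$ then a fixed ball $B_r(x)$ lies in $\{u(\cdot,\tau)=0\}$ for \emph{every} $\tau\le t_*$, hence $x$ is at distance $\ge r$ from $\supp u(\cdot,\tau)$ uniformly in $\tau$, and your identity is rigorous there. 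Together with $h>L$ on $\{u(\cdot,t_*)>0\}$ this gives $h(\cdot,t_*)>0$ on the open set $\{u(\cdot,t_*)>0\}\cup\text{int}\{u(\cdot,t_*)=0\}$. What remains is the topological free boundary $\partial\{u(\cdot,t_*)>0\}$; you still need to argue that this set has Lebesgue measure zero (or handle it by some barrier), and that step is missing from your outline.
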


The question of asymptotic behaviour is still under study, but we refer to the preliminary results of the one-phase work \cite{dTEnVa19}.

%%%%%%%%%%%%%%%%%%%%%%%%%%%%%%%%%%%%%%%%%%%%%%%%%%%%%%%%%%%%%%%%%%%%%%%%%%%%
\section{The two-phase fractional Stefan problem}\label{sec.2phase}

In this section we treat the two-phase fractional Stefan problem, i.e.,
\begin{equation}\label{eq:2-phaseStef}
\partial_t h + \FL \Phi_2(h) = 0 \quad \textup{in} \quad Q_T,
\end{equation}
where $\Phi_2$ is given by the  graph \eqref{eq:2-phaseNL}. Again, we make the choice  $k_1,k_2=1$.

\subsection{Relations between one-phase and two-phase Stefan problems}

Here we will see that any solution of the two-phase Stefan problem is essentially bounded from above and from below by solutions of the one-phase Stefan problem.

\begin{proposition}\label{prop:1-phaseBounds2-phase}
Let $h\in L^\infty(Q_T)$ be the very weak solution of \eqref{eq:2-phaseStef} with $h_0\in L^\infty(\R^N)$ as initial data; $\overline{h}\in L^\infty(Q_T)$ the very weak solution of \eqref{eq:1-phaseStef} with $\overline{h}_0:= \max\{h_0,0\}$; $\tilde{h}\in L^\infty(Q_T)$ the very weak solution of \eqref{eq:1-phaseStef} with $\tilde{h}_0:= -\min\{h_0,L\}+L$; and define $\underline{h}=-\tilde{h}+L$.

Then $\underline{h}\leq h \leq \overline{h}$ in $Q_T$.
\end{proposition}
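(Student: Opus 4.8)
The plan is to reduce both inequalities to the comparison principle for the two-phase equation \eqref{eq:2-phaseStef} (Theorem \ref{thm:genproperties}\eqref{thm:genproperties-item1}) by showing that $\overline{h}$ and $\underline{h}$, although manufactured out of one-phase solutions, are \emph{themselves} very weak solutions of \eqref{eq:2-phaseStef}, with initial data lying respectively above and below $h_0$. Everything rests on two elementary pointwise identities between the nonlinearities in \eqref{eq:1-phaseNL} and \eqref{eq:2-phaseNL} (recall $k_0=k_1=k_2=1$): first, $\Phi_2(w)=\Phi_1(w)$ whenever $w\geq 0$, since $\min\{w,0\}=0$ there; and second, the reflection symmetry $\Phi_2(L-w)=-\Phi_2(w)$ for all $w\in\R$, which follows at once from $(L-w-L)_+=-\min\{w,0\}$ together with $\min\{L-w,0\}=-(w-L)_+$.

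\textbf{Upper bound.} First I would note $\overline{h}_0=\max\{h_0,0\}\geq 0$, and since the constant $0$ is a stationary solution of \eqref{eq:1-phaseStef}, comparison gives $\overline{h}\geq 0$ a.e.\ in $Q_T$. On $\{\overline{h}\geq 0\}$ the first identity yields $\Phi_1(\overline{h})=\Phi_2(\overline{h})$ a.e., so the defining equality \eqref{defeq:distSolFL} for the one-phase problem satisfied by $\overline{h}$ is verbatim the very weak formulation of the two-phase problem: the integrand $\Phi_1(\overline{h})\FL\psi$ equals $\Phi_2(\overline{h})\FL\psi$. Thus $\overline{h}$ is a very weak solution of \eqref{eq:2-phaseStef} with initial datum $\overline{h}_0\geq h_0$, and comparison for \eqref{eq:2-phaseStef} gives $h\leq\overline{h}$.

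\textbf{Lower bound.} Since $\tilde{h}_0=L-\min\{h_0,L\}\geq 0$, the same sign-preservation argument gives $\tilde{h}\geq 0$, so $\tilde{h}$ too is a very weak solution of \eqref{eq:2-phaseStef}. It then remains to show that the reflection $w\mapsto L-w$ maps two-phase solutions to two-phase solutions. Substituting $\underline{h}=L-\tilde{h}$ and, by the second identity, $\Phi_2(\underline{h})=-\Phi_2(\tilde{h})$ into \eqref{defeq:distSolFL}, the $\tilde{h}$-terms reproduce minus the (vanishing) weak formulation satisfied by $\tilde{h}$, while the leftover constant-$L$ terms cancel after using $\int_0^T\partial_t\psi\,\dd t=-\psi(\cdot,0)$ (valid since $\psi(\cdot,T)=0$ for $\psi\in C_\textup{c}^\infty(\R^N\times[0,T))$). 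Hence $\underline{h}$ is a very weak solution of \eqref{eq:2-phaseStef} with initial datum $\underline{h}_0=\min\{h_0,L\}\leq h_0$, and comparison gives $\underline{h}\leq h$.

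\textbf{Main obstacle.} The only point that is not purely formal is justifying the two reductions ``$\overline{h}$, $\underline{h}$ solve \eqref{eq:2-phaseStef}'' at the level of very weak (merely $L^\infty$) solutions. This is not serious, because $\FL$ acts solely on the smooth test function in \eqref{defeq:distSolFL}, the nonlinearities are bounded, and the pointwise identities hold almost everywhere; the one place demanding genuine care is the reflection computation above, where the boundary contribution at $t=0$ and the vanishing of $\psi$ at $t=T$ must be tracked explicitly. Everything else is bookkeeping.
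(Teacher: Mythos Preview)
Your proof is correct and follows essentially the same route as the paper: show that $\overline{h}$ and $\underline{h}$ are themselves very weak solutions of the two-phase problem (via nonnegativity and a reflection), then invoke comparison for \eqref{eq:2-phaseStef}. The only cosmetic difference is that for the lower bound the paper uses the identity $\Phi_2(h)=-\Phi_1(L-h)$ valid when $h\le L$ (its Lemma~\ref{lem:subsolonephase}), whereas you first recognize $\tilde{h}$ as a two-phase solution and then apply the global symmetry $\Phi_2(L-w)=-\Phi_2(w)$; the content is identical.
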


We need two lemmas to prove this result.

\begin{lemma}\label{lem:supersolonephase}
Let $0 \leq h_0\in L^\infty(\R^N)$. Then $h\in L^\infty(Q_T)$ is a very weak solution of \eqref{eq:2-phaseStef} if and only if $h\in L^\infty(Q_T)$ is a very weak solution of \eqref{eq:1-phaseStef}.
\end{lemma}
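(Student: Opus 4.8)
The plan is to exploit the elementary pointwise identity between the two nonlinearities on the nonnegative half-line together with the comparison principle. Concretely, for every $r\geq 0$ one has $\min\{r,0\}=0$, so
\[
\Phi_2(r)=\max\{r-L,0\}+\min\{r,0\}=\max\{r-L,0\}=\Phi_1(r)\qquad\textup{for all }r\geq0.
\]
Thus the two equations have identical nonlinearities as long as the solution stays nonnegative, and the whole proof reduces to establishing that the relevant solution is indeed nonnegative.

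First I would record that the constant function $0$ is a very weak solution of both \eqref{eq:1-phaseStef} and \eqref{eq:2-phaseStef}: since $L>0$ we have $\Phi_1(0)=\Phi_2(0)=0$, so $\partial_t 0+\FL\Phi_i(0)=0$ holds trivially in the sense of Definition \ref{def:VeryWeak} with zero initial data. Now suppose $h$ is a very weak solution of \eqref{eq:2-phaseStef} with $0\leq h_0$. By the comparison property (Theorem \ref{thm:genproperties}\eqref{thm:genproperties-item1}) applied within the two-phase problem, comparing the initial data $h_0\geq 0$ against that of the zero solution yields $h\geq 0$ a.e.\ in $Q_T$; here I invoke uniqueness (Theorem \ref{thm:ext-uniq}) to identify the given $h$ with the unique solution to which comparison applies. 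Then $\Phi_2(h)=\Phi_1(h)$ a.e.\ in $Q_T$, so the term $\Phi(h)\FL\psi$ in \eqref{defeq:distSolFL} is unchanged when $\Phi_2$ is replaced by $\Phi_1$; hence $h$ satisfies the very weak formulation of \eqref{eq:1-phaseStef} with the same $h_0$, i.e.\ $h$ is a very weak solution of the one-phase problem.

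The converse direction is completely symmetric: if $h$ is a very weak solution of \eqref{eq:1-phaseStef} with $0\leq h_0$, then comparison within the one-phase problem against the zero solution again gives $h\geq 0$ a.e., whence $\Phi_1(h)=\Phi_2(h)$ a.e.\ and $h$ solves \eqref{eq:2-phaseStef}. The only point requiring any care---and the main (mild) obstacle---is the justification that $h\geq 0$: this rests on $0$ being an admissible stationary comparison function for each problem and on invoking uniqueness so that the monotone comparison of Theorem \ref{thm:genproperties}\eqref{thm:genproperties-item1} pins down the sign of the given solution. Once nonnegativity is in hand, the equivalence of the two integral identities is immediate from the a.e.\ coincidence of $\Phi_1(h)$ and $\Phi_2(h)$.
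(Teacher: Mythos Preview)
Your proof is correct and follows essentially the same approach as the paper: use comparison with the zero solution to obtain $h\geq 0$, then observe that $\Phi_1$ and $\Phi_2$ coincide on nonnegative arguments so the very weak formulations agree. You have simply made explicit the details (that $0$ is a stationary solution, the role of uniqueness, and both directions of the equivalence) that the paper leaves implicit in its two-line argument.
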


\begin{proof}
By comparison, $h_0\geq0$ implies that $h\geq0$.  Thus,
\[
\Phi_2(h)=\max\{h-L,0\}+\min\{h,0\}=\max\{h-L,0\}=\Phi_1(h),
\]
which concludes the proof.
\end{proof}

\begin{lemma}\label{lem:subsolonephase}
Let $L \geq h_0\in L^\infty(\R^N)$. Then $h\in L^\infty(Q_T)$ is a very weak solution of \eqref{eq:2-phaseStef} with initial data $h_0$  if and only if $\tilde{h}=-h+L$ is a very weak solution of \eqref{eq:1-phaseStef} with initial data $\tilde{h}_0=-h_0+L$.
\end{lemma}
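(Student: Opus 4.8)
The plan is to exploit the reflection $\tilde{h}=L-h$, which reflects the enthalpy about the level $L/2$ and maps the two-phase nonlinearity onto minus the one-phase one, but only after we guarantee that $h$ stays below the melting level $L$.

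First I would record the pointwise algebraic identity that drives the whole argument: for any real number $a\geq0$,
\[
\Phi_2(L-a)=\max\{-a,0\}+\min\{L-a,0\}=-\max\{a-L,0\}=-\Phi_1(a),
\]
where the middle equality uses $a\geq0$ to annihilate the first term and to rewrite $\min\{L-a,0\}=-\max\{a-L,0\}$. Hence, whenever $\tilde{h}=L-h\geq0$ we have the pointwise relation $\Phi_2(h)=-\Phi_1(\tilde{h})$ a.e. in $Q_T$.

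Second, I would secure the sign constraint $\tilde{h}\geq0$, equivalently $h\leq L$. For the forward implication I observe that the constant function $L$ is itself a very weak solution of \eqref{eq:2-phaseStef} (indeed $\Phi_2(L)=0$ since $L>0$, and $\partial_t$ and $\FL$ annihilate constants); since $h_0\leq L$, the comparison principle of Theorem \ref{thm:genproperties}\eqref{thm:genproperties-item1} yields $h\leq L$ a.e.\ in $Q_T$, so $\tilde{h}\geq0$. For the reverse implication I would argue symmetrically: the constant $0$ is a very weak solution of \eqref{eq:1-phaseStef} (because $\Phi_1(0)=\max\{-L,0\}=0$), and $\tilde{h}_0=L-h_0\geq0$, so comparison gives $\tilde{h}\geq0$ directly, whence $h=L-\tilde{h}\leq L$. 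In either case the identity of the first step is now valid throughout $Q_T$.

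Third, I would pass to the integral formulations. Starting from \eqref{defeq:distSolFL} for $h$ with a test function $\psi\in C_{\textup{c}}^\infty(\R^N\times[0,T))$, I substitute $h=L-\tilde{h}$, $\Phi_2(h)=-\Phi_1(\tilde{h})$, and $\tilde{h}_0=L-h_0$, and then collect the terms carrying the constant $L$. These are $\int_0^T\!\int_{\R^N}L\,\partial_t\psi\dd x\dd t+\int_{\R^N}L\,\psi(x,0)\dd x$, and they cancel exactly: since $\psi$ has compact support in $[0,T)$ one has $\int_0^T\partial_t\psi\dd t=\psi(x,T)-\psi(x,0)=-\psi(x,0)$, so the first term equals $-\int_{\R^N}L\,\psi(x,0)\dd x$. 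What remains is precisely $-1$ times the very weak formulation of \eqref{eq:1-phaseStef} for $\tilde{h}$ with initial data $\tilde{h}_0$, so $\tilde{h}$ is a very weak solution of the one-phase problem. Reading the same chain of equalities backwards proves the converse, since every manipulation — the algebraic identity, the cancellation of the $L$-terms, and the affine substitution — is reversible once $\tilde{h}\geq0$ is known.

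The main obstacle is the second step: the reflection identity $\Phi_2(L-a)=-\Phi_1(a)$ holds only for $a\geq0$, so the entire equivalence hinges on ruling out the set $\{h>L\}$ via comparison against the constant barrier $L$ (respectively against $0$ on the one-phase side). The remaining ingredients — the pointwise identity and the cancellation of the constant in the distributional pairing — are routine.
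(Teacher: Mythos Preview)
Your proof is correct and follows essentially the same approach as the paper: both use comparison against the constant barrier to secure $h\leq L$ (equivalently $\tilde h\geq0$), then invoke the pointwise identity $\Phi_2(h)=-\Phi_1(\tilde h)$ and pass to the weak formulation. The only cosmetic difference is that you work directly with the integral identity \eqref{defeq:distSolFL} and explicitly cancel the $L$-terms, whereas the paper phrases the equation in $\mathcal{D}'(Q_T)$ and handles the initial datum via Remark \ref{remark:VeryWeak}; your version is also slightly more explicit about the reverse implication.
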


\begin{proof}
By comparison, $h_0\leq L$ implies that $h\leq L$. Moreover,
\[
\Phi_2(h)=\max\{h-L,0\}+\min\{h,0\}=\min\{h,0\}.
\]
Now take $\tilde{h}=-h+L$. Then, $\partial_t \tilde{h}=-\partial_t h$  in $\mathcal{D'}(Q_T)$, and
\[
\Phi_2(h)=\min\{-\tilde{h}+L,0\}=-\max\{\tilde{h}-L,0\}=-\Phi_1(\tilde{h}).
\]
That is,
\[
\partial_t h + \FL \Phi_2(h)= - \partial_t \tilde{h} - \FL \Phi_1(\tilde{h}) \quad \textup{in} \quad \mathcal{D'}(Q_T).
\]
Finally, the initial data relation follows from Remark \ref{remark:VeryWeak}.
\end{proof}

\begin{proof}[Proof of Proposition \ref{prop:1-phaseBounds2-phase}]
Lemmas \ref{lem:supersolonephase} and \ref{lem:subsolonephase} ensure that $\overline{h}$ and $\underline{h}$ are solutions of \eqref{eq:2-phaseStef} with initial data $\overline{h}_0:=\max\{h_0,0\}\geq0$ and $\underline{h}_0:=\min\{h_0,L\}\leq L$ respectively. By the relation $\min\{h_0,L\} \leq h_0\leq \max\{h_0,L\}$ and comparison for problem \eqref{eq:2-phaseStef}, we have that $\underline{h}\leq h \leq \overline{h}$.
\end{proof}

\subsection{A selfsimilar solution with antisymmetric temperature data}
We continue to address properties of the same type as in Theorem \ref{thm:SS1p} for the two-phase Stefan problem.
In the case where the initial temperature is an antisymmetric function, the solution has a unique interphase point  between the water and the ice region, and it lies at $x=0$ for all times (stationary interphase). As a consequence, the enthalpy $h$ is continuous for all $x\not=0$ and discontinuous at $x=0$ for all times $t>0$. This is the precise result:

\begin{theorem}\label{thm:DiscontinuousSolutionOf2-phase}
Assume $N=1$, $P>0$, and
\begin{equation*}
h_0(x):=\begin{cases}
L+P  \quad\quad&\textup{if}\quad   x\leq0,\\
-P  \quad\quad&\textup{if}\quad   x>0.
\end{cases}
\end{equation*}
Let $h$ be selfsimilar solution (given by Theorem \ref{thm:ssprofileprop}) of \eqref{eq:2-phaseStef} with initial data $h_0$. Let also $H$ and $U$ be the corresponding profiles. Then, additionally to the properties given in Theorem  \ref{thm:ssprofileprop}, we have that:
\begin{enumerate}[\rm (a)]
\item \textup{(Antisymmetry)} $U(\xi)=-U(-\xi)$ for all $\xi\in \R$ (hence, $H(\xi)=-H(-\xi)+L$  for $\xi\not=0$).
\item \textup{(Interphase and discontinuity)} $H$ is discontinuous at $\xi=0$, where it has a jump of size $L$. More precisely:  $H(\xi)>L$ if $\xi<0$, $H(\xi)<0$ if $\xi>0$, and $U(\xi)=0$ if and only if $\xi=0$.
\end{enumerate}
\end{theorem}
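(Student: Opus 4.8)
The plan is to exploit an antisymmetry of the nonlinearity together with the uniqueness statement of Theorem~\ref{thm:ext-uniq} to get part (a), and then to rule out a mushy interval by a sign argument inside the profile equation \eqref{eq:SSE} to get part (b). First I would record the algebraic identity
\begin{equation*}
\Phi_2(L-r)=-\Phi_2(r)\qquad\textup{for all }r\in\R,
\end{equation*}
which follows at once from $\Phi_2(r)=\max\{r-L,0\}+\min\{r,0\}$ and $-\max\{a,0\}=\min\{-a,0\}$. Since $\FL$ commutes with reflections and annihilates constants, this identity shows that $\tilde h(x,t):=L-h(-x,t)$ is again a very weak solution of \eqref{eq:2-phaseStef}: one computes $\Phi_2(\tilde h)(x,t)=-\Phi_2(h)(-x,t)$, hence $\partial_t\tilde h+\FL\Phi_2(\tilde h)=-\big(\partial_t h+\FL\Phi_2(h)\big)(-x,t)=0$. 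Its initial datum is $L-h_0(-\cdot)=h_0$ a.e., so uniqueness forces $h(x,t)=L-h(-x,t)$ a.e., i.e. $H(\xi)=L-H(-\xi)$ for $\xi\neq0$. Applying $\Phi_2$ and the identity yields $U(\xi)=-U(-\xi)$; by continuity of $U$ (Theorem~\ref{thm:regularitygen}) this holds everywhere, which is part (a), and in particular $U(0)=0$.

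For part (b) I would first note that $U=\Phi_2(H)$ is nonincreasing (the nondecreasing $\Phi_2$ composed with the nonincreasing $H$ of Theorem~\ref{thm:ssprofileprop}), continuous, and satisfies $U(-\infty)=P>0$, $U(+\infty)=-P<0$. Hence its zero set is a closed interval which, by the antisymmetry just proved, equals $[-a,a]$ for some $a\geq0$, with $U>0$ on $(-\infty,-a)$ and $U<0$ on $(a,+\infty)$. Since $\{U=0\}=\{0\leq H\leq L\}$, the whole of part (b) reduces to proving $a=0$, that is, that there is \emph{no} nondegenerate mushy interval.

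This last step is the main obstacle. Assume $a>0$. On $(-a,a)$ one has $U\equiv0$, so the singular part of $\FL U$ disappears and $\xi\mapsto\FL U(\xi)$ is a smooth function there; inserting this into \eqref{eq:SSE} gives $\xi H'(\xi)=2s\,\FL U(\xi)$ in $\mathcal D'$, whence $H\in C^1((0,a))$ with $H'(\xi)=\tfrac{2s}{\xi}\FL U(\xi)$. The crucial computation is the sign of $\FL U(\xi_1)$ at a fixed $\xi_1\in(0,a)$: writing $\FL U(\xi_1)=-c_s\int_{\R}|\xi_1-y|^{-1-2s}U(y)\dd y$ with $c_s>0$ the normalising constant, discarding $[-a,a]$ where $U=0$, and folding the two tails via $U(-y)=-U(y)$, one obtains
\begin{equation*}
\FL U(\xi_1)=-c_s\int_a^{\infty}U(y)\Big[(y-\xi_1)^{-1-2s}-(y+\xi_1)^{-1-2s}\Big]\dd y.
\end{equation*}
The bracket is strictly positive for $y>a$ and $U(y)<0$ there, so $\FL U(\xi_1)>0$; therefore $H'(\xi_1)=\tfrac{2s}{\xi_1}\FL U(\xi_1)>0$, contradicting that $H$ is nonincreasing. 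Hence $a=0$. I expect the delicate points here to be justifying the pointwise (classical) reading of \eqref{eq:SSE} on $(0,a)$ from the distributional formulation and checking that the tail integral converges and folds correctly; note the argument is independent of $s$, consistent with the later appearance of genuine mushy regions only for non-antisymmetric data.

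Finally, $a=0$ means $U(\xi)=0$ if and only if $\xi=0$, and the sign of $U$ gives $H>L$ on $(-\infty,0)$ and $H<0$ on $(0,+\infty)$. On $\{H>L\}$ one has $H=U+L$ and on $\{H<0\}$ one has $H=U$; since $U$ is continuous with $U(0)=0$, letting $\xi\to0^-$ gives $H(0^-)=L$ while $\xi\to0^+$ gives $H(0^+)=0$, so $H$ jumps by exactly $L$ at $\xi=0$ and is continuous elsewhere (being a locally Lipschitz function of the continuous $U$ away from the level $0$). This establishes part (b).
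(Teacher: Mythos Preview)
Your proof is correct and follows essentially the same approach as the paper: uniqueness plus a reflection symmetry for (a), and a sign computation of $\FL U$ on a putative mushy interval $(0,a)$ leading to $H'>0$ for (b). The only cosmetic difference is that the paper first translates by $L/2$ (via an auxiliary lemma) to make the nonlinearity odd and then applies $h\mapsto -h(-x,t)$, whereas you combine both steps into the single map $h\mapsto L-h(-x,t)$ using the identity $\Phi_2(L-r)=-\Phi_2(r)$; this is a harmless streamlining of the same argument.
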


To prove this theorem, we need a simple lemma.

\begin{lemma}\label{lem:antisimetrization}
$h \in L^\infty(Q_T)$ is a very weak solution weak solution of \eqref{eq:2-phaseStef} with initial data $h_0\in L^\infty(\R^N)$ if and only if $\tilde{h}=h-L/2$ is a very weak solution of
\[
\partial_t \tilde{h} + \FL \tilde{\Phi}_2(\tilde{h}) = 0 \quad \textup{in} \quad Q_T \qquad \textup{with} \qquad  \tilde{\Phi}_2(\tilde{h})=\max\Big\{\tilde{h}-\frac{L}{2},0\Big\}+\min\Big\{\tilde{h}+\frac{L}{2},0\Big\}
\]
and initial data $\tilde{h}_0=h_0-L/2$.
\end{lemma}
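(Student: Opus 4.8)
The plan is to exploit the fact that passing from $h$ to $\tilde{h}=h-L/2$ is merely a constant vertical shift, and to verify that this shift turns the nonlinearity $\Phi_2$ into $\tilde{\Phi}_2$ while leaving the very weak identity \eqref{defeq:distSolFL} invariant up to terms that cancel. First I would record the pointwise algebraic identity: writing $h=\tilde{h}+L/2$ and substituting into $\Phi_2(h)=\max\{h-L,0\}+\min\{h,0\}$,
\[
\Phi_2(h)=\max\Big\{\tilde{h}+\frac{L}{2}-L,0\Big\}+\min\Big\{\tilde{h}+\frac{L}{2},0\Big\}=\max\Big\{\tilde{h}-\frac{L}{2},0\Big\}+\min\Big\{\tilde{h}+\frac{L}{2},0\Big\}=\tilde{\Phi}_2(\tilde{h}),
\]
valid for every value of $h$. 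In particular $\tilde{\Phi}_2$ is again nondecreasing and locally Lipschitz (it is piecewise linear with a flat interval $[-L/2,L/2]$), so that \eqref{eq:Phias} holds and Definition \ref{def:VeryWeak} applies verbatim to the shifted problem.

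Next I would insert $h=\tilde{h}+L/2$, the identity $\Phi_2(h)=\tilde{\Phi}_2(\tilde{h})$, and $h_0=\tilde{h}_0+L/2$ into the very weak identity \eqref{defeq:distSolFL} for \eqref{eq:2-phaseStef}. The flux term $\int_0^T\int_{\R^N}\Phi_2(h)\FL\psi\dd x\dd t$ is unchanged by the identity above. The time-derivative term splits as $\int_0^T\int_{\R^N}\tilde{h}\,\dell_t\psi\dd x\dd t+\tfrac{L}{2}\int_0^T\int_{\R^N}\dell_t\psi\dd x\dd t$, and the initial-data term splits as $\int_{\R^N}\tilde{h}_0\,\psi(\cdot,0)\dd x+\tfrac{L}{2}\int_{\R^N}\psi(\cdot,0)\dd x$.

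The only point requiring care, and really the crux of the argument, is the constant term $\tfrac{L}{2}\int_0^T\int_{\R^N}\dell_t\psi\dd x\dd t$. Since $\psi\in C_{\textup{c}}^\infty(\R^N\times[0,T))$, integrating in $t$ and using $\psi(\cdot,T)=0$ gives $\tfrac{L}{2}\int_{\R^N}(\psi(x,T)-\psi(x,0))\dd x=-\tfrac{L}{2}\int_{\R^N}\psi(x,0)\dd x$, which cancels exactly against the $\tfrac{L}{2}\int_{\R^N}\psi(\cdot,0)\dd x$ coming from the shifted initial datum. Hence, for every test function $\psi$, the left-hand side of \eqref{defeq:distSolFL} for the pair $(h,h_0)$ against $\Phi_2$ coincides term by term with the left-hand side for $(\tilde{h},\tilde{h}_0)$ against $\tilde{\Phi}_2$. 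Since this is an identity of the two functionals rather than a one-sided estimate, one vanishes for all $\psi$ if and only if the other does, which yields the asserted equivalence in both directions simultaneously. The membership $\tilde{h}\in L^\infty(Q_T)$ is immediate from $h\in L^\infty(Q_T)$, and no regularity beyond that already built into Definition \ref{def:VeryWeak} is needed.
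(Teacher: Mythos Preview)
Your proof is correct and follows essentially the same approach as the paper: verify the pointwise identity $\Phi_2(h)=\tilde{\Phi}_2(\tilde{h})$ and observe that the constant shift disappears from the time-derivative and initial-data terms. The paper is terser, invoking $\partial_t h=\partial_t\tilde{h}$ in $\mathcal{D}'(Q_T)$ and Remark~\ref{remark:VeryWeak} for the initial data rather than expanding the very weak identity by hand, but the content is the same.
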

\begin{proof}
Clearly $\tilde{\Phi}_2(\tilde{h})=\Phi_2(h)$ in $Q_T$ and $\partial_t h =\partial_t \tilde{h}$ in $\mathcal{D'}(Q_T)$. The initial data relation follows from Remark \ref{remark:VeryWeak}.
\end{proof}

\begin{proof}[Proof of Theorem \ref{thm:DiscontinuousSolutionOf2-phase}]

\noindent\textbf{1)} {\sl Antisymmetry.} We consider the translated problem as in Lemma \ref{lem:antisimetrization} and prove that $u(\cdot,t)$ and $h(\cdot,t)$ are antisymmetric. We recall that the initial datum is
\begin{equation*}
h_0(x):=\begin{cases}
\frac{L}{2}+P  \quad\quad&\textup{if}\quad   x\leq0,\\
-\frac{L}{2}-P  \quad\quad&\textup{if}\quad   x>0,
\end{cases}
\end{equation*}
and $\Phi_2(h):=\max\{{h}-\frac{L}{2},0\}+\min\{{h}+\frac{L}{2},0\}$. We avoid the superscript tilde on $\tilde{\Phi}_2$ and $\tilde{h}$ in the rest of the proof for convenience.

 To prove that $h(\cdot,t)$ is antisymmetric define $h_1(x,t):=-h(-x,t)$. Note that
\begin{equation*}
\begin{split}
\Phi_2(h_1)(x,t)&=\max\{-h(-x,t)-\frac{L}{2},0\}+\min\{-h(-x,t)+\frac{L}{2},0\}\\
&=-\min\{h(-x,t)+\frac{L}{2},0\}-\max\{h(-x,t)-\frac{L}{2},0\}=-\Phi_2(h)(-x,t).
\end{split}
\end{equation*}
Then $\FL \Phi_2(h_1)(x,t)= -\FL\Phi_2(h)(-x,t)$ and $\partial_t h_1(x,t)=-\partial_t h(-x,t)$ in $\mathcal{D}'(Q_T)$ which ensures that
\[
\partial_t h_1 +\FL \Phi_2(h_1)=0 \quad \textup{in} \quad \mathcal{D}'(Q_T).
\]
Note also that $h_0(x)=-h_0(-x)$ and thus $h_1$ is a very weak solution with initial data $h_0$. By uniqueness, this implies that $h_1=h$, which proves the antisymmetry result. The antisymmetry of $u(\cdot,t)$ follows. Note that the translation did not affect the $u$.

\noindent\textbf{2)} {\sl Interphase points.}  We go back to the original notation without translation. Since $U$ is antisymmetric and also continuous we have  $U(0)=0$. Moreover, since $U$ is nonincreasing, $U(\xi)\geq0$ if $\xi<0$ and $U(\xi)\leq0$ if $\xi>0$. Define
\[
\xi_M:=\sup\{\xi\in \R  \, : \, U(\xi)=0\}=\sup\{\xi\in \R \, : \, 0\leq H(\xi)\leq L\}.
\]
We already know that $\xi_M\geq0$, and moreover, $\xi_M<+\infty$ (since $\lim_{\xi\to+\infty}U(\xi)=-P<0$ and $U$ is continuous and nonincreasing). By antisymmetry of $U$ we also have that
\[
-\xi_M=\inf\{\xi\in \R  \, : \, U(\xi)=0\}.
\]

\noindent\textbf{3)} {\sl Conclusion.} Assume that $\xi_M>0$, i.e., $U(\xi)=0$ for all  $\xi\in[-\xi_M,\xi_M]$. Take any $\hat{\xi}\in (0,\xi_M)$. Then, by antisymmetry of $U$, we get
\[
\begin{split}
\FL U(\hat{\xi})&=-\int_{-\infty}^{-\xi_M} \frac{U(\eta)}{|\hat{\xi}-\eta|^{1+2s}}\dd \eta -\int_{\xi_M}^\infty \frac{U(\eta)}{|\hat{\xi}-\eta|^{1+2s}}\dd \eta\\
&=\int_{\xi_M}^{+\infty} U(\eta) \left(\frac{1}{|\hat{\xi}+\eta|^{1+2s}}-\frac{1}{|\hat{\xi}-\eta|^{1+2s}}\right)\dd \eta.
\end{split}
\]
Note that for all $\eta\in (\xi_M,+\infty)$, $U(\eta)<0$ and $|\hat{\xi}+\eta|=|(-\hat{\xi})-\eta|>|\hat{\xi}-\eta|$. Hence, $\FL U(\hat{\xi})>0$. The profile equation \eqref{eq:SSE} now implies that
\[
H'(\hat{\xi})=2s \frac{\FL U(\hat{\xi})}{\hat{\xi}}>0.
\]
This is a contradiction with the fact that $H$ is nonincreasing.
\end{proof}

%\color{blue}
One might be tempted to try to use the behaviour at the interphase of the above constructed solution to obtain estimates close to the free boundary of a general solution in the spirit of Theorem \ref{thm:SS1p}\eqref{thm-SS-item6}. However, in this special case, the behaviour at the interphase is quite different, as we show in the following result.

\begin{theorem}\label{thm:specialsolatinterph}
Under the assumptions of Theorem \ref{thm:DiscontinuousSolutionOf2-phase}, $u=\Phi(h)$ is the solution of the fractional heat equation in $\R\times(0,\infty)$ with the antisymmetric initial data $u_0=\Phi_2(h_0)$. Thus, it admits the integral representation \begin{equation}\label{eq:represheat}
u(x,t)=\int_{\R} \mathcal{P}_s(x-y,t) u_0(y)\dd y,
\end{equation}
where $\mathcal{P}_s$ is the fractional heat kernel. In particular,
\begin{equation*}
U(\xi)=-C\xi + O(|\xi|^{2}) \qquad \textup{for $\xi$ is close enough to $0$}
\end{equation*}
and for some $C=C(P,s)>0$.
\end{theorem}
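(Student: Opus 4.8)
The plan is to exploit the rigidity established in Theorem \ref{thm:DiscontinuousSolutionOf2-phase}: since $H(\xi)>L$ for $\xi<0$ and $H(\xi)<0$ for $\xi>0$, the mushy region is empty and the interphase sits exactly at $x=0$ for all times. Translating this back to space-time through $h(x,t)=H(xt^{-\frac{1}{2s}})$, one has $h(x,t)>L$ whenever $x<0$ and $h(x,t)<0$ whenever $x>0$. Reading off the graph $\Phi_2$ on each phase gives, a.e.\ in $Q_T$,
\[
h(x,t)=u(x,t)+L\,\mathbf{1}_{\{x<0\}},
\]
where $u=\Phi_2(h)$. The crucial point is that the corrector $L\,\mathbf{1}_{\{x<0\}}$ does not depend on $t$, so $\partial_t h=\partial_t u$ in $\mathcal{D}'(Q_T)$: the extra term integrates against $\partial_t\psi$ to $\int_\R L\,\mathbf{1}_{\{x<0\}}\big[\psi(x,T)-\psi(x,0)\big]\dd x=0$ for $\psi\in C_\textup{c}^\infty(\R\times(0,T))$. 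Since $\FL\Phi_2(h)=\FL u$, the equation $\partial_t h+\FL\Phi_2(h)=0$ becomes $\partial_t u+\FL u=0$ in $\mathcal{D}'(Q_T)$; that is, $u$ is a very weak solution of the fractional heat equation.

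Next I would identify the initial data and invoke uniqueness. From $h_0$ one computes $u_0:=\Phi_2(h_0)$, which equals $P$ for $x<0$ and $-P$ for $x>0$, i.e.\ $u_0=-P\,\sign(\cdot)$; moreover $u_0=h_0-L\,\mathbf{1}_{\{x<0\}}$, so the weak initial trace of $u$ in the sense of Remark \ref{remark:VeryWeak} is $u_0$, because that of $h$ is $h_0$ and the corrector is $t$-independent. The linear nonlinearity $\Phi(r)=r$ satisfies \eqref{eq:Phias}, so Theorem \ref{thm:ext-uniq} gives uniqueness of bounded very weak solutions; since the convolution $\int_\R\mathcal{P}_s(x-y,t)u_0(y)\dd y$ is such a solution, it must coincide with $u$, which yields the representation \eqref{eq:represheat} on $\R\times(0,\infty)$ (the extension to all $t>0$ being automatic by self-similarity).

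Finally, for the expansion near the interphase I would differentiate the representation. Writing $G:=\mathcal{P}_s(\cdot,1)$, which is smooth, even, positive at $0$, with $\int_\R G=1$ and $G,G',G''$ integrable (in $N=1$ they decay like $|z|^{-1-2s}$, $|z|^{-2-2s}$, $|z|^{-3-2s}$), the profile is $U(\xi)=u(\xi,1)=(G*u_0)(\xi)$, so $U\in C^2$ and $U(0)=0$ by antisymmetry. Differentiating under the integral and using that $G'$ is odd,
\[
U'(0)=\int_\R G'(-y)\,u_0(y)\dd y= P\int_\R G'(y)\,\sign(y)\dd y=-2P\,G(0)=-2P\,\mathcal{P}_s(0,1)<0.
\]
Setting $C:=2P\,\mathcal{P}_s(0,1)>0$ and applying Taylor's theorem with the bounded second derivative $U''=G''*u_0$ gives $U(\xi)=-C\xi+O(|\xi|^{2})$, as claimed.

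The main obstacle is really the first paragraph: one must be certain that there is neither a mushy region nor any time dependence in the set where $h$ lies strictly between the two phases, so that $h-u$ is an honestly $t$-independent jump function. This is exactly what the stationarity of the interphase in Theorem \ref{thm:DiscontinuousSolutionOf2-phase} provides; without it the identity $\partial_t h=\partial_t u$ would fail and $u$ would not solve a linear equation. The remaining steps---identifying the initial trace, invoking uniqueness, and computing with the kernel---are routine once this reduction is in place.
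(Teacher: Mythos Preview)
Your proposal is correct and follows essentially the same route as the paper's proof. Both arguments use the rigidity of Theorem~\ref{thm:DiscontinuousSolutionOf2-phase} to write $h=u+L\,\mathbf{1}_{\{x<0\}}$, observe that the $t$-independence of the corrector turns the nonlinear equation for $h$ into the linear fractional heat equation for $u$ with initial data $u_0=\Phi_2(h_0)$, invoke uniqueness to obtain the convolution representation, and then read off the expansion near $\xi=0$ with the same constant $C=2P\,\mathcal{P}_s(0,1)$; the only cosmetic differences are that the paper splits the integral $\int h\,\partial_t\psi$ explicitly over $\{x<0\}$ and $\{x>0\}$ rather than arguing $\partial_t h=\partial_t u$ abstractly, and it computes $U(\xi)=P\int_{-|\xi|}^{|\xi|}\mathcal{P}_s(z,1)\dd z$ directly instead of differentiating the convolution.
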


\begin{proof}
According to the previous results, the interphase points coincide and we know that $H(\xi)>L$ for $\xi<0$ and $H(\xi)<0$ for $\xi>0$. Consequently, for all $t\in(0,T)$, we have that
\[
h(x,t)>L \quad \textup{for} \quad x<0 \qquad \textup{and}\qquad  h(x,t)<0 \quad \textup{for} \quad x>0.
\]
so that
\[
u(x,t)=h(x,t)-L \quad \textup{for} \quad x<0 \qquad \textup{and}\qquad  u(x,t)=h(x,t) \quad \textup{for} \quad x>0.
\]
We examine the first term of the very weak formulation for $h$: For $\psi\in C_\textup{c}^\infty(\R^N\times[0,T))$,
\begin{equation*}
\begin{split}
\int_0^T \int_{\R} h \partial_t \psi \dd x \dd t&=\int_0^T \int_{-\infty}^0(h -L) \partial_t \psi  \dd x \dd t+\int_0^T \int_{0}^{+\infty} h  \partial_t \psi \dd x \dd t+ L \int_0^T \int_{-\infty}^0 \partial_t \psi  \dd x \dd t\\
&= \int_0^T \int_{-\infty}^0 u \partial_t \psi  \dd x \dd t+\int_0^T \int_{0}^{+\infty} u  \partial_t \psi \dd x \dd t- L \int_{-\infty}^0  \psi (x,0) \dd x.
\end{split}
\end{equation*}
By using the above relation in  the definition of very weak solution for $h$, we get
\begin{equation*}
\begin{split}
0&=\int_0^T \int_{\R} \big(u(x,t) \partial_t \psi(x,t) - u(x,t)\FL \psi(x,t)\big)\dd x \dd t +\int_{\R} u_0(x) \psi(x,0)\dd x.
\end{split}
\end{equation*}
with $u_0:=\Phi_2(h_0)$. To sum up, $u=\Phi_2(h)$ is the unique very weak solution of the fractional heat equation with initial data
\begin{equation*}
u_0(x):=\begin{cases}
P  \quad\quad&\textup{if}\quad   x\leq0,\\
-P  \quad\quad&\textup{if}\quad   x>0.
\end{cases}
\end{equation*}
Consequently, $u$ is given by the convolution formula \eqref{eq:represheat} (see e.g. \cite{BoSiVa17}). Moreover, it is well-known that $P_s(z,t)$ is a smooth function for all $t>0$ and $z\in \R$, and that it has the following properties
\[
\mathcal{P}_s(z,t)=\mathcal{P}_s(-z,t) \qquad \textup{and} \qquad \mathcal{P}_s(z,t)\asymp \frac{t}{(t^{1/s}+|z|^2)^{(1+2s)/2}}\,.
\]
Then, for $\xi<0$, we have
\[
\begin{split}
U(\xi)&=u(\xi,1)=P\int_{-\infty}^0 \mathcal{P}_s(\xi-y,1) \dd y- P \int_{0}^{+\infty} \mathcal{P}_s(\xi -y,1) \dd y
=P\int^{|\xi|}_{-|\xi|} \mathcal{P}_s(z,1)\dd z.
\end{split}
\]
Since $\mathcal{P}_s$ is a positive and smooth kernel,
\[
U(\xi)=P\int^{|\xi|}_{-|\xi|} (\mathcal{P}_s(0,1) + O(|z|) )\dd z = -C \xi+ O(|\xi|^2)
\]

where $C=2P \,\mathcal{P}_s(0,1)$. The bound for $\xi>0$ follows by antisymmetry.
\end{proof}

%%%%%%%%%%%%%%%%%%%%%%%%%%%%%%%%%%%%%%%%%%%%%%%%%%%%%%%%%%%%%%%%%%%%%%%

\subsection{Analysis of  general selfsimilar solutions}
Now, we analyze the fine properties of general selfsimilar solutions where the initial temperature is not antisymmetric, i.e. $P_1\ne P_2$. The main difference will be that the interface is never stationary. We may assume that $P_1> P_2$ without loss of generality; the case $P_2>P_1$ is obtained by antisymmetry.
The solution constructed in Theorem \ref{thm:DiscontinuousSolutionOf2-phase} will be used in the analysis.

Our running assumptions during this section will be
 $N=1$, $P_1> P_2>0$, and
 \begin{equation*}
h_0(x):=\begin{cases}
L+P_1  \quad\quad&\textup{if}\quad   x\leq0,\\
-P_2  \quad\quad&\textup{if}\quad   x>0.
\end{cases}
\end{equation*}
Denote $h$ as the selfsimilar solution of \eqref{eq:2-phaseStef} with initial data  $h_0$. Let also $H$ and $U$ be the corresponding profiles.

Our first main result in the section (see Theorem \ref{thm:UniqueInterphasePoints} below) establishes the existence of strictly positive interface points bounding the water region and the ice region, as well as the behaviour in the mushy region if it exists.

Our second main result (see Theorem \ref{thm:mussyregion} below), restricted to the case $s=1/2$, establishes the existence of a nonempty mushy region lying in the positive half space.

\begin{theorem}\label{thm:UniqueInterphasePoints}
Under the running assumptions, additionally to the basic properties given in Theorem  \ref{thm:ssprofileprop}, we have that:
\begin{enumerate}[\rm (a)]
\item\textup{(Unique interphase points)} There exist unique points $\xiw$ and $\xii$ with $0< \xiw\leq  \xii<+\infty$ such that
$$
H(\xiw^-)=L,  \ H(\xi)>L \quad \mbox{for }  \xi<\xiw,
$$
$$
H(\xii^{+})=0, \ H(\xi)<0 \quad \mbox{for }  \xi>\xii\,.
$$
This means that the free boundaries of the space-time solution $h(x,t)$ at the levels $L$ and $0$ are given by
\[
x_{\textup{w}}(t)=\xiw t^{\frac{1}{2s}} \quad \textup{and} \quad x_{\textup{i}}(t)=\xii t^{\frac{1}{2s}} \qquad \textup{for all} \qquad t\in(0,T).
\]
\item\textup{(Improved monotonicity in the mushy region)} If $\xiw\neq\xii$, then $H$ is strictly decreasing and smooth in $[\xiw,\xii]$.
\end{enumerate}
\end{theorem}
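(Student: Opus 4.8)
\textit{Plan overview.} The plan is to read both interphase points off the temperature profile $U:=\Phi_2(H)$, whose monotonicity and continuity come for free from Theorem~\ref{thm:ssprofileprop}, and then to pin them strictly inside $\{\xi>0\}$ by comparison with the antisymmetric solution of Theorem~\ref{thm:DiscontinuousSolutionOf2-phase}.

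\textit{Step 1 (shape of $U$ and item (a)).} First I would fix the shape of $U$. By Theorem~\ref{thm:ssprofileprop} the profile $H$ is nonincreasing with $H(-\infty)=L+P_1$, $H(+\infty)=-P_2$, and $U\in C_\textup{b}(\R)$. Since $\Phi_2$ is nondecreasing, $U$ is continuous and nonincreasing with $U(-\infty)=P_1>0$ and $U(+\infty)=-P_2<0$; and since $\Phi_2$ is strictly increasing off the flat interval $[0,L]$ one has $U>0\Leftrightarrow H>L$, $\;U=0\Leftrightarrow 0\le H\le L$, $\;U<0\Leftrightarrow H<0$. I therefore set $\xiw:=\sup\{\xi:U(\xi)>0\}$ and $\xii:=\inf\{\xi:U(\xi)<0\}$. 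Continuity, monotonicity and the limits of $U$ make these finite, unique, and ordered $\xiw\le\xii$, identify the mushy set $\{U=0\}=\{0\le H\le L\}$ with $[\xiw,\xii]$, and yield exactly item (a): $H(\xi)>L$ for $\xi<\xiw$ with $H(\xiw^-)=L$, and $H(\xi)<0$ for $\xi>\xii$ with $H(\xii^+)=0$. The self-similar form $h(x,t)=H(xt^{-1/(2s)})$ then promotes $\xiw,\xii$ to the advancing free boundaries $x_\textup{w}(t)=\xiw t^{1/(2s)}$ and $x_\textup{i}(t)=\xii t^{1/(2s)}$.

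\textit{Step 2 (the main obstacle: $\xiw>0$).} The soft half is easy: comparing with the antisymmetric profile $\hat H$ of Theorem~\ref{thm:DiscontinuousSolutionOf2-phase} for $P=P_2$, whose data is $L+P_2$ on $\{x\le0\}$ and $-P_2$ on $\{x>0\}$, the hypothesis $P_1>P_2$ gives $h_0\ge\hat h_0$, hence $H\ge\hat H$ by comparison (Theorem~\ref{thm:genproperties}\eqref{thm:genproperties-item1}); since $\hat H>L$ on $\{\xi<0\}$ this already gives $\xiw\ge0$ and $\xii\ge\xiw\ge0$. Upgrading to the \emph{strict} inequality $\xiw>0$ is the main obstacle and is where the genuinely nonlocal input enters: a plain comparison is not enough, since $\xiw=0$ is comparison-consistent (the antisymmetric lower bound only forces $H\ge\hat H$, and $\hat U<0$ to the right of $0$). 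I would exclude $\xiw=0$ by a contradiction argument, combining the strictness of the comparison for $t>0$ (strict positivity of the fractional heat kernel) with the transversal crossing $\hat U(\xi)=-C\xi+O(|\xi|^2)$ near $0$ from Theorem~\ref{thm:specialsolatinterph}, and then reading the profile equation \eqref{eq:SSE}, i.e. $H'(\xi)=\tfrac{2s}{\xi}\FL U(\xi)$, near $\xi=0^+$: assuming $\xiw=0$ forces an incompatibility between the singular behaviour of $\FL U$ generated by the positive (water) tail on the left and the requirement that a \emph{stationary} water interface survive against the strict heat surplus $P_1-P_2>0$. Quantifying this balance is the hard, original part of the proof.

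\textit{Step 3 (item (b)).} Finally, assuming $\xiw<\xii$, I work on the open mushy interval, where $U\equiv0$ and $\xi>0$. There the kernel singularity disappears, so that
\[
\FL U(\xi)=-\int_{\{U\neq0\}}\frac{U(\eta)}{|\xi-\eta|^{1+2s}}\dd\eta,\qquad \xi\in(\xiw,\xii),
\]
with $\{U\neq0\}=(-\infty,\xiw)\cup(\xii,+\infty)$ kept at positive distance from $\xi$; differentiating under the integral shows $\FL U$ is real-analytic on $(\xiw,\xii)$, whence by \eqref{eq:SSE} and $\xi>0$ the profile $H$ is smooth there. For strict monotonicity I would rule out a flat piece of the (already nonincreasing) $H$: if $H'\equiv0$ on a subinterval, then $\FL U\equiv0$ on it, hence — by the real-analyticity just established — on all of $(\xiw,\xii)$, so $U$ and $\FL U$ vanish simultaneously on a nonempty open set; the unique continuation property of $\FL$ then forces $U\equiv0$, contradicting $U(-\infty)=P_1>0$. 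Thus $H$ has no flat subinterval and is strictly decreasing on $[\xiw,\xii]$, which completes item (b).
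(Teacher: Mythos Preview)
Your Step~1 and the soft half of Step~2 (comparison with the antisymmetric solution to get $\xiw\ge0$) are correct and coincide with the paper's argument.

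\medskip
\noindent\textbf{The genuine gap.} In Step~2 the strict inequality $\xiw>0$ is not just ``hard to quantify'' with your ingredients --- it actually fails with them when $s<1/2$. The only quantitative input you name is the linear crossing $\hat U(\xi)=-C\xi+O(|\xi|^2)$ from Theorem~\ref{thm:specialsolatinterph}, which via comparison yields $U(\eta)\gtrsim |\eta|$ for $\eta<0$ near $0$. Feeding this into the profile equation on the putative mushy interval $(0,\xii)$ gives $-\FL U(\xi)\gtrsim |\xi|^{1-2s}$ and hence
\[
H(0^+)-H(\xi_2)\ \gtrsim\ \int_0^{\xi_2}\frac{|\eta|^{1-2s}}{\eta}\dd\eta=\int_0^{\xi_2}|\eta|^{-2s}\dd\eta,
\]
which diverges only for $s\ge 1/2$; for $s<1/2$ there is no contradiction. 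You also do not separate out the case $\xiw=\xii=0$, where there is no mushy interval to work on at all.

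The paper closes both holes. It first rules out $\xiw=\xii=0$ directly (Lemma~\ref{lemma5.1}): in that configuration $u$ solves the \emph{free} fractional heat equation, and the kernel representation gives $U(0)=(P_1-P_2)\int_{-\infty}^0\mathcal P_s>0$, a contradiction. Then, assuming $\xiw=0<\xii$, it upgrades the linear bound to the sharper $U(\eta)\ge c|\eta|^s$ for $\eta<0$ near $0$ (note $|\eta|^s\gg|\eta|$ there). This is done by writing $U=\tilde U+\hat U$ on $(-\infty,0)$, where $\hat U$ is $s$-harmonic with exterior data $g=U|_{[0,\infty)}$ (Poisson-kernel formula gives $\hat U(\xi)=-C_2|\xi|^s+O(|\xi|^{1+s})$) and $\tilde U$ solves the fractional heat equation in the half-line with zero exterior data and constant datum $P_1$ (so $\tilde U(\xi)\ge C_1|\xi|^s$). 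The strict inequality $C_1>C_2$ is then obtained by a perturbation argument in $P_1$ (compare with $P_1^*=P_1-\veps$ and use the scaling $\tilde U_{P_1}=\tfrac{P_1}{P_1-\veps}\tilde U_{P_1^*}$). With $U\gtrsim|\eta|^s$ one gets $-\FL U(\xi)\gtrsim|\xi|^{-s}$ on $(0,\xii)$ and the integral $\int_0^{\xi_2}\eta^{-1-s}\dd\eta$ diverges for every $s\in(0,1)$, forcing $H(0^+)=+\infty$ and the desired contradiction. None of this is visible from the antisymmetric comparison alone.

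\medskip
\noindent\textbf{On Step~3.} Your argument for item~(b) is correct but genuinely different from the paper's. You pass through real-analyticity of $\FL U$ on $(\xiw,\xii)$ and then invoke the strong unique continuation property of $\FL$ (if $U$ and $\FL U$ vanish on an open set then $U\equiv0$). The paper instead argues elementarily: on a flat subinterval one has $U=0$ and $\FL U=0$ pointwise, and a direct kernel inequality (comparing $-\FL U$ at two interior points, using that $U>0$ to the left and $U<0$ to the right) yields the self-contradiction $0>0$. Your route is cleaner but imports a nontrivial outside tool; the paper's is self-contained.
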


The proof of Theorem \ref{thm:UniqueInterphasePoints} will be divided into two parts. In the first part we will prove everything except the fact that $\xiw>0$, obtaining only $\xiw\geq0$. The analysis for the strict inequality requires more refined and elaborate arguments.

\begin{proof}[Proof of Theorem \ref{thm:UniqueInterphasePoints} (Part 1)]
We do not prove the results in the order stated.

\noindent\textbf{1)} {\sl Interphase points.}  Recall that $U$ is continuous, nonincreasing, and has limits $P_1$ and $-P_2$ at $-\infty$ and $+\infty$. Then, if we define the \sl mushy region \rm as the set
\[
M:=\{\xi\in \R \, : \, U(\xi)=0\}=\{\xi\in \R \, : \, 0\leq H(\xi)\leq L\},
\]
this is either a closed finite interval $M=[\xiw,\xii]$ or just a point when $\xiw=\xii$.  If $M$ is just one point, then of course there is a unique interphase point for the ice and water region with no mushy region. Note that in the already studied case $P_1=P_2$, we are back to the setting of Theorem \ref{thm:DiscontinuousSolutionOf2-phase} where $\xiw=\xii=0$ and there is no mushy region.

By the definition of $M$, $U>0$ in $(-\infty, \xiw)$, which implies that $H=U+L$ is continuous in $(-\infty, \xiw)$ and $H(\xiw^-)=L$. Similarly, one gets that $H(\xii^+)=0$ and $H(\xi)<0$ for  $\xi>\xii$.

\smallskip

\noindent\textbf{2)} {\sl $H$ is strictly decreasing in $[\xiw, \xii]$.} Assume the contrary. Then there exists $\xi_1,\xi_2\in(\xiw,\xii)$ with $\xi_1<\xi_2$ such that $H'(\xi)=0$ and $U(\xi)=0$ for all $\xi\in(\xi_1,\xi_2)$.
%We also define $\hat{\xi}=(\xi_1+\xi_2)/2$.
Moreover, using the profile equation \eqref{eq:SSE} we get that $\FL U(\xi)=0$ for all $\xi\in(\xi_1,\xi_2)$. Summing up, without loss of generality (by translation and scaling), we can assume that $\xi_1=-1$ and $\xi_2=1$ and thus
\[
\FL U=0 \quad \textup{and} \quad U=0 \quad \textup{in} \quad (-1,1).
\]
with $U(\xi)\geq0$ for $\xi\leq -1$ and $U(\xi)\leq0$ for $\xi\geq 1$. Since $U$ is continuous and takes the limits $P_1$ and $-P_2$ at $-\infty$ and $+\infty$, there exist $a,b\geq 1$ such that $U(\xi)>0$ if $\xi\in(-\infty,-a)$, $U(\xi)=0$ if $\xi \in [-a,b]$ and $U(\xi)<0$ if $\xi\in(b,+\infty)$. Thus,
\[
\begin{split}
0&=-\FL U(0)=\int_{-\infty}^{-a} \frac{U(\eta)}{(-\eta)^{1+2s}}\dd\eta +\int_{b}^{+\infty} \frac{U(\eta)}{\eta^{1+2s}}\dd\eta\\
&> \int_{-\infty}^{-a} \frac{U(\eta)}{(\frac{1}{2}-\eta)^{1+2s}}\dd\eta +\int_{b}^{+\infty} \frac{U(\eta)}{\eta^{1+2s}}\dd\eta> \int_{-\infty}^{-a} \frac{U(\eta)}{(\frac{1}{2}-\eta)^{1+2s}}\dd\eta +\int_{b}^{+\infty} \frac{U(\eta)}{(\eta-\frac{1}{2})^{1+2s}}\dd\eta\\
&=-\FL U\Big(\frac{1}{2}\Big)=0
\end{split}
\]
which is a contradiction. The argument for smoothness inside this region is the same as in the one-phase problem.

\smallskip

\noindent\textbf{3)} {\sl Unique interphase points.} By strict monotonicity in $[\xiw,\xii]$, we have that for small enough $\rho>0$, $H(\xiw+\rho)<H(\xiw)\leq H(\xiw^-)=L$, which proves that the only interphase point with the water region can be $\xiw$. A similar argument shows that $\xii$ is the only interphase point with the ice region.

\smallskip

\noindent\textbf{4)} {\sl The interphase points are nonnegative: $\xiw\geq0$.} Consider the solution $\tilde{h}$ of \eqref{eq:2-phaseStef} with initial data $\tilde{h}_0$ given as $h_0$ with $P_1=P_2$ (cf. Theorem \ref{thm:DiscontinuousSolutionOf2-phase}). By comparison, we get that $h\geq \tilde{h}$ or $H(\xi)=h(\xi,1)\geq \tilde{h}(\xi,1)$ for a.e. $\xi\in\R$. Since $\tilde{h}(0^-,1)=L$, the continuity of $(H(\xi)-L)_+$ gives $H(0^-)\geq L$. Now, if $H(0^-)> L$, then monotonicity and continuity gives $\xiw>0$, and if $H(0^-)=L$, then Step 3) gives that $\xiw=0$. Note also that Theorem \ref{thm:specialsolatinterph} gives that $H(\xi)\gtrsim C|\xi|$ near the origin for $\xi<0$, and this will be used below.
\end{proof}

Now, we improve the information about the interphase points $\xiw,\xii$. The only thing left to show in Theorem \ref{thm:UniqueInterphasePoints} is the following result.

\begin{proposition}[Strict positivity]\label{thm:positiveinterphases}
Under the running assumptions, we have  $0<\xiw\le \xii$.
\end{proposition}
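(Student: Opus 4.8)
The plan is to argue by contradiction. By Part 1 we already know $0\le\xiw\le\xii<+\infty$, so it suffices to exclude $\xiw=0$; this hypothesis forces $U(0)=0$ (the origin lies in the mushy set $M$) while $U>0$ on $(-\infty,0)$. To exploit the asymmetry $P_1>P_2$, I would compare with the antisymmetric solution $\tilde h$ of Theorem \ref{thm:DiscontinuousSolutionOf2-phase} associated with $P=P_2$: its initial datum satisfies $\tilde h_0\le h_0$, so comparison (Theorem \ref{thm:genproperties}\eqref{thm:genproperties-item1}) gives $\tilde h\le h$ and hence $\tilde U\le U$ for the temperature profiles, where $\tilde U$ solves the fractional heat profile equation, is smooth and antisymmetric, and in particular satisfies $\FL\tilde U(0)=0$. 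I also recall from Step 4 of Part 1 and Theorem \ref{thm:specialsolatinterph} the linear lower bound $U(\xi)\ge\tilde U(\xi)\gtrsim C|\xi|$ as $\xi\to0^-$.

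Suppose first that $\xii>0$. The core of the argument is to evaluate the profile equation \eqref{eq:SSE}, namely $\FL U(\xi)=\tfrac{1}{2s}\xi H'(\xi)$, at points $\hat\xi\in(0,\xii)$ of the open mushy interval. There $U\equiv0$ on a full neighbourhood, so $\FL U(\hat\xi)$ is an absolutely convergent integral with no principal value:
\[
\FL U(\hat\xi)=-c_{1,s}\Big(\int_{-\infty}^0\frac{U(\eta)}{(\hat\xi+|\eta|)^{1+2s}}\dd\eta+\int_{\xii}^{+\infty}\frac{U(\eta)}{|\hat\xi-\eta|^{1+2s}}\dd\eta\Big),
\]
where the first (water) integral is positive and the second (ice) integral is negative. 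On one hand, since $H$ is smooth up to $\xiw=0$ on $[0,\xii]$ by Theorem \ref{thm:UniqueInterphasePoints}(b), the right-hand side $\tfrac{1}{2s}\hat\xi H'(\hat\xi)\to0$ as $\hat\xi\to0^+$. On the other hand, by monotone and dominated convergence the two integrals converge to their values at $\hat\xi=0$. When $s\ge1/2$ the linear lower bound makes $\int_{-\infty}^0 U(\eta)|\eta|^{-1-2s}\dd\eta=+\infty$, so $\FL U(\hat\xi)\to-\infty$, contradicting the limit $0$. When $s<1/2$ both limits are finite, and I would compare them through the antisymmetric balance $\int_{-\infty}^0\tilde U(\eta)|\eta|^{-1-2s}\dd\eta=\int_0^{+\infty}(-\tilde U(\eta))\eta^{-1-2s}\dd\eta$: since $U\ge\tilde U$ with strict inequality on a set of positive measure (because $U-\tilde U\to P_1-P_2>0$ at $-\infty$), the water integral strictly exceeds the ice integral, so $\FL U(0^+)<0$, again contradicting $0$.

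It remains to exclude $\xiw=\xii=0$, i.e.\ an interphase pinned at the origin with no mushy region. Here I would use the splitting $U=\tilde U+W$ with $W:=U-\tilde U\ge0$ and $W(0)=0$, so that the origin is a global minimum of $W$. Since $\FL\tilde U(0)=0$ and the downward jump of $H$ at $0$ contributes a Dirac mass that is annihilated by the factor $\xi$ in \eqref{eq:SSE}, the profile equation reduces at the origin to $\FL U(0)=\FL W(0)=-c_{1,s}\int_{\R}W(\eta)|\eta|^{-1-2s}\dd\eta$, which is strictly negative (possibly $-\infty$) because $W\not\equiv0$; this again contradicts the value $0$ forced by the equation. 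Combining the three cases yields $\xiw>0$, and with Part 1, $0<\xiw\le\xii$.

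The step I expect to be the main obstacle is the rigorous reconciliation of the pointwise or limiting value of the nonlocal operator $\FL U$ at the interphase with the merely distributional profile equation across the corner (and possible enthalpy jump) at $\xiw$: one must justify evaluating $\FL U$ up to the free boundary, control the principal value in the no-mushy case, and deal with the genuine blow-up of $\FL U$ when $s\ge1/2$. The linear lower bound $U(\xi)\gtrsim C|\xi|$ for $\xi<0$ supplied by Theorem \ref{thm:specialsolatinterph} is precisely what drives this blow-up (and the strict sign for $s<1/2$), so quantifying its interplay with the geometry of the mushy set is the delicate point.
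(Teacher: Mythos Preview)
Your argument has a circular step. In the case $\xii>\xiw=0$ you claim $\tfrac{1}{2s}\hat\xi H'(\hat\xi)\to 0$ by invoking smoothness of $H$ on the \emph{closed} interval $[\xiw,\xii]$. But Part~1 only yields smoothness on the open interval $(\xiw,\xii)$ (the appeal to the one-phase argument is interior regularity), and more to the point, the profile equation \eqref{eq:SSE} is the identity $\FL U(\hat\xi)=\tfrac{1}{2s}\hat\xi H'(\hat\xi)$ on $(0,\xii)$: whatever limit the left side has, the right side has the same one. There is no independent reason for ``RHS $\to 0$'' unless you bound $H'$ from elsewhere, and your own computation shows $\FL U(\hat\xi)\to-\infty$ when $s\ge 1/2$, which through the identity forces $H'(\hat\xi)\to-\infty$; so the very smoothness you invoke is what fails. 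The paper avoids this by \emph{integrating} the profile equation: from $H(\xi_1)=H(\xi_2)-2s\int_{\xi_1}^{\xi_2}\eta^{-1}\FL U(\eta)\dd\eta$ and $H\le L$ one gets a contradiction once $\int_0^{\xi_2}\eta^{-1}(-\FL U(\eta))\dd\eta=+\infty$, and no endpoint control of $H'$ is needed.

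If you make that one change, your estimates actually close for every $s$: for $s\ge 1/2$ the linear bound $U(\eta)\ge C|\eta|$ gives $-\FL U(\xi)\to+\infty$, while for $s<1/2$ your antisymmetric balance shows $\liminf_{\xi\to0^+}(-\FL U(\xi))>0$, and in either case $\int_0^{\xi_2}\eta^{-1}(-\FL U(\eta))\dd\eta$ diverges. This is in fact more economical than the paper's route, which proves the stronger boundary bound $U(\eta)\ge c|\eta|^s$ near $0^-$ by splitting $U$ into an $s$-harmonic part (Poisson kernel on the half-line) and a half-line heat part, and then runs a perturbation argument in $P_1$ to get a strict coefficient inequality. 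Finally, your handling of the no-mushy case $\xiw=\xii=0$ is formal: evaluating \eqref{eq:SSE} at the jump, invoking $\xi\delta_0=0$, and taking a principal value $\FL\tilde U(0)=0$ all need justification (for $s\ge 1/2$ neither $\FL\tilde U(0)$ nor $\FL W(0)$ is absolutely convergent, and you have no rate for $W$ at $0$). The paper's Lemma~\ref{lemma5.1} disposes of this case directly: with no mushy region $u=\Phi_2(h)$ solves the linear fractional heat equation with step data $P_1\mathbf{1}_{x\le0}-P_2\mathbf{1}_{x>0}$, and the kernel representation gives $U(0)=\tfrac12(P_1-P_2)>0$, contradicting $U(0)=0$.
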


The proof is  divided into a series of lemmas.

\begin{lemma}\label{lemma5.1} Under the running assumptions, $\xii>0$.
\end{lemma}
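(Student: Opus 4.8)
The plan is to establish $\xii > 0$ by a contradiction argument that exploits the asymmetry $P_1 > P_2$ together with the nonlocal structure of the operator. Recall from Part 1 of the proof of Theorem \ref{thm:UniqueInterphasePoints} that $\xii = \sup\{\xi : U(\xi) = 0\}$ is finite and nonnegative, and that $U$ is nonincreasing with limits $P_1$ at $-\infty$ and $-P_2$ at $+\infty$. Suppose, toward a contradiction, that $\xii = 0$. Then $\xiw \le \xii = 0$ forces $\xiw = 0$ as well, so the mushy region degenerates to the single point $\{0\}$, $U(\xi) > 0$ for all $\xi < 0$, and $U(\xi) < 0$ for all $\xi > 0$, with $U(0) = 0$ by continuity.

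First I would evaluate $\FL U(0)$ directly using the integral representation and compare it against the antisymmetric reference solution from Theorem \ref{thm:DiscontinuousSolutionOf2-phase}. Writing
\[
-\FL U(0) = \int_{-\infty}^{0} \frac{U(\eta)}{|\eta|^{1+2s}}\dd\eta + \int_{0}^{+\infty}\frac{U(\eta)}{|\eta|^{1+2s}}\dd\eta,
\]
the key observation is that the profile equation \eqref{eq:SSE} evaluated at $\xi = 0$ reads
\[
H'(0)= -2s\, \frac{\FL U(0)}{0},
\]
which is degenerate; so instead I would extract information from the sign of $\FL U$ near the origin, in the spirit of Step 3) in the proof of Theorem \ref{thm:DiscontinuousSolutionOf2-phase}. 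The plan is to show that the asymmetry $P_1 > P_2$ makes the positive contribution from $(-\infty,0)$ strictly dominate the negative one from $(0,+\infty)$, forcing $\FL U(0) < 0$ and hence $H'(0) > 0$ by \eqref{eq:SSE}, contradicting monotonicity. To make this rigorous I would compare $U$ against the antisymmetric solution $\tilde U$ associated with $P_1 = P_2 := P_2$: by comparison (Theorem \ref{thm:ssprofileprop} and Proposition \ref{prop:1-phaseBounds2-phase}) one has $U \ge \tilde U$ everywhere, with strict inequality on $(-\infty, 0)$ because the larger datum $P_1 > P_2$ propagates. Since $\tilde U$ is antisymmetric and satisfies $\FL \tilde U(0) = 0$, the strict comparison on the left half-line yields $\FL U(0) < \FL \tilde U(0) = 0$.

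The main obstacle will be justifying the strict comparison at the level of the nonlocal integral and handling the singularity of the kernel at $\eta = 0$. Near $\xi = 0$ the integrand is singular, so I cannot simply split the integral naively; instead I would use the behaviour $U(\xi) \sim -C|\xi|$ near the origin coming from the $C^{1}$-type bound $H(\xi) \gtrsim C|\xi|$ established at the end of Part 1, which guarantees the principal value integral converges and that the singular parts from the two half-lines cancel to leading order. The surplus $U - \tilde U \ge 0$, concentrated on the left, then contributes a strictly negative amount to $\FL U(0)$ once integrated against the positive kernel $|\eta|^{-(1+2s)}$, and crucially this surplus is \emph{bounded away from the singularity} (it lives where $|\eta|$ is order one, since near the origin $U$ and $\tilde U$ share the same linear profile), so no delicate cancellation is lost. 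Finally, deriving the contradiction requires care: from $\FL U(0) < 0$ and the stationary equation \eqref{eq:SSE} I would conclude $H$ is strictly increasing through $\xi = 0$, which is incompatible with $H$ being nonincreasing (Theorem \ref{thm:ssprofileprop}\eqref{thm:ssprofileprop-item1}); hence $\xii > 0$.
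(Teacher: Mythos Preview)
Your approach has a genuine gap. The crux of your argument is to evaluate $\FL U(0)$ and then feed it into the profile equation at $\xi=0$, but neither step is justified in the contradictory scenario $\xiw=\xii=0$. You assert that ``near the origin $U$ and $\tilde U$ share the same linear profile'' so that the surplus $U-\tilde U$ lives away from the singularity; however, nothing you have available tells you that $U$ is $C^1$ (or even H\"older of the right order) at $0$. The linear behaviour $U(\xi)=-C\xi+O(\xi^2)$ was proved in Theorem~\ref{thm:specialsolatinterph} \emph{only} for the antisymmetric solution $\tilde U$, and the bound $H(\xi)\gtrsim C|\xi|$ for $\xi<0$ recorded at the end of Part~1 is a one-sided lower bound coming from comparison with $\tilde H$; it does not give you an upper bound on $U$ or any control on $U$ for $\xi>0$. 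Without two-sided regularity of $U$ at $0$, the principal-value integral defining $\FL U(0)$ may fail to converge (the individual half-line integrals diverge for $s\ge 1/2$), and the subtraction $\FL U(0)-\FL\tilde U(0)$ cannot be performed termwise. Likewise, the conclusion ``$H'(0)>0$'' from $\FL U(0)<0$ is formal: the profile equation gives $\xi H'(\xi)=2s\,\FL U(\xi)$, so at $\xi=0$ you need either $H'$ bounded (to get $0=\FL U(0)$, a contradiction) or a limit argument as $\xi\to 0^-$, both of which again require regularity you have not established.

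The paper avoids all of this with a much more direct idea: under the hypothesis $\xiw=\xii=0$ one has $h>L$ for $x<0$ and $h<0$ for $x>0$, so $u=\Phi_2(h)$ equals $h-L$ on the left and $h$ on the right; plugging this into the very weak formulation (exactly as in the proof of Theorem~\ref{thm:specialsolatinterph}) shows that $u$ is the very weak solution of the \emph{linear} fractional heat equation with initial datum $u_0=P_1\mathbf{1}_{x\le 0}-P_2\mathbf{1}_{x>0}$. Then the explicit convolution with the heat kernel $\mathcal P_s$ and its symmetry give
\[
U(0)=u(0,1)=(P_1-P_2)\int_{-\infty}^{0}\mathcal P_s(y,1)\dd y>0,
\]
contradicting $U(0)=0$. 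This bypasses any question about $\FL U(0)$ or pointwise validity of \eqref{eq:SSE} at the origin, and the asymmetry $P_1>P_2$ enters through a single, transparent inequality.
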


\begin{proof}
Note that we know that $0\leq \xiw\leq \xii$ by Part 1 of the proof of Theorem \ref{thm:UniqueInterphasePoints}. Assume by contradiction that $\xiw=\xii=0$. Since the interphase points are unique, we proceed as in the proof of Theorem \ref{thm:specialsolatinterph} to show that  $u=\Phi_2(h)$ is the unique very weak solution of the fractional heat equation with initial data
\begin{equation*}
u_0(x):=\begin{cases}
P_1  \quad\quad&\textup{if}\quad   x\leq0,\\
-P_2  \quad\quad&\textup{if}\quad   x>0.
\end{cases}
\end{equation*}
Finally, we note, by symmetry of the heat kernel $\mathcal{P}_s$ in the first variable and the fact that $P_1>P_2$ we have that
\[
U(0)=u(0,1)=P_1\int_{-\infty}^0 \mathcal{P}_s(-y,1) \dd y- P_2 \int_{0}^{+\infty} \mathcal{P}_s(-y,1) \dd y= (P_1-P_2)\int_{-\infty}^0 \mathcal{P}_s(y,1) \dd y>0,
\]
which is a contradiction with our assumption.
\end{proof}

\begin{proof}[Proof of Proposition \ref{thm:positiveinterphases}]
We only need to prove that $\xiw>0$. Assume by contradiction that $\xiw=0$. Then, by Lemma \ref{lemma5.1} we have that $\xii>\xiw=0$. Since $U(\xi)=0$ for $\xi \in [0, \xii]$, then $U\in C^\infty((0,\xii))$. Consequently equation \eqref{eq:SSE} is satisfied in the pointwise sense and $H\in C^\infty((0,\xii))$. From \eqref{eq:SSE} it is trivial to get the following estimate, valid for any $\xi_1,\xi_2 \in (0, \xii)$ with $\xi_1<\xi_2$:
\begin{equation}\label{eq:estfromSSP}
H(\xi_1)= H(\xi_2) - 2s \int_{\xi_1}^{\xi_2}  \frac{\FL U(\eta)}{\eta}\dd \eta
\end{equation}
Assume for a while that we know that
\begin{equation}\label{eq:behafrebbwater}
U(\eta)\geq c|\eta|^s \qquad \textup{for $\eta<0$ close enough to $0$.}
\end{equation}
Then, for $\xi\in(0, \xii)$ close enough to $0$ we have that
\begin{equation*}
\begin{split}
-\FL U(\xi)&\simeq \int_{\R} \frac{U(\eta)-U(\xi)}{|\eta-\xi|^{1+2s}}\dd \eta = \int_{-\infty}^0  \frac{U(\eta)}{|\eta-\xi|^{1+2s}}\dd \eta  +  \int_{\xii}^{+\infty}  \frac{U(\eta)}{|\eta-\xi|^{1+2s}}\dd \eta\\
&\gtrsim \int_{-2\xi}^{-\xi}  \frac{U(\eta)}{|\eta-\xi|^{1+2s}}\dd \eta  -P_2  \int_{\xii}^{+\infty}  \frac{\dd \eta}{|\eta-\xi|^{1+2s}}\dd \eta\\
&\gtrsim \int_{-2\xi}^{-\xi}  \frac{|\eta|^s}{|\eta-\xi|^{1+2s}}\dd \eta  -P_2  |\xii-\xi|^{-2s} \gtrsim |\xi|^{-s}.
\end{split}
\end{equation*}
Finally, taking limits as $\xi_1\to0^+$ in \eqref{eq:estfromSSP} we get
\[
L\geq \lim_{\xi_1\to0^+} H(\xi_1)= H(\xi_2) +  2s \int_{0}^{\xi_2}  \frac{-\FL U(\eta)}{\eta}\dd \eta \gtrsim 0+ 2s \int_{0}^{\xi_2}  \frac{\eta^{-s}}{\eta}\dd \eta\simeq \int_0^{\xi_2} \frac{\dd \eta}{|\eta|^{1+s}}=+\infty,
\]
which is a contradiction and shows that $\xiw>0$.

The only thing left to show is assertion \eqref{eq:behafrebbwater} holds if $\xiw=0$. We will do it in a series of steps.

\noindent \textbf{1)}  Let us consider $g:[0,+\infty)\to\R$ as an auxiliary function defined as $g:=U$, the known continuous and bounded solution. By equation \eqref{eq:SSE} we get that $U$ satisfies the following external boundary value problem
\begin{equation*}
\begin{cases}
\FL U(\xi)=\frac{1}{2s}\xi H'(\xi)  \quad\quad&\textup{for}\quad   \xi \in(-\infty,0),\\
U(\xi)=g(\xi) \quad\quad&\textup{for}\quad   \xi \in[0,+\infty).
\end{cases}
\end{equation*}
Recall that
\[
g(\xi)=0 \quad \textup{for}\quad   \xi \in[0,\xii] \quad \textup{and}\quad 0>g(\xi)\geq-P_2 \quad \textup{for}\quad   \xi \in(\xii,+\infty).
\]
Note that, by linearity, we have that $U=\tilde{U}+\hat{U}$ where
\begin{equation*}
\begin{cases}
\FL \tilde{U}(\xi)=\frac{1}{2s}\xi H'(\xi)  \quad\quad&\textup{for}\quad   \xi \in(-\infty,0),\\
\tilde{U}(\xi)=0\quad\quad&\textup{for}\quad   \xi \in[0,+\infty),
\end{cases}
\end{equation*}
and
\begin{equation*}
\begin{cases}
\FL \hat{U}(\xi)=0  \quad\quad&\textup{for}\quad   \xi \in(-\infty,0),\\
\hat{U}(\xi)=g(\xi) \quad\quad&\textup{for}\quad   \xi \in[0,+\infty).
\end{cases}
\end{equation*}

\smallskip
\noindent \textbf{2)}
Using the Poisson Kernel for the fractional Laplacian in the half-space (cf. \cite{Ab-etal19}), we get for $\xi<0$ close enough to $0$ that
\begin{equation}\label{eq:asymptuhatpoisson}
\hat{U}(\xi) = -|\xi|^s c_s \int_{\xii}^{+\infty}\frac{1 }{ |\eta|^s |\xi-\eta|}|g(\eta)|\dd \eta=  -C_2 |\xi|^s + E(\xi).
\end{equation}
where $C_2=c_s\int_{\xii}^{+\infty} |\eta|^{-1-s}|g(\eta)|\dd\eta<+\infty$
%, $E(\xi)>0$ for all $\xi<0$
and $E=O(|\xi|^{1+s})$.

\smallskip

 \noindent \textbf{3)} Now we examine the solution $\tilde{U}(\xi)$ defined and positive for $\xi<0$ with $\tilde{U}(\xi)=0$ for $\xi\geq0$. From the proof of Lemma 3.19 in \cite{dTEnVa19} it follows  that $\tilde{U}(\xi)\geq C_1|\xi|^s$ for $\xi<0$ close enough to $0$. We also point out that $U'=H'$ in $(-\infty,0)$ and thus $\tilde{u}(x,t):=\tilde{U}(xt^{1/(2s)})$ is the solution of the fractional heat equation in $(-\infty,0)$ with zero external data in $[0,+\infty)$.

Note also that $\hat{U}(\xi)\to0$ as $\xi\to-\infty$ (use \eqref{eq:asymptuhatpoisson}) and thus, for $x<0$ we have
 \[
 \tilde{u}(x,0) = \lim_{t\to0^+} \tilde{u}(x,t)=\lim_{t\to0^+} \tilde{U}(xt^{1/(2s)})=\lim_{\xi\to-\infty} \tilde{U}(\xi)= \lim_{\xi\to-\infty}\left(U(\xi) -\hat{U}(\xi)\right)=P_1.
 \]

\noindent \textbf{4)} We need to prove that $C_1> C_2.$ The case $C_1< C_2$ is immediately excluded, since in that case
\[
0 \leq U(\xi)= \tilde{U}(\xi)+\hat{U}(\xi)\leq (C_1 - C_2) |\xi|^s +O(|\xi|^{1+s})<0 \quad \textup{for $|\xi|$ small enough},
\]
which is a contradiction. This argument holds for all $s$ and all $P_1>P_2$ and implies that $C_1\ge C_2$.

\noindent \textbf{5)} The strict inequality $C_1> C_2$ needs an extra argument done by approximation.
 For convenience, now we will use the notations: $U_{P_1}:=U$, $\xi_{\textup{i},P_1}:=\xii$ and $g_{P_1}:=g$ since the dependence on the parameter $P_1$ will be important. We fix $P_1$ and $P_2$ and pick  a value $P_1^*:=P_1-\veps>P_2$ for some small $\veps>0$. We then consider $U_{P_1^*}$, $\xi_{\textup{i},P_1^*}$, and $g_{P_1^*}$. By comparison, we have
\[
U_{P_1}\geq U_{P_1^*}, \quad \textup{and} \quad \xi_{\textup{i},P_1}\geq \xi_{\textup{i},P_1^*}.
\]
In particular, we have that $g_{{P_1}^*}(\xi)\le g_{{P_1}}(\xi)$ for $\xi>0$.
On one hand, this estimate implies that
\[
-C_2 |\xi|^s+O(|\xi|^{1+s})=\hat{U}_{P_1}\ge \hat{U}_{P_1^*}=-C_2^* |\xi|^s+O(|\xi|^{1+s})
\]
hence $C_2\le C_2^*$. On the other hand, from Step 3) we get that small $\xi<0$ we have
$$
\tilde{U}_{P_1*}(\xi)\ge  C^*_1 |\xi|^s,
$$

and  Step 4) for $U_{P_1^*}$ implies that $C_1^*\ge C^*_2$. The final point is to note that due to the initial data of the space-time version, $\tilde{U}_{P_1}(\xi)$ and $ \tilde{U}_{P_1^*}(\xi)$ are proportional
$\tilde{U}_{P_1}(\xi)=\lambda\,\tilde{U}_{P_1^*}(\xi)$ with $\lambda=(P_1/(P_1-\veps))>1$, so that $C_1>C_1^*$. We conclude that
\[
c:=C_1-C_2>C_1^*-C_2\geq C_1^*-C_2^*\geq0
\]
and thus,
$$
U(\xi)= \tilde{U}(\xi)+\hat{U}(\xi) \geq (C_1-C_2)|\xi|^s=c|\xi|^s \qquad \mbox{for all small}\quad  \xi<0
$$
for some $c>0$. This ends the proof.
\end{proof}

As a further result, we  prove the existence of a mushy region for $s= 1/2$.

\begin{theorem}\label{thm:mussyregion}
Let $s= 1/2$. Under the running assumptions,  we have that
\[0< \xiw < \xii.\]
\end{theorem}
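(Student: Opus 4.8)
The plan is to argue by contradiction. By Proposition \ref{thm:positiveinterphases} we already know $0<\xiw\le\xii$, so it suffices to exclude the equality $\xiw=\xii$. Suppose then that $\xiw=\xii=:\xi_0>0$, so that there is no mushy region: $H(\xi)>L$ (i.e.\ $U(\xi)>0$) for $\xi<\xi_0$, $H(\xi)<0$ (i.e.\ $U(\xi)<0$) for $\xi>\xi_0$, and $U(\xi_0)=0$. In space--time variables this says that $h(x,t)>L$ for $x<x_0(t)$ and $h(x,t)<0$ for $x>x_0(t)$, where $x_0(t)=\xi_0 t^{1/(2s)}$ is the now \emph{moving} interface.

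The first step is to reduce the equation for the temperature $u=\Phi_2(h)$ to a fractional heat equation with a source concentrated on the moving interface, extending the computation in the proof of Theorem \ref{thm:specialsolatinterph}. Writing $h=u+L\,\mathbf{1}_{\{x<x_0(t)\}}$ and inserting this into the very weak formulation, the only new contribution comes from $\int_0^T\int_{-\infty}^{x_0(t)}\partial_t\psi\,\dd x\,\dd t$, which by Leibniz's rule equals $-\int_{-\infty}^0\psi(x,0)\dd x-\int_0^T\psi(x_0(t),t)\,x_0'(t)\,\dd t$. After the cancellations exactly as in Theorem \ref{thm:specialsolatinterph}, one finds that $u$ is the very weak solution of
\[
\partial_t u+\FL u=-L\,x_0'(t)\,\delta_{x_0(t)}\quad\textup{in }\R\times(0,T),\qquad u(\cdot,0)=u_0,
\]
with $u_0=P_1\mathbf{1}_{\{x<0\}}-P_2\mathbf{1}_{\{x>0\}}$. (When $\xi_0=0$ the source vanishes and one recovers precisely the situation of Theorem \ref{thm:specialsolatinterph}, a useful consistency check.) By Duhamel's principle for the linear problem, $u=u_{\textup{init}}+u_{\textup{sc}}$, where $u_{\textup{init}}=\mathcal{P}_s(\cdot,t)\ast u_0$ is bounded and continuous for $t>0$, and
\[
u_{\textup{sc}}(x,t)=-L\int_0^t x_0'(\tau)\,\mathcal{P}_s\big(x-x_0(\tau),t-\tau\big)\dd\tau\le 0.
\]

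The heart of the argument, and the only place where $s=1/2$ enters, is the behaviour of $u_{\textup{sc}}$ on the interface. Using $\mathcal{P}_s(z,\sigma)=\sigma^{-1/(2s)}\mathcal{P}_s(z\sigma^{-1/(2s)},1)$ together with $x_0(t)-x_0(\tau)\sim x_0'(t)(t-\tau)$ as $\tau\to t^-$, the integrand of $u_{\textup{sc}}(x_0(t),t)$ behaves like $(t-\tau)^{-2s}$ when $s<1/2$, like $(t-\tau)^{-1/(2s)}$ when $s>1/2$, and precisely like $(t-\tau)^{-1}$ when $s=1/2$. The first two exponents are integrable near $\tau=t$, but the last is not: for $s=1/2$ one has $\mathcal{P}_{1/2}(z,\sigma)=\tfrac1\pi\tfrac{\sigma}{\sigma^2+z^2}$, so along the interface $x_0(\tau)=\xi_0\tau$ the integrand equals $\tfrac{\xi_0}{\pi(1+\xi_0^2)(t-\tau)}$ and $\int_0^t x_0'(\tau)\mathcal{P}_{1/2}(x_0(t)-x_0(\tau),t-\tau)\dd\tau=+\infty$ (logarithmic divergence).

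Finally I would turn this divergence into a contradiction with the continuity of the temperature. Fix $t>0$ and take $x_n\to x_0(t)$ with $x_n\neq x_0(t)$; for such $x_n$ the integral defining $u_{\textup{sc}}(x_n,t)$ is finite. Since the integrand is nonnegative, Fatou's lemma gives $\liminf_n\big(-u_{\textup{sc}}(x_n,t)\big)\ge L\int_0^t x_0'(\tau)\mathcal{P}_{1/2}(x_0(t)-x_0(\tau),t-\tau)\dd\tau=+\infty$, hence $u_{\textup{sc}}(x_n,t)\to-\infty$. As $u_{\textup{init}}$ is bounded and continuous, $u(x_n,t)\to-\infty$, which contradicts the fact that $u=\Phi_2(h)$ is continuous (Theorem \ref{thm:regularitygen}) with $u(x_0(t),t)=U(\xi_0)=0$. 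Therefore $\xiw=\xii$ is impossible, and combined with Proposition \ref{thm:positiveinterphases} we conclude $0<\xiw<\xii$. I expect the main obstacle to be the rigorous justification that the very weak formulation with the singular moving measure source indeed yields the Duhamel representation above (and that $u_{\textup{init}}$ carries no compensating singularity), so that the Fatou passage to the limit $x_n\to x_0(t)$ is legitimate; the kernel computation itself is elementary once the source has been identified, and it is exactly the logarithmic borderline at $s=1/2$ that makes the argument succeed in this case and fail for $s\neq 1/2$.
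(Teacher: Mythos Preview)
Your proposal is correct and follows essentially the same route as the paper: assume $\xiw=\xii=\xi_0>0$, show that $u=\Phi_2(h)$ then satisfies the fractional heat equation with a Dirac source of strength proportional to $x_0'(t)$ concentrated on the moving interface $x_0(t)=\xi_0 t^{1/(2s)}$, apply Duhamel, and observe that for $s=1/2$ the resulting integral diverges logarithmically along the interface. The only cosmetic differences are that the paper derives the source term via an explicit Fubini/change-of-variables computation (rather than Leibniz's rule) and reaches the contradiction by invoking the $L^\infty$ bound on $u$ directly, whereas you pass to a limit $x_n\to x_0(t)$ via Fatou and appeal to continuity of $u$; both closures are valid and rest on the same Duhamel estimate.
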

\begin{proof}
We only need to show that the configuration $0<\xiw=\xii$ is not possible when $s= 1/2$. Assume that $0<\xiw=\xii$. We follow first the ideas of the proof Theorem \ref{thm:specialsolatinterph} and show that $u$ satisfies a fractional heat equation, this time with a forcing term. More precisely (we take $T=1$ for simplicity), for $\psi \in C_{\textup{c}}^\infty(\R\times[0,1))$, we have that,

\begin{equation*}
\begin{split}
&\int_0^1 \int_{\R} h \partial_t \psi \dd x \dd t\\
&=\int_0^1 \int_{-\infty}^{\xiw t^{\frac{1}{2s}}}(h -L) \partial_t \psi  \dd x \dd t+\int_0^1 \int_{\xiw t^{\frac{1}{2s}}}^{+\infty} h  \partial_t \psi \dd x \dd t+ L \int_0^1 \int_{-\infty}^{\xiw t^{\frac{1}{2s}}}\partial_t \psi  \dd x \dd t\\
&= \int_0^1 \int_{\R} u \partial_t \psi  \dd x \dd t + L \int_0^1 \int_{-\infty}^{0}\partial_t \psi  \dd x \dd t + L \int_0^1 \int_{0}^{\xiw t^{\frac{1}{2s}}}\partial_t \psi  \dd x \dd t\\
&= \int_0^1 \int_{\R} u \partial_t \psi  \dd x \dd t - L \int_{-\infty}^0  \psi (x,0) \dd x + L\int_{0}^{\xiw} \int_{\left(\frac{x}{\xiw}\right)^{2s}}^1 \partial_t \psi \dd t \dd x
\end{split}
\end{equation*}
Note that, for the last term above, we have that
\[
\begin{split}
\int_{0}^{\xiw} \int_{\left(\frac{x}{\xiw}\right)^{2s}}^1 \partial_t \psi \dd t \dd x&=- \int_{0}^{\xiw} \psi(x, \left(x/\xiw\right)^{2s})  \dd x= -\frac{1}{2s} \int_{0}^1 \psi(\xiw  t^{\frac{1}{2s}}, t) t^{\frac{1}{2s}-1}\dd t \\
&= -\frac{1}{2s} \int_{0}^1 \int_\R \psi(x,t)  t^{\frac{1}{2s}-1} \dd \delta_{ \xiw  t^{\frac{1}{2s}}}(x) \dd t,
\end{split}
\]
where $\delta_{\xiw  t^{\frac{1}{2s}}}(x)$ denotes the Dirac delta measure in the variable $x$ at the point $\xiw  t^{\frac{1}{2s}}$. By using the above relations in the definition of very weak solution for $h$, we have that $u$ satisfies the following identity,
\[
\begin{split}
0=\int_0^1 \int_{\R} \big(u(x,t) \partial_t \psi(x,t) - u(x,t)\FL \psi(x,t)\big)\dd x \dd t &+\int_{\R} u_0(x) \psi(x,0)\dd x \\
-&\frac{L}{2s} \int_{0}^1 \int_\R \psi(x,t)  t^{\frac{1}{2s}-1} \dd \delta_{ \xiw  t^{\frac{1}{2s}}}(x) \dd t.
\end{split}
\]
This means that $u$ is the distributional solution of the following heat equation with a forcing term:
 \begin{equation*}
\begin{cases}
\dell_tu(x,t)+\FL u(x,t)=-\frac{L}{2s} \delta_{ \xiw  t^{\frac{1}{2s}}}(x)  t^{\frac{1}{2s}-1}    \quad\quad&\textup{if}\quad   (x,t)\in Q_T,\\
u(x,0)=u_0(x)  \quad\quad&\textup{if}\quad   x\in \R.
\end{cases}
\end{equation*}
Note that $u=\tilde{u}+\hat{u}$ where
\begin{equation*}
\begin{cases}
\dell_t\tilde{u}(x,t)+\FL \tilde{u}(x,t)=0  \quad\quad&\textup{if}\quad   (x,t)\in Q_T,\\
\tilde{u}(x,0)=u_0(x)  \quad\quad&\textup{if}\quad  x\in \R,
\end{cases}
\end{equation*}
and
\begin{equation*}
\begin{cases}
\dell_t\hat{u}(x,t)+\FL \hat{u}(x,t)= -\frac{L}{2s} \delta_{ \xiw  t^{\frac{1}{2s}}}(x)  t^{\frac{1}{2s}-1}    \quad\quad&\textup{if}\quad   (x,t)\in Q_T,\\
\hat{u}(x,0)=0  \quad\quad&\textup{if}\quad   x\in \R.
\end{cases}
\end{equation*}
On one hand, note that $\|\tilde{u}\|_{L^\infty(\R^N\times[0,1])}\leq \|u_0\|_{L^\infty(\R^N)}<+\infty$. On the other hand, we can use  Duhamel's representation formula to show that
\[
\begin{split}
|\hat{u}(x,t)| &= \frac{L}{2s} \int_0^t \int_{\R} \mathcal{P}_s(x-y,t-\tau)  \tau^{\frac{1}{2s}-1} \dd \delta_{ \xiw  \tau^{\frac{1}{2s}}}(y)\dd\tau \simeq \int_0^t \mathcal{P}_s(x-\xiw  \tau^{\frac{1}{2s}},t-\tau)  \tau^{\frac{1}{2s}-1}\dd\tau\\
&\asymp  \int_0^t  \frac{t-\tau}{((t-\tau)^{1/s}+|x-\xiw  \tau^{\frac{1}{2s}}|^2)^{(1+2s)/2}}  \tau^{\frac{1}{2s}-1}\dd\tau
\end{split}
\]
Using the above estimate in $x=\xiw t^{\frac{1}{2s}}$ for some $t>1/2$ we have that
\[
\begin{split}
|\hat{u}(x,t) |&\gtrsim \int_{1/2}^t \frac{t-\tau}{((t-\tau)^{1/s}+|\xiw t^{\frac{1}{2s}}-\xiw  \tau^{\frac{1}{2s}}|^2)^{(1+2s)/2}}\dd \tau
\end{split}
\]
Finally, if $s=1/2$, we get
\[
|\hat{u}(x,t)|\gtrsim  \int_{1/2}^t \frac{\dd \tau}{t-\tau}=+\infty.
\]
 Of course this is a contradiction with the fact that $u$ is bounded.
\end{proof}

\section{Numerical evidence on the existence of a mushy region}\label{sec.numer}

Since we have established a good numerical framework for the fractional Stefan problem, the form of the selfsimilar solutions of both the one-phase and the two-phase problems have been verified numerically.
In particular, in examining the two-phase problem we have proved partial results of the existence of a mushy region, it is theoretically established only  in the case $s=1/2$ and the numerics agrees. Moreover, the existence of such a mushy zone is also observed for other values of $s\in (0,1)$ in view of the numerical simulations  presented in Figure \ref{fig:ExistenceMussy}, which are very clear in the case when $P_2$ is very close to $0$.

\begin{figure}[h!]
\advance\leftskip-2.1cm
\includegraphics[width=1.23\textwidth]{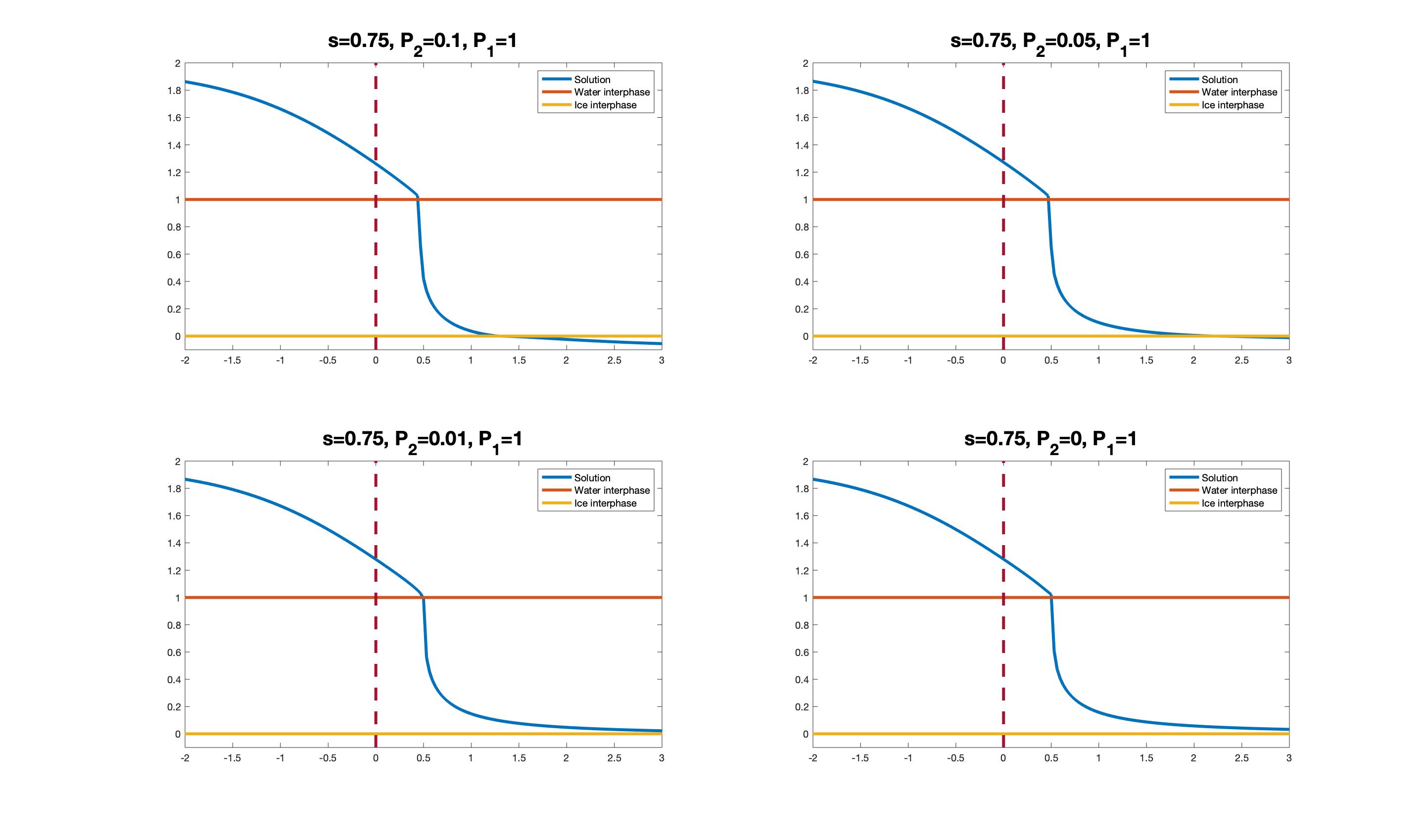}
\vspace{-1.2cm}
\caption{Existence of mushy regions for $s>1/2$ and $P_2$ small.}
\label{fig:ExistenceMussy}
\end{figure}

This is consistent with the results ensuring existence of a mushy region obtained in the one-phase problem, and the $L^1_{\textup{loc}}$ continuous dependence result on the initial data as $P_2\to0$.

However, the numerical simulations are not conclusive for $P_2$ close to $P_1$ and $s>1/2$, as we can see in Figure \ref{fig:NoNExistenceMussy}. In the figures the enthalpy is displayed and $L=1$.

\begin{figure}[h!]
\advance\leftskip-2.1cm
\includegraphics[width=1.23\textwidth]{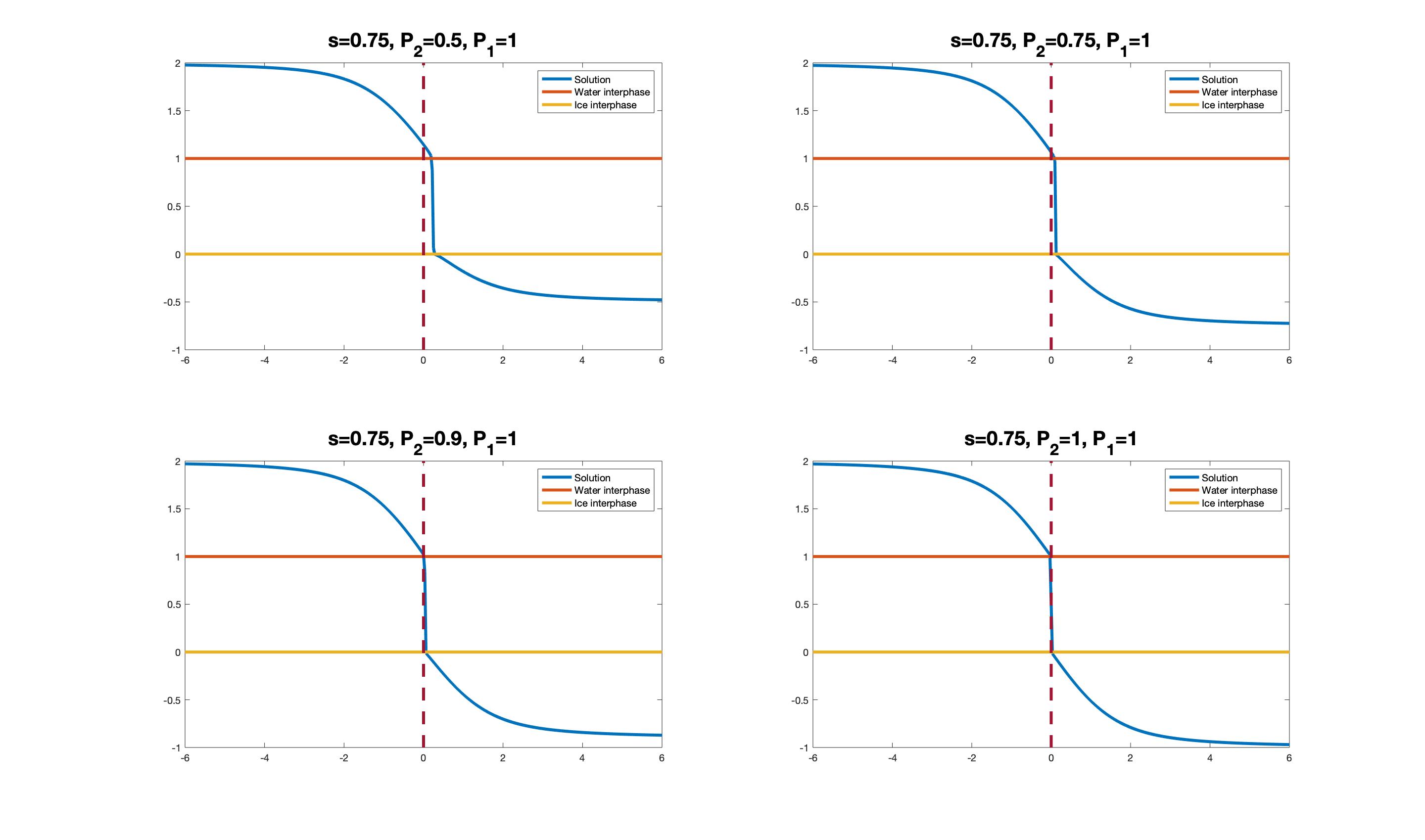}
\vspace{-1.2cm}
\caption{Solutions for $s>1/2$ and $P_2$ close to $P_1$.}
\label{fig:NoNExistenceMussy}
\end{figure}

\section{Results on the speed of propagation}\label{sec.speed}

In the one-phase Stefan problem, knowing the precise behaviour of the selfsimilar solution is enough to conclude that the temperature has finite speed of propagation (see Theorem \ref{coro:NFiniteSpeed2}). However, in the two-phase problem an analogous result is simply not true.

In this section we present a series of partial results regarding the speed of propagation of the temperature, as well as numerical simulations. They exhibit interesting and very different behaviour.

\begin{proposition}[Control through one-phase selfsimilar solutions]
\label{prop:ControlThrough1-phase}
Let $h\in L^\infty(Q_T)$ be the very weak solution of \eqref{eq:2-phaseStef} with $h_0\in L^\infty(\R^N)$ as initial data and $u:=\Phi_2(h)$. If $\supp\{\Phi_2(h_0+\veps)_+\}\subset B_R(x_0)$ for some $\veps>0$, $R>0$, and $x_0\in \R^N$, then $\supp u(\cdot,t)_+\subset B_{R+\xi_0t^{\frac{1}{2s}}}(x_0)$ for some $\xi_0>0$ and all $t\in(0,T)$.
\end{proposition}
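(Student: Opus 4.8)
The plan is to reduce the assertion to the one-phase finite speed of propagation result, Theorem \ref{coro:NFiniteSpeed2}, by means of the one-sided comparison with one-phase solutions established in Proposition \ref{prop:1-phaseBounds2-phase}. The key algebraic observation is that the positive part of the two-phase temperature is exactly the one-phase nonlinearity. Indeed, since $\Phi_2(h)=\max\{h-L,0\}+\min\{h,0\}$ and $L>0$, at most one of the two summands is nonzero, so that
\[
u_+=\max\{\Phi_2(h),0\}=\max\{h-L,0\}=\Phi_1(h).
\]
In particular $u_+$ only detects the water region $\{h>L\}$, which is precisely the part of the solution that should move with finite speed; the ice region $\{h<0\}$ carrying $u_-$ is the one responsible for instantaneous propagation, and is exactly why no full finite-speed statement can hold in the two-phase case.

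Next I would invoke Proposition \ref{prop:1-phaseBounds2-phase} to produce the upper barrier $h\leq\overline{h}$ in $Q_T$, where $\overline{h}$ solves the one-phase problem \eqref{eq:1-phaseStef} with initial datum $\overline{h}_0=\max\{h_0,0\}$. Since $\Phi_1$ is nondecreasing, this upgrades to the pointwise bound $u_+=\Phi_1(h)\leq\Phi_1(\overline{h})$ a.e. in $Q_T$. Both $u_+$ and $\Phi_1(\overline{h})$ are continuous by Theorem \ref{thm:regularitygen}, so $u(\cdot,t)_+$ vanishes wherever $\Phi_1(\overline{h}(\cdot,t))$ does; hence $\supp u(\cdot,t)_+\subset\supp\{\Phi_1(\overline{h}(\cdot,t))\}$ for every $t\in(0,T)$. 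It therefore suffices to control the support of $\Phi_1(\overline{h})$ through Theorem \ref{coro:NFiniteSpeed2}.

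The only remaining point is to verify the hypothesis of Theorem \ref{coro:NFiniteSpeed2} for $\overline{h}$, namely $\supp\{\Phi_1(\overline{h}_0+\veps')\}\subset B_R(x_0)$ for some $\veps'>0$. The assumption $\supp\{\Phi_2(h_0+\veps)_+\}\subset B_R(x_0)$ reads, after the same computation as above, exactly as $h_0\leq L-\veps$ outside $B_R(x_0)$. Shrinking the parameter to $\veps':=\tfrac12\min\{\veps,L\}\in(0,L)$ (the support hypothesis is monotone in the parameter, so it persists for any smaller value) gives $h_0\leq L-\veps'$ and hence $\max\{h_0,0\}\leq L-\veps'$ outside $B_R(x_0)$, the last step using $L-\veps'>0$. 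Consequently $\Phi_1(\overline{h}_0+\veps')=\max\{\max\{h_0,0\}+\veps'-L,0\}=0$ outside $B_R(x_0)$, which is the required support condition. Applying Theorem \ref{coro:NFiniteSpeed2}\eqref{coro:NFiniteSpeed2-item-a} to $\overline{h}$ yields $\supp\{\Phi_1(\overline{h}(\cdot,t))\}\subset B_{R+\xi_0 t^{1/(2s)}}(x_0)$ for some $\xi_0>0$ and all $t\in(0,T)$, and combining with the containment from the previous paragraph closes the argument.

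I expect no deep obstacle here: the whole proof rests on recognizing the identity $u_+=\Phi_1(h)$ together with the upper one-phase barrier of Proposition \ref{prop:1-phaseBounds2-phase}, after which the conclusion is inherited directly from the one-phase theory. The only genuinely delicate step is the bookkeeping that translates the two-phase support hypothesis on $\Phi_2(h_0+\veps)_+$ into the one-phase hypothesis on $\Phi_1(\overline{h}_0+\veps')$; one must reduce $\veps$ below $L$ so that the constant enthalpy offset does not generate spurious support of $\Phi_1(\overline{h}_0+\veps')$ outside $B_R(x_0)$.
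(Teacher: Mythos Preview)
The paper does not include a proof of this proposition; it is stated and then immediately followed by commentary and figures. Your argument is correct and is precisely the one the paper's architecture is set up to deliver: the upper one-phase barrier $h\le\overline{h}$ from Proposition~\ref{prop:1-phaseBounds2-phase}, combined with the identity $u_+=\Phi_1(h)$ and Theorem~\ref{coro:NFiniteSpeed2}\eqref{coro:NFiniteSpeed2-item-a} applied to $\overline{h}$. The bookkeeping translating $\supp\{\Phi_2(h_0+\veps)_+\}\subset B_R(x_0)$ into $\supp\{\Phi_1(\overline{h}_0+\veps')\}\subset B_R(x_0)$ via $\veps'=\tfrac12\min\{\veps,L\}$ is handled cleanly, and the appeal to continuity (Theorem~\ref{thm:regularitygen}) to pass from the a.e.\ inequality $\Phi_1(h)\le\Phi_1(\overline{h})$ to the support inclusion is the right way to make the final step rigorous.
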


However, we cannot at the same time conclude that $u_-$ is compactly supported. In fact, this is not true as one can see from Figure \ref{fig:FiniteWater_InfiniteVSfiniteIce_InfiniteVSfinitemushy} where it has finite or infinite speed of propagation depending on the initial amount of entalphy above the level $L$ and below the level $0$.

\begin{figure}[h!]
\advance\leftskip-2.1cm
\includegraphics[width=1.23\textwidth]{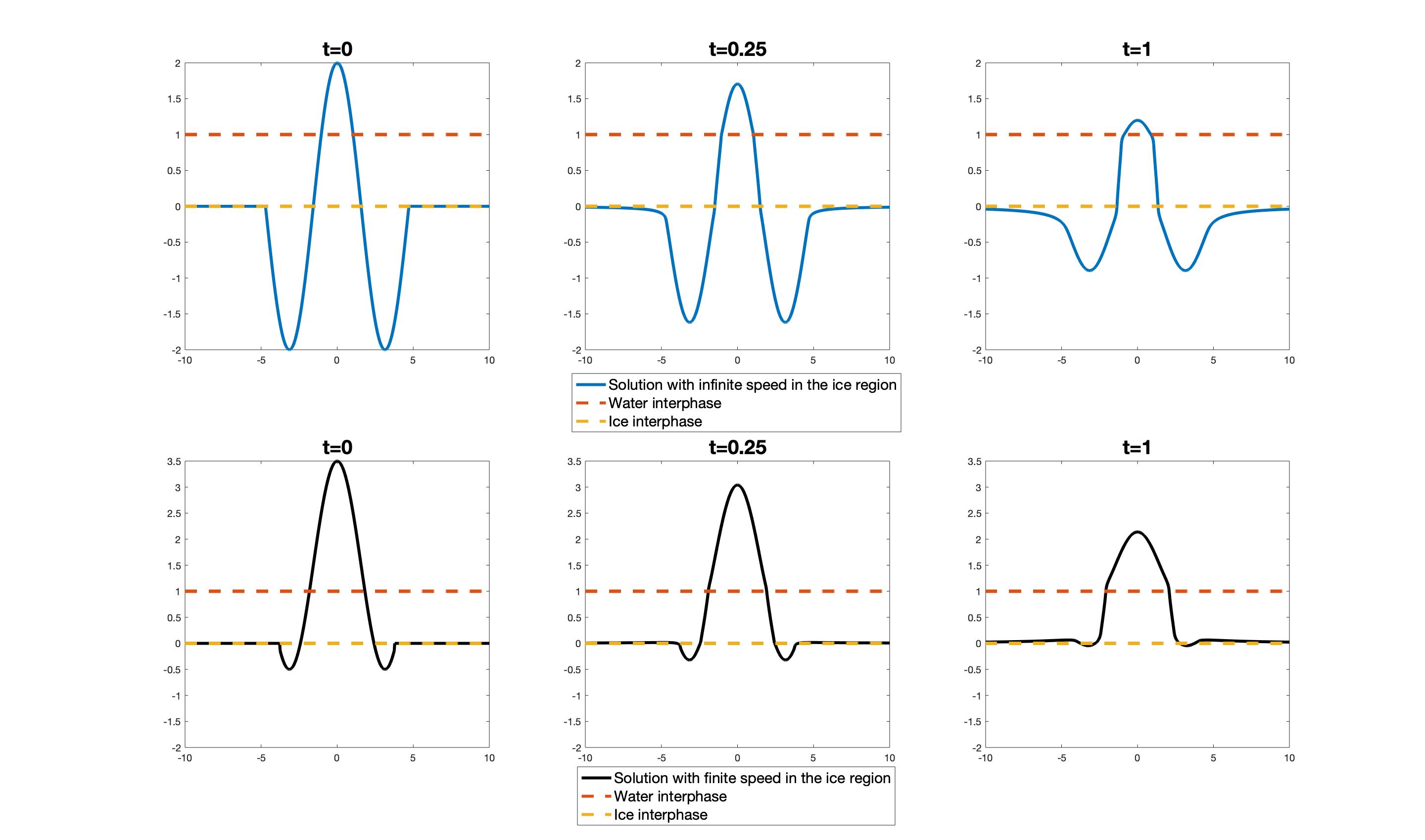}
\vspace{-0.8cm}
\caption{Solutions with finite and infinite speed of propagation. The blue solution has initial condition $(L+1)\cos(x)\mathbf{1}_{|x|<3\pi/2}$, and the black one $(L+1)(\cos(x)+3/4)\mathbf{1}_{|x|<6\pi/5}$. Here $L=1$ and $s=0.25$.}
\label{fig:FiniteWater_InfiniteVSfiniteIce_InfiniteVSfinitemushy}
\end{figure}

\begin{proposition}[Control through two-phase selfsimilar solutions]\label{prop:ControlThrough2-phase}
Let $h\in L^\infty(Q_T)$ be the very weak solution of \eqref{eq:2-phaseStef} with $h_0\in L^\infty(\R^N)$ as initial data and $u:=\Phi_2(h)$. If $h_0\leq-C$ outside $B_R(x_0)$ for some $C>0$, $R>0$, and $x_0\in \R^N$, then:
\begin{enumerate}[{\rm (a)}]
\item  $\supp u(\cdot,t)_+\subset B_{R+\xiw t^{\frac{1}{2s}}}(x_0)$ for some $\xiw>0$ and all $t\in(0,T)$.
\item The mushy region $\{x\in \R^N \, : \, 0\leq h(\cdot,t)\leq L\}\subset B_{R+\xii t^{\frac{1}{2s}}}(x_0)$ for some $\xii\geq\xiw>0$ and all $t\in(0,T)$.
\end{enumerate}
\end{proposition}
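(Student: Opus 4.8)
The plan is to prove both containments at once by a comparison argument, using rotated and translated copies of the two-phase selfsimilar solution constructed under the running assumptions (Theorem~\ref{thm:UniqueInterphasePoints}) as barriers from above. By translation invariance of \eqref{eq:2-phaseStef} I may assume $x_0=0$. First I fix the barrier parameters: set $P_2:=C$ and choose $P_1$ large enough that both $P_1>P_2$ and $L+P_1\geq \|h_0\|_{L^\infty(\RN)}$ hold, which is possible since $h_0\in L^\infty$. Let $H$ be the corresponding one-dimensional selfsimilar profile, with interphase points $0<\xiw\leq \xii<+\infty$ furnished by Theorem~\ref{thm:UniqueInterphasePoints} together with Proposition~\ref{thm:positiveinterphases}; the hypothesis $P_1>P_2$ is essential here to guarantee $\xiw>0$. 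For each unit vector $e\in\RN$ I define the candidate barrier
\[
\bar h_e(x,t):=H\!\left(\frac{x\cdot e-R}{t^{1/(2s)}}\right).
\]
As recalled in the remark following Theorem~\ref{thm:ssprofileprop} (and Section~3.1 of \cite{dTEnVa19}), a one-dimensional selfsimilar solution extends to a genuine selfsimilar solution of \eqref{eq:2-phaseStef} on $\RN$ by keeping it constant in the orthogonal directions; since $\FL$ is rotation- and translation-invariant, $\bar h_e$ is again a very weak solution of \eqref{eq:2-phaseStef}.

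Next I verify the ordering of the initial data. The initial trace of $\bar h_e$ is the step function equal to $L+P_1$ on $\{x\cdot e\leq R\}$ and to $-P_2=-C$ on $\{x\cdot e>R\}$. On $\{x\cdot e\leq R\}$ we have $\bar h_e(\cdot,0)=L+P_1\geq \|h_0\|_{L^\infty(\RN)}\geq h_0$, while on $\{x\cdot e>R\}$ the elementary inclusion $x\cdot e>R\Rightarrow |x|>R$ combined with the hypothesis $h_0\leq -C$ outside $B_R$ gives $\bar h_e(\cdot,0)=-C\geq h_0$. Hence $\bar h_e(\cdot,0)\geq h_0$ a.e., and the comparison principle (Theorem~\ref{thm:genproperties}\eqref{thm:genproperties-item1}) yields $h\leq \bar h_e$ in $Q_T$. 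Reading off the phases of the barrier by monotonicity of $H$ and the definition of $\xiw,\xii$, we get $\bar h_e(x,t)\leq L$ whenever $x\cdot e\geq R+\xiw t^{1/(2s)}$, and $\bar h_e(x,t)<0$ whenever $x\cdot e>R+\xii t^{1/(2s)}$.

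To conclude I exploit that this holds for every direction $e$. Given $x$ with $|x|\geq R+\xiw t^{1/(2s)}$, taking $e:=x/|x|$ gives $x\cdot e=|x|\geq R+\xiw t^{1/(2s)}$, so $h(x,t)\leq \bar h_e(x,t)\leq L$; since $\Phi_2(r)\leq 0$ for $r\leq L$, this forces $u(x,t)\leq 0$, i.e. $u(x,t)_+=0$. As $u=\Phi_2(h)$ is continuous (Theorem~\ref{thm:regularitygen}), this gives $\supp u(\cdot,t)_+\subset B_{R+\xiw t^{1/(2s)}}(x_0)$, which is (a). The identical argument with $\xii$ in place of $\xiw$ shows that $|x|>R+\xii t^{1/(2s)}$ forces $h(x,t)<0$, so such $x$ lies outside the mushy region $\{0\leq h(\cdot,t)\leq L\}$, proving (b) with $\xii\geq\xiw>0$. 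I expect the only genuinely delicate ingredient to be the strict positivity $\xiw>0$ of the barrier's interphase point; but this is precisely the content of Proposition~\ref{thm:positiveinterphases}, so within the present proof the argument reduces to a clean comparison once that nonlocal analysis is available. The remaining points — that the $N$-dimensional ridge extension is a solution, and the geometric inclusion of half-spaces in complements of balls — are routine.
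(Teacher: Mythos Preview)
The paper does not give an explicit proof of Proposition~\ref{prop:ControlThrough2-phase}; it only remarks that the constants $\xiw,\xii$ come from the two-phase selfsimilar construction of Theorems~\ref{thm:UniqueInterphasePoints} and~\ref{thm:mussyregion}, and refers back (implicitly) to the analogous one-phase argument carried out in~\cite{dTEnVa19} for Theorem~\ref{coro:NFiniteSpeed2}. Your proof is exactly the comparison argument the paper has in mind: choose the selfsimilar barrier with $P_2=C$ and $P_1$ large, extend it to $\RN$ as a ridge in each direction $e$, compare initial data on the two half-spaces, and then intersect over all $e$ to pass from half-spaces to balls; the strict positivity $\xiw>0$ is indeed the only nontrivial input and is supplied by Proposition~\ref{thm:positiveinterphases}. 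So your proposal is correct and matches the intended approach.
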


\begin{remark}
\begin{enumerate}[{\rm (a)}]
\item The numbers $\xiw,\xii$ come from the construction of the two-phase selfsimilar solution as in Theorems \ref{thm:UniqueInterphasePoints} and \ref{thm:mussyregion}.
\item Similar conclusions as present in Propositions \ref{prop:ControlThrough1-phase} and \ref{prop:ControlThrough2-phase} can be obtained for $u_-$.
\end{enumerate}
\end{remark}

Figure \ref{fig:FiniteWaterMushy} exhibits the control obtained by the two-phase selfsimilar solution described in e.g. Theorem \ref{thm:UniqueInterphasePoints}.
We see that the water region is first expanding, then contracting, and finally disappearing. Only the expansion was possible in the one-phase Stefan problem (cf. Theorem \ref{thm:conspositivityu}). In the end, all we can say is that the water region is not expanding more than what the two-phase selfsimilar solution allows it to do, and we obtain an upper estimate on the support. Similar results are also shown for the mushy region, but with a possibly bigger radius.

\begin{figure}[h!]
\advance\leftskip-2.1cm
\includegraphics[width=1.23\textwidth]{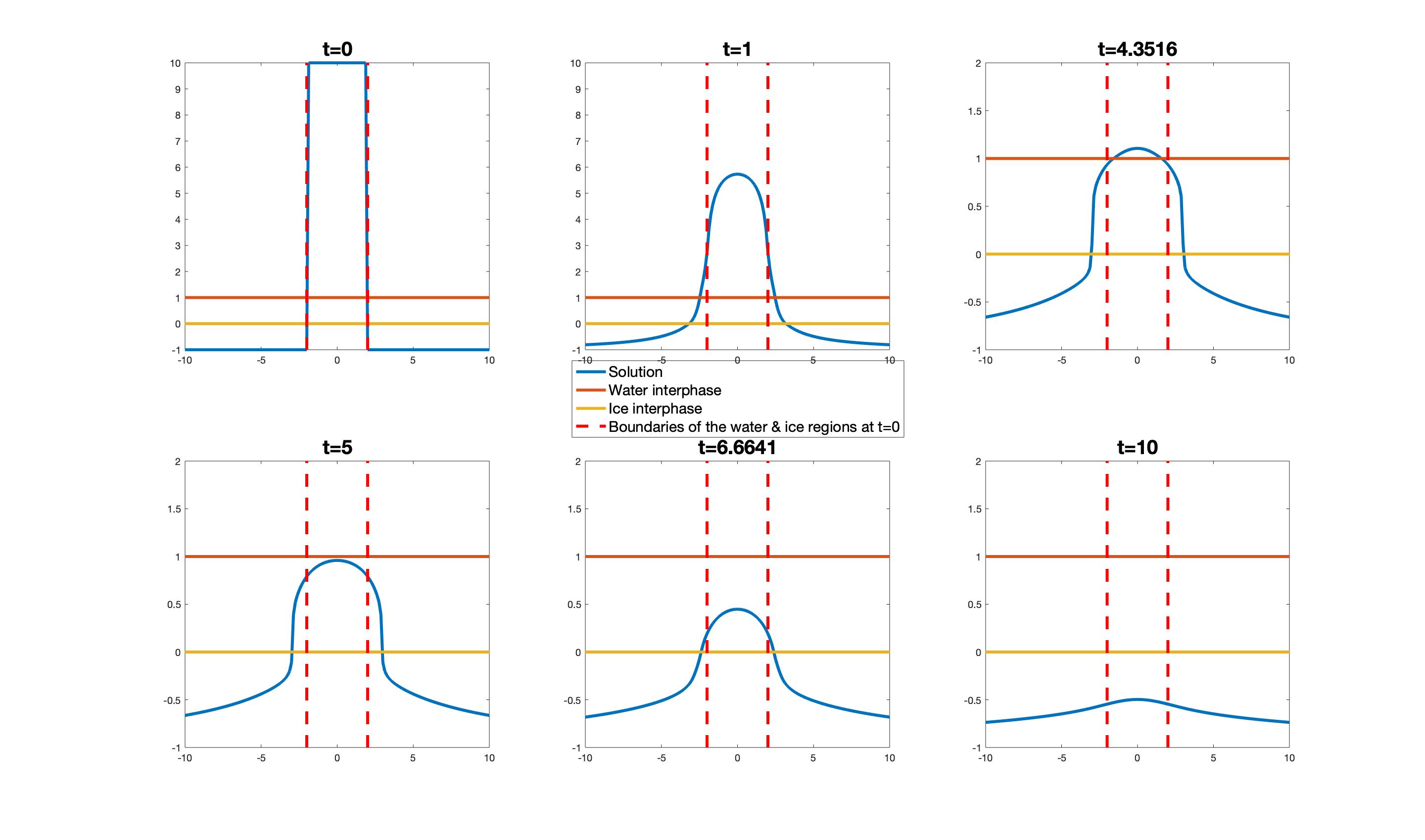}
\vspace{-1.2cm}
\caption{Solution with expanding, contracting, and disappearing water region. Here $L=1$ and $s=0.25$. Note also that the last four pictures have a different scale for convenience.}
\label{fig:FiniteWaterMushy}
\end{figure}

\section{Comments and open problems}

\noindent $\bullet$ The classical Stefan Problem has been thoroughly studied in the last decades
with enormous progress, but still many fine details are under development. There are also plenty of
studies of variants of the equation or systems which appear in physical or technical applications.

\noindent $\bullet$ We have concentrated our interest in presenting our results on the fractional Stefan problem, which are quite recent. A basic theory is ready in the one-phase fractional Stefan problem. The regularity of solutions and free boundaries is still at an elementary stage and needs much further development, even in the one-phase problem.

\noindent $\bullet$ The two-phase fractional Stefan problem with $k_1\ne k_2$ has not been considered here and needs attention because different conductivity in the two phases agree with the practical evidence.

\subsection*{Acknowledgements}
The research of F. del Teso was partially supported by PGC2018-094522-B-I00 from the MICINN of the Spanish Government; J. Endal partially by the Toppforsk (research excellence) project Waves and Nonlinear Phenomena (WaNP), grant no. 250070 from the Research Council of Norway; and J.~L.~V\'azquez partially by grant PGC2018-098440-B-I00 from the MICINN of the Spanish Government. V\'azquez also benefitted from an Honorary Professorship at Univ. Complutense de Madrid.

%%%%%%%%%%%%%%%%%%%%%%%%%%%%%%%%%%%%%%%%%%%%%%%%%%%%%%%%%%%NEW SECTION%%%%%%%%%%%%%%%%%
%%%%%%%%%%%%%%%%%%%%%%%%%%%%%%%%%%%%%%%%%%

\lhead{\emph{References}}
\bibliographystyle{abbrv}

%\bibliography{Bibliography}

\end{document}